\definecolor{blue75}{rgb}{0,0,.75}
\definecolor{green75}{rgb}{0,.75,0}
\newcommand{\parenthezises}[1]{\arabic{#1}}
\begin{document}
\newcommand{\R}{\mathbb{R}}
\newcommand{\N}{\mathbb{N}}
\def\diam{\operatorname{diam}}
\def\dist{\operatorname{dist}}
\def\ess{\operatorname{ess}}
\def\inner{\operatorname{int}}
\def\osc{\operatorname{osc}}
\def\sign{\operatorname{sign}}
\def\supp{\operatorname{supp}}
\newcommand{\BMO}{BMO\left(\Omega\right)}
\newcommand{\LOne}{L^{1}\left(\Omega\right)}
\newcommand{\LTwo}{L^{2}\left(\Omega\right)}
\newcommand{\Lq}{L^{q}\left(\Omega\right)}
\newcommand{\Lp}{L^{p}\left(\Omega\right)}
\newcommand{\LInf}{L^{\infty}\left(\Omega\right)}
\newcommand{\HOneO}{H^{1,0}\left(\Omega\right)}
\newcommand{\HTwoO}{H^{2,0}\left(\Omega\right)}
\newcommand{\HOne}{H^{1}\left(\Omega\right)}
\newcommand{\HTwo}{H^{2}\left(\Omega\right)}
\newcommand{\HmOne}{H^{-1}\left(\Omega\right)}
\newcommand{\HmTwo}{H^{-2}\left(\Omega\right)}

\newtheorem{Theorem}{Theorem}[section]
\newtheorem{Assumption}[Theorem]{Assumptions}
\newtheorem{borollary}[Theorem]{borollary}
\newtheorem{bonvention}[Theorem]{bonvention}
\newtheorem{Definition}[Theorem]{Definition}
\newtheorem{Example}[Theorem]{Example}
\newtheorem{Lemma}[Theorem]{Lemma}
\newtheorem{Notation}[Theorem]{Notation}
\newtheorem{Remark}[Theorem]{Remark}
\numberwithin{equation}{section}

\title{Global existence for a degenerate  haptotaxis model of cancer invasion}
\author{Anna~Zhigun, Christina~Surulescu, and Aydar Uatay}
\renewcommand\Affilfont{\itshape\small}
\affil{Technische Universität Kaiserslautern, Felix-Klein-Zentrum für Mathematik\\ Paul-Ehrlich-Str. 31, 67663 Kaiserslautern, Germany\\
  e-mail: {\{zhigun,surulescu,uatay\}@mathematik.uni-kl.de}}
\date{}
\maketitle
\begin{abstract}
We propose and study a strongly coupled PDE-ODE system with tissue-dependent degenerate diffusion and haptotaxis that can serve as a model prototype for  cancer cell invasion through the 
extracellular matrix. We prove the global existence of weak solutions and  illustrate the model behaviour by numerical simulations for a two-dimensional setting.\\\\
{\bf Keywords}: cancer cell invasion; degenerate diffusion; global existence; haptotaxis; parabolic system; weak solution.\\
MSC 2010:
35B45, 
35D30, 
35K20, 
35K51, 
35K57, 
35K59, 
35K65, 
35Q92,  
92C17. 
\end{abstract}

\section{Introduction}\label{intro}

Cancer cell migration is an essential step in the development and expansion of a tumor and its metastases. Thereby, diffusion and taxis 
are two of the main vehicles of cancer cell motility. The term 'taxis' characterizes the movement in the direction of the gradient of some signal available in the peritumoral region 
and --depending on the nature of the stimulus-- refers to chemotaxis (directed cell motion in response to a chemical concentration gradient), haptotaxis (motion follows the 
gradient of the density of tissue fibers), pH-taxis (direction of motion dictated by a pH gradient) etc. While 
 chemotaxis gradients may lack in the solution, haptotaxis seems to be indispensable, as the cells need to adhere to the ECM in order to support their motion \cite{adams02}, but also for 
 information exchange with their surroundings, the latter being closely related to survival and proliferation \cite{legate-etal,schwartz_assoian01}, see also \cite{pickup} for a comprehensive 
 review. For these reasons we focus here on diffusion and haptotaxis. The latter is triggered 
 by an unsoluble stimulus: the fibers of the extracellular matrix (ECM) and their density\footnote{and orientation, but this is not the case in the present model type}.\\[1ex]
\noindent
Biological experiments suggest that:
\begin{itemize}
  \item[(i)] 
  enhanced interactions with the surrounding tissue favor cell motility (in particular, diffusivity) \cite{friedl-wolf};
  \item[(ii)] in those areas where the cells and the ECM are tightly packed, the diffusivity and the advective effects of haptotaxis (and hence also the invasion into the tissue) 
  are limited \cite{lu};
 \item[(iii)] no cell migration (in particular no diffusivity) occurs in regions where the tissue is absent (see above);
  \item[(iv)] cells  propagate through the ECM with a finite speed.
\end{itemize}
 \noindent
 As mentioned already, haptotaxis is connected to directioning the motion along the gradient of an immovable stimulus (density of tissue fibers). Therefore, the evolution of the 
 latter is characterized by way of an ODE. Since it contains no spatial diffusion, this ODE corresponds to an everywhere degenerate reaction-diffusion PDE and has 
 no regularising effect. When strongly coupled to a PDE for the cell density via a haptotactic transport term, this causes a considerable difficulty for the analysis. \\ 

\noindent
Previous models for cell migration involving haptotaxis and operating on the macroscopic scale of population densities have been proposed\footnote{we only consider here 
pure continuum models and omit both discrete and hybrid settings} e.g., in \cite{anderson-etal_2000,chaplain-anderson-03,JMGL06} upon relying on equilibrium of fluxes (diffusion and 
haptotaxis, possibly with some other kinds of taxis as well). The mathematical analysis of this model class was most often performed in the case with linear diffusion for 
the tumor cell density, see e.g., \cite{marciniak-ptashnyk,tao-09,walker-webb-07} and only recently approached for settings allowing for nonlinear diffusion \cite{tao-winkler-11,
wang16}. Still, in the pure macroscopic framework, nonlocal models including cell-cell and cell-tissue interactions within a sensing radius by way of integral terms 
have been proposed and simulated \cite{armstrong-sherratt-06,gerisch-chaplain-08,painter-etal-10,sherratt-etal-09}. The mathematical analysis (well posedness of 
classical solutions) of a couple of models in that class, 
however in some simplified settings --yet with linear diffusion-- has been done in \cite{clsw-11,smlc-09}. \\

\noindent
Multiscale models for cancer cell migration involving haptotaxis and 
coupling subcellular dynamics (microlevel) with population dynamics (macroscale) have been recently proposed and investigated with respect to well posedness in \cite{mss-15,SSW}, 
also considering nonlinear diffusion for the tumor cell density. A further micro-macro model for acid-mediated tumor invasion through tissue and allowing for gap formation 
at the tumor interface was proposed in \cite{hiremath-surulescu-15} and its global well-posedness shown; the model also accounts for stochastic effects, nonlinear diffusion, and 
repellent taxis. Yet another multiscale model class for tumor invasion with (chemo- and) haptotaxis is that considered and analyzed in \cite{kelkel-surulescu,lorenz-surulescu}. 
Those models couple subcellular dynamics (ODEs) with mesoscopic kinetic transport equations describing individual cell behavior and the evolution of tissue fiber density, 
and with the macrolevel dynamics of a chemoattractant concentration. Those models are able to account for nonlinear diffusion, meaning that the diffusion coefficient 
in the equation for cell density is allowed to depend on the solution itself. \\

\noindent
In the quasilinear system handled in \cite{SSW} the diffusion coefficient of the cell density 
depends, moreover, upon the local interaction between the tumor cells and the ECM fibers. However, that coefficient was still assumed to be nondegenerate (at least as long as the solutions remain bounded), and thus the model did 
not capture the features (iii) and (iv) listed 
above. In order to account for all properties (i)-(iv) we develop in this paper a degenerate-diffusion system, thereby keeping only two components: the density of the 
tumor cells and the density of the tissue fibers, hence studying a model with diffusion and haptotaxis only. As we focus here on the degeneracy issue in the framework of a 
haptotaxis model we ignore the multiscality and stay on the population level. For this new prototype model we prove the global existence of weak solutions. The uniqueness and 
boundedness of solutions remain open.\\

\noindent
This paper is organised as follows: In {\it Section  \ref{model}} we set up the mathematical model, followed by fixing some notations in {\it Section \ref{not}} and by the statement of the 
problem and the main result in {\it Section \ref{problem}}. The subsequent {\it Sections \ref{aproxi}}, {\it \ref{apriori}}, and {\it \ref{existence}} are dedicated to the proof of this result, by 
constructing sequences of nondegenerate approximations to the original problems, proving some apriori estimates for these approximations, and passing to the limits in the 
approximations, 
respectively. 
%
%
In {\it Section~\ref{numerics}}, we illustrate the possible model behaviour by performing numerical 
simulations in 
the two-dimensional case and compare the  results with those reproduced for a previous model with nondegenerate diffusion of the tumor cells. 
Finally, we provide in {\it Section \ref{discuss}} a short discussion of the obtained results and set them in context with respect to other models with degenerate diffusion. 
The paper includes an {\it Appendix} with two auxiliary results ({\it Lemmata~\ref{LemA1}} 
and {\it~\ref{LemA2}}) dealing with weak and almost everywhere convergence and being of independent interest. 
\section{The model}\label{model}
In this section we introduce an ODE-PDE system for two variables: the cancer cells  density 
$c$ and the density $v$ of ECM tissue fibers, both depending on time and position on a smooth bounded domain $\Omega \subset \R^N$.
Our system, a variant of the one introduced in \cite{SSW}, has the following form:
\begin{subequations}\label{hapto}
\begin{alignat}{3}
 &\partial_t c=\nabla\cdot\left(\frac{\kappa_c vc}{1+vc}\nabla c-\frac{\kappa_v c}{(1+v)^2}\nabla v\right)+\mu_cc(1-c-\eta v)&&\text{ in }\R^+\times\Omega,\label{c}\\
 &\partial_t v=\mu_vv(1-v)-\lambda vc&&\text{ in }\R^+\times\Omega,\label{v}\\
 &\frac{\kappa_c vc}{1+vc}\partial_{\nu} c-\frac{\kappa_v c}{(1+v)^2}\partial_{\nu} v=0&&\text{ in }\R^+\times\partial\Omega,\label{bc}\\
 &c(0)=c_0,\ v(0)=v_0 &&\text{ in }\Omega,
\end{alignat}
\end{subequations}
where $\kappa_c, \kappa_v, \mu_c, \eta, \mu_v, \lambda$ are some positive constants. System \eqref{hapto} consists of a degenerate parabolic PDE which describes 
the evolution of the tumor cell density and an ODE for the evolution of the tissue density, supplemented by the initial and the 'no-flux' boundary conditions. The latter 
is realistic, since the cancer cells do not leave the tissue hosting the original tumor. \\
\noindent
Equation \eqref{c} for the tumor cell density includes two nonlinear spatial
movement effects: degenerate diffusion and  haptotaxis transport. \\
\noindent	      
The nonlinear diffusion coefficient in \eqref{c} is taken to be of the form $$D_c(v,c):=\frac{\kappa_c vc}{1+vc},$$ where the positive constant $\kappa_c$ accounts for the 
adhesivity between the tumor cells and the fiber. Notice that the source of degeneracy is twofold: the diffusion coefficient can become zero when $c=0$, but also 
when $v=0$. Our choice of the diffusion coefficient is less restrictive than previous settings which involve some powers of the solution, chosen in such a way as to render the 
mathematical analysis more amenable. Instead, the choice of our degenerate diffusion coefficient is motivated by the biological phenomenon under consideration, more precisely by 
the four properties of the cell spreading which we proposed in 
 {\it Section \ref{intro}}. 
Indeed, let us consider the product $cv$ as a measure of interaction between the cells and the tissue. Then, we have that:
\begin{itemize}
 \item $D_c$ is monotonically increasing in $vc$;
 \item $\underset{vc\rightarrow\infty}{\lim}D_c(v,c)=\kappa_c<\infty$; 
 \item $D_c(0,c)=0$ for all $c\in\R_0^+$;
 \item $D_c(v,c)\underset{c\rightarrow0}{\approx}
\kappa_c vc$ for all $v\in\R_0^+$. 
\end{itemize}
The first three properties above clearly correspond to (i)-(iii) from the introduction. As for the last property, the porous-medium type degeneracy with respect to 
variable $c$ is known to ensure a finite speed of propagation, which provides the condition (iv) in {\it Section \ref{intro}}. It seems that \cite{Kawasaki1997} was the 
first paper involving a diffusion coefficient of the form $\kappa_c vc$, there in the context of bacterial biofilm dispersal.\\
\noindent
The signal-dependent haptotactic sensitivity function 
\begin{equation}\label{hapto-sensit}
 \chi(v):=\frac{\kappa_v}{(1+v)^2}
\end{equation}
is obtained upon accounting for receptor binding to ECM fibers. Here, however, we avoid including specific subcellular dynamics and simplify the setting by looking directly 
at cell-tissue interactions instead of receptor-ligand bindings. Indeed, consider the 'mass action kinetics'\footnote{for simplicity, on this level we only take into 
account conservative interactions (no decay, no proliferation); here $[cv]$ denotes the amount of cells bound to the tissue}
\begin{equation*}
 c+v\underset{k^-} {\stackrel{k^+}\rightleftharpoons }[cv],
\end{equation*}
leading to the ODE system 
\begin{align*}
\partial _tc&=k^-[cv]-k^+cv \\
\partial _tv&=k^-[cv]-k^+cv \\
\partial _t[cv]&=k^+cv-k^-[cv].
\end{align*}
As the binding kinetics is very fast, we may assume that the corresponding steady-state is quickly achieved, hence from the last equation above we obtain 
\begin{equation}\label{rez-c}
[cv]=\frac{k^+}{k^-}cv.
\end{equation}
Furthermore, we assume the total amount of cells is conserved during this short time span, hence 
$$c+[cv]=\text{const}.$$
This leads to $c=\text{const}-[cv]$ and plugging into \eqref{rez-c} and using the notation $\kappa :=\frac{k^+}{k^-}$ we get 
$$[cv]=\frac{\text{const}\cdot v}{\kappa +v}.$$
The haptotaxis equation is obtained by equilibrium of fluxes, but it can also be deduced from a master equation written with the aid of the corresponding probabilities (rates) 
$T_i^{\pm}$ 
of transition from a position $i$  into the adjacent one $i-1$ or $i+1$, respectively. With the gradient-based choice (see e.g., \cite{othmer-stevens-97})
$$T_i^{\pm}:=\alpha +\beta (\tau (v_i)-\tau (v_{i\pm 1})),\quad \alpha >0,\ \beta \ge 0,$$
where $\tau $ is a known differentiable function\footnote{satisfying $\tau '(v)>0$ in the case with positive haptotaxis}, we get for the haptotaxis coefficient the form $\chi (v)c$, with $\chi (v)=2\beta \tau '(v)$. As $\tau (v)$ characterizes the 
(chemical) mechanism of measuring tissue densities, we can interpret $\tau (v)$ as giving the amount of cell surface receptors bound to the tissue fibers, or --even 
further simplified, to avoid introducing the subcellular scale explicitly-- the amount of cells bound to ECM fibers, hence $\tau (v)=[cv]= \frac{\text{const}\cdot v}{\kappa +v}$. This leads to 
$$\chi (v)=2\beta \frac{\text{const}\cdot \kappa }{(\kappa +v)^2},$$
which is of the form \eqref{hapto-sensit} announced above.\\
\noindent
The equation \eqref{v} for the tissue density $v$ is an ODE. It contains no spatial movement effects since the ECM fibers do not move on their own. They can be deformed, at most, 
but we ignore here such deformations.

\section{Basic notation and functional spaces}\label{not}
Partial derivatives, in both classical and distributional sense, with respect to variables  $t$ and $x_i$, will be denoted respectively by $\partial_t$ 
and $\partial_{x_i}$. Further, $\nabla$, $\nabla\cdot$ and $\Delta$ stand for the spatial gradient, divergence and Laplace operators, respectively. $\partial_{\nu}$ is 
the derivative with respect to the outward unit normal of $\partial\Omega$.

\noindent
We assume the reader to be familiar with the standard $L^p$, Sobolev, and H\"older spaces and their usual properties, as well as with the  more general $L^p$ spaces of functions with values in general Banach spaces and with anisotropic Sobolev spaces. In particular, we need the space
\begin{align}
 W^{-1,1}(\Omega):=\left\{u\in D'(\Omega)|\ u=u_0+\sum_{k=1}^N\partial_{x_i}u_i\text{ for some }u_i\in\LOne,\ i=1,\dots,N\right\}.\nonumber
\end{align}
\noindent
For $p\in[1,\infty]\backslash\left\{2\right\}$, we write $||\cdot||_p$ in place of the $||\cdot||_{\Lp}$-norm. Throughout the paper, $||\cdot||$ stands for 
the $||\cdot||_{\LTwo}$-norm and $(u,v)$ for $\int_{\Omega}u(x)v(x)\,dx$, while $\left<\cdot,\cdot\right>$ is reserved for the duality pairing between 
$W^{1,\infty}(\Omega)$ and its dual $(W^{1,\infty}(\Omega))'$. 

\noindent
We denote the Lebesgue measure of a set $A$ by $|A|$ and by $\inner A$ its interior.

\noindent
Finally, we make the following useful convention: For all indices $i$, the quantity $C_i$ denotes a non-negative constant or, alternatively, a non-negative function, 
which is non-decreasing in each of its arguments.
\section{Problem setting and main result}\label{problem}
In this section we propose a definition of weak solutions to system \eqref{hapto} and state our main result under the following assumptions:
\begin{Assumption}[Initial data]\label{ini}~
\begin{enumerate} 
 \item 
  $c_0\geq0,\ c_0\notequiv 0, \ c_0\ln c_0\in\LOne$;
  \item
  $0\leq v_0\leq1,\ v_0\notequiv 0,1,\ v_0^{\frac{1}{2}}\in \HOne$.
\end{enumerate}
\end{Assumption}

\noindent
 The major challenge of model \eqref{hapto} lies in the fact that the diffusion coefficient in equation \eqref{c} degenerates 
 at $c=0$ and, moreover, at $v=0$. The latter seems to make it impossible to obtain an a priori estimate for the gradient of  $\varphi(c)$ in some 
 Lebesgue space for any  smooth, strictly increasing function  $\varphi$.  As a workaround, we are forced to consider an auxiliary function $\ln\left(1+v^{\frac{1}{2}}c\right)$ 
 involving {\it both} $c$ and $v$ and whose gradient we are able to estimate.\\
\noindent
This leads us to the following definition of weak solutions to  \eqref{hapto}: 
\begin{Definition}[Weak solution]\label{Defweak}
Let $c_0,v_0$ satisfy {\it Assumptions~\ref{ini}}.
We call a pair of functions $c:\R^+_0\times\overline{\Omega}\rightarrow\R^+_0$, $v:\R^+_0\times\overline{\Omega}\rightarrow[0,1]$ a global weak solution of
 \eqref{hapto} if for all $0<T<\infty$ it holds that
\begin{enumerate}
  \item $c\in L^2(0,T;\LTwo)$, $\partial_t c\in L^1(0,T;(W^{1,\infty}(\Omega))')$;
  \item $v^{\frac{1}{2}}\in L^{\infty}(0,T;\HOne)$, $\partial_t v^{\frac{1}{2}}\in L^2(0,T;\LTwo)$;
  \item $\ln\left(1+v^{\frac{1}{2}}c\right)\in L^{\frac{4}{3}}(0,T;W^{1,\frac{4}{3}}(\Omega))$, $\frac{ v^{\frac{1}{2}}c}{1+vc}\left(\left(1+v^{\frac{1}{2}}c\right)\nabla \ln\left(1+v^{\frac{1}{2}}c\right)-c\nabla v^{\frac{1}{2}}\right)\in L^1(0,T;\LOne)$;
  \item $(c,v)$ satisfies equation \eqref{c} and the boundary condition \eqref{bc} in the following weak sense: 
  \begin{align}
   \left<\partial_t c,\varphi\right>=&-\left(\frac{\kappa_c v^{\frac{1}{2}}c}{1+vc}\left(\left(1+v^{\frac{1}{2}}c\right)\nabla \ln\left(1+v^{\frac{1}{2}}c\right)-c\nabla v^{\frac{1}{2}}\right)-\frac{\kappa_v c}{(1+v)^2}\nabla v,\nabla\varphi\right)\nonumber\\
   &+(\mu_cc(1-c-\eta v),\varphi)\text{ a.e.  in }(0,T)\text{ for all }\varphi\in W^{1,\infty}(\Omega);\nonumber
  \end{align}
  \item $(c,v)$ satisfies equation \eqref{v} a.e. in $(0,T)\times\Omega$;
  \item $c(0)=c_0$, $v(0)=v_0$.
 \end{enumerate}
\end{Definition}
\begin{Remark}[Diffusion term]
 If $c\in L^1(\tau,T;W^{1,1}(O))$ for some numbers $0\leq\tau<T$ and  open set $O\subset\Omega$, then due to the weak chain and product rules it holds that
 \begin{align}
  \frac{\kappa_cv^{\frac{1}{2}}c}{1+vc}\left(\left(1+v^{\frac{1}{2}}c\right)\nabla \ln\left(1+v^{\frac{1}{2}}c\right)-c\nabla v^{\frac{1}{2}}\right)=\frac{\kappa_c vc}{1+vc}\nabla c\text{ in }L^1(\tau,T;L^1(O)).\nonumber
 \end{align}
\end{Remark}
\begin{Remark}[Initial conditions]
Since we are looking for solutions $(c,v)$ with \begin{align*}
&c\in W^{1,1}((0,T);W^{-1,1}(\Omega)),\\
&v^{\frac{1}{2}}\in H^1(0,T;\LTwo),                                                 \end{align*}
we have 
\begin{align*}
&c\in C([0,T];W^{-1,1}(\Omega)),\\
&v^{\frac{1}{2}}\in C([0,T];\LTwo).                                                 \end{align*}
Therefore, the initial conditions 6. in Definition \ref{Defweak} do make sense.
\end{Remark}
\noindent
Our main result reads:
\begin{Theorem}[Global existence]\label{maintheo}
Let $\Omega\subset\R^N$, $N\in\N$, be a smooth bounded domain and let $\kappa, \mu_c, \eta, \mu_v, \lambda$ be positive constants. 
 Then, for each pair of functions $(c_0,v_0)$ satisfying {\it Assumptions~\ref{ini}}
 there exists a global weak solution $(c,v)$ (in terms of {\it Definition~\ref{Defweak}})  to the system \eqref{hapto}.
\end{Theorem}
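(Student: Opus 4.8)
The plan is to carry out the nondegenerate-approximation programme announced in the introduction: regularise the degenerate flux, solve the resulting uniformly parabolic problems, establish a priori bounds uniform in the regularisation parameter, and pass to the limit.

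First I would introduce approximate problems by replacing the degenerate diffusion coefficient $\frac{\kappa_c vc}{1+vc}$ with a strictly positive $\frac{\kappa_c vc}{1+vc}+\varepsilon$, and by smoothing and lifting the data to $c_0^\varepsilon,v_0^\varepsilon$ that are smooth and bounded away from $0$. For each fixed $\varepsilon>0$ the cell equation is then uniformly parabolic, and the coupled PDE--ODE system admits a sufficiently regular solution $(c_\varepsilon,v_\varepsilon)$; I would obtain this by a fixed-point argument, treating $v_\varepsilon$ via the explicit solution of the ODE \eqref{v} as a functional of $c_\varepsilon$ and invoking standard quasilinear parabolic theory for the $c$-equation, together with $c_\varepsilon\geq0$ and $0\leq v_\varepsilon\leq1$ from the maximum principle and the sign structure of the reaction terms.

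The technical core is the derivation of bounds that are \emph{uniform in} $\varepsilon$. From \eqref{v} and $v_0^{1/2}\in\HOne$ I expect to control $v_\varepsilon^{1/2}$ in $L^\infty(0,T;\HOne)$ and $\partial_tv_\varepsilon^{1/2}$ in $L^2$ directly. For the cell density, testing with $1+\ln c_\varepsilon$ and using $c_0\ln c_0\in\LOne$ yields an $L\log L$ entropy bound, but the associated dissipation $\int\frac{\kappa_c v_\varepsilon}{1+v_\varepsilon c_\varepsilon}|\nabla c_\varepsilon|^2$ degenerates as $v_\varepsilon\to0$, so no bound on $\nabla\varphi(c)$ alone is available. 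This is precisely why the combined variable is introduced: writing $\nabla\ln(1+v_\varepsilon^{1/2}c_\varepsilon)=\frac{v_\varepsilon^{1/2}\nabla c_\varepsilon+c_\varepsilon\nabla v_\varepsilon^{1/2}}{1+v_\varepsilon^{1/2}c_\varepsilon}$, the entropy dissipation controls the first summand in $L^2$ (using $(1+v_\varepsilon^{1/2}c_\varepsilon)^2\geq1+v_\varepsilon c_\varepsilon$), while the second is handled by the $L^2$-bound on $\nabla v_\varepsilon^{1/2}$; together with an integrability estimate for $c_\varepsilon$ this should yield $\ln(1+v_\varepsilon^{1/2}c_\varepsilon)$ bounded in $L^{4/3}(0,T;W^{1,4/3}(\Omega))$, matching item~3 of Definition~\ref{Defweak}, and a bound on $\partial_tc_\varepsilon$ in $L^1(0,T;(W^{1,\infty}(\Omega))')$ then follows by testing the equation.

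Finally, with these bounds I would extract, via Banach--Alaoglu and Aubin--Lions-type compactness, subsequences converging weakly in the energy spaces and almost everywhere; the appendix \emph{Lemmata~\ref{LemA1}} and \emph{\ref{LemA2}} on weak and a.e.\ convergence would serve to identify the nonlinear limits. The main obstacle is this limit passage in the strongly coupled degenerate flux: because \eqref{v} has no spatial smoothing, compactness of $v_\varepsilon$ must come solely from the space--time $H^1$ bounds on $v_\varepsilon^{1/2}$, and one must then reconcile the weak limit of $\nabla\ln(1+v_\varepsilon^{1/2}c_\varepsilon)$ with the weak limits of the coefficients $\frac{v_\varepsilon^{1/2}c_\varepsilon}{1+v_\varepsilon c_\varepsilon}$ and $c_\varepsilon\nabla v_\varepsilon^{1/2}$ so as to recover exactly the weak formulation of Definition~\ref{Defweak} and not merely an inequality or a defect term.
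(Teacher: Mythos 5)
Your skeleton coincides with the paper's (nondegenerate approximation, entropy estimate coupled to the differentiated $v$-equation, the auxiliary function $\ln(1+v^{1/2}c)$, Banach--Alaoglu/Aubin--Lions compactness, and the appendix lemmata for identifying nonlinear limits), and your treatment of the flux is essentially the paper's: {\it Lemma~\ref{LemA2}} identifies the limit of $\frac{vc}{1+vc}\nabla c$ on $\{v>0\}$, while a factorization through $v^{3/4}$ kills it on $\{v=0\}$. But you have a genuine gap, and it is not where you located it. The hard identification problem is the \emph{quadratic reaction term}, not the flux. Once the artificial diffusion is removed there is no gradient bound on $c_\varepsilon$ itself, only on $u_\varepsilon=\ln\left(1+v_\varepsilon^{1/2}c_\varepsilon\right)$; this yields a.e.\ convergence of $c_\varepsilon$ only on $\{v>0\}$. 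On $\{v=0\}$ you merely know $c_\varepsilon\rightharpoonup c$ in $L^2$ and $c_\varepsilon^2\rightharpoonup \tilde{c}^2$ in $L^1$ (uniform integrability from the $L\log L$ bound on $c_\varepsilon^2$), and nothing in your outline forces $\tilde{c}=c$ there: your limit equation would contain $-\mu_c\tilde{c}^2$ in place of $-\mu_c c^2$ --- exactly the kind of defect term you promised to exclude, but sitting in the reaction rather than the flux. The paper devotes a whole subsection to closing this: it proves that zero sets of the ODE \eqref{v} are time-invariant (divide by $v$ and integrate), so the degenerate region is the cylinder over $\{v_0=0\}$; on the cylinder over $\inner\{v_{\epsilon_{4}0}=0\}$ both taxis and degenerate diffusion vanish identically, the approximate equation reduces to $\partial_t c=\epsilon_{2}\Delta c+\mu_c c-\mu_c c^2$, an interior $L^{4/3}$ gradient estimate (obtained by testing the differentiated equation with $\frac43\varphi^2|\partial_{x_i}c|^{-2/3}\partial_{x_i}c$) gives strong compactness there, and after letting $\epsilon_{2}\to0$ one is left with the logistic ODE, whose continuous dependence on initial data identifies $c=\tilde{c}=\bar{c}$ a.e.\ on $(0,T)\times\{v_0=0\}$.

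This repair also shows that one of your preparatory steps is actively harmful: you propose lifting the initial data so that $v_0^\varepsilon$ is \emph{bounded away from zero}, whereas the paper's argument requires the opposite --- a Lusin-type construction of $v_{\epsilon_{4}0}$ whose zero set essentially \emph{contains} $\{v_0=0\}$, quantified by \eqref{star}, since $\{v_0=0\}$ may have positive measure under {\it Assumptions~\ref{ini}} and the identification above lives precisely on that set. Two secondary points. First, the paper does not solve the ODE exactly in the approximation: it adds $\epsilon_{1}\Delta\psi(v_\epsilon)$ with $\psi(v)=\arctan\left(v^{1/2}\right)$, plus a damping term $-\epsilon_{1}c_\epsilon^{\theta}$, $\theta>N+2$, so that Amann's theory together with maximal regularity and LSU bounds yields global \emph{classical} positive solutions; this is what licenses testing with $\ln c_\epsilon$ and differentiating the $v$-equation in space. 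Your fixed-point alternative with the ODE solved as a functional of $c_\varepsilon$ would have to justify these manipulations for merely weak solutions whose diffusion coefficient inherits only the (low) spatial regularity of $v$ --- not obviously workable, and in any case the limits $\epsilon_1\to0$, $\epsilon_2\to0$, $\epsilon_4\to0$ must be taken in a specific order (the compactness used on $\{v=0\}$ is uniform in $\epsilon_1$ but not in $\epsilon_2$), which a single-parameter regularization cannot reproduce. Second, your claim that the entropy dissipation controls $\frac{v^{1/2}\nabla c}{1+v^{1/2}c}$ in $L^2$ is correct and matches \eqref{estgr2}--\eqref{estgr2_}, but the companion term $\frac{c\nabla v^{1/2}}{1+v^{1/2}c}$ is not handled by the $L^2$ bound on $\nabla v^{1/2}$ alone: the paper needs the dissipation term \eqref{a5}, i.e.\ $\int_0^T\left((1-v_\varepsilon)c_\varepsilon,\left|\nabla v_\varepsilon^{1/2}\right|^2\right)dt\leq C(T)$, together with the splitting $1\leq v^{1/2}+(1-v)^{1/2}$, to reach the $L^{4/3}$ bound.
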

\noindent
The proof of {\it Theorem~\ref{maintheo}} is based on a suitable approximation of the degenerate PDE-ODE system \eqref{hapto} by a family of nondegenerate PDE-PDE systems, 
derivation of a set of priori estimates which ensure necessary compactness and, finally, the passing to the limit. 
While the overall structure of the proof  
is a standard one for a haptotaxis system,  we encounter considerable difficulties in each of the three steps due to the previously mentioned degenerate diffusion 
in equation \eqref{c}, due to the ODE \eqref{v} having  no diffusion at all (i.e., everywhere degenerate), and, finally, due to a strong coupling between the two equations. 


\begin{Remark}[Notation]
We make the following useful convention: 
The statement that a constant depends on the parameters of the problem means that it depends on the constants $\kappa, \mu_c, \eta, \mu_v, \lambda$ and $\theta$ (see below), the 
norms of the initial data $(c_0,v_0)$, the space dimension $N$, and the domain $\Omega$. This dependence on the parameters is subsequently {\bf not} indicated in an explicit way.
\end{Remark}
\section{Approximating problems}\label{aproxi}
In this section, we introduce and study a family of non-degenerate approximations for problem \eqref{hapto}. However, before adding some regularizing terms to the system, we 
reformulate it in a manner that turns out to be convenient for our analysis:
\begin{subequations}\label{haptomod}
\begin{alignat}{3}
 &\partial_t c=\nabla\cdot\left(\frac{\kappa_c vc}{1+vc}\nabla c-\frac{2\kappa_v v^{\frac{1}{2}}c}{1+v}\nabla \psi(v)\right)+\mu_cc(1-c-\eta v)&&
 \text{ in }\R^+\times\Omega,\label{cmod}\\
&\partial_t \psi(v)=\frac{\mu_v v^{\frac{1}{2}}(1-v)-\lambda v^{\frac{1}{2}}c}{1+v}&&\text{ in }\R^+\times\Omega, \label{v2}\\
 &\frac{\kappa_c vc}{1+vc}\partial_{\nu} c-\frac{2\kappa_v v^{\frac{1}{2}}c}{1+v}\partial_{\nu} \psi(v)=0&&\text{ in }\R^+\times\partial\Omega,\label{bcmod}\\
 &c(0)=c_0,\ \psi(v(0))=\psi(v_0) &&\text{ in }\Omega,
\end{alignat}
\end{subequations}
where $$\psi:[0,1]\rightarrow\left[0,\frac{\pi}{4}\right],\  \psi(v):=\frac{1}{2}\int_0^{v}\frac{1}{s^{\frac{1}{2}}(1+s)}\,ds=\arctan\left(v^{\frac{1}{2}}\right).$$ 
Unlike the model in \cite{SSW}, the haptotaxis coefficient lacks a factor $v$ in the nominator, whose presence was essential for obtaining estimates involving  
$\nabla v$ (and which relied on differentiating the equation for $v$). Here we compensate the absence of $v$ by rearranging equation \eqref{v} in a convenient way.\\

\noindent
Equation \eqref{v2} is obtained from \eqref{v} by dividing both sides of the equation by $v^{\frac{1}{2}}(1+v)$. For the taxis part of the flux, we used the obvious identity
\begin{align}
 \frac{2\kappa_v v^{\frac{1}{2}}c}{1+v}\nabla \psi(v)=\frac{\kappa_v c}{(1+v)^2}\nabla v.\nonumber
\end{align}
Clearly, $\psi$ is a strictly monotonically increasing function and satisfies
\begin{align}
 \frac{1}{2} \left(v^{\frac{1}{2}}\right)'\leq\psi'(v)\leq  \left(v^{\frac{1}{2}}\right)' \text{ for all }v\in[0,1].\label{dpsi}
\end{align}
Starting from  \eqref{haptomod}, we fix some $$\theta>N+2$$ and consider for each  $\epsilon=\left(\epsilon_{1},\epsilon_{2},\epsilon_{3},\epsilon_{4}\right)\in(0,1)^4$ the  
system 
\begin{subequations}\label{chemoe}
\begin{alignat}{3}
  &\partial_t c_{\epsilon}=\epsilon_{2}\Delta c_{\epsilon}+\nabla\cdot\left(\frac{\kappa_c v_{\epsilon}c_{\epsilon}}{1+v_{\epsilon}c_{\epsilon}}\nabla c_{\epsilon}-\frac{2\kappa_v v_{\epsilon}^{\frac{1}{2}}c_{\epsilon}}{1+v_{\epsilon}}\nabla \psi(v_{\epsilon})\right)+\mu_c c_{\epsilon} (1-c_{\epsilon}-\eta v_{\epsilon})-\epsilon_{1} c_{\epsilon}^{\theta}&&\text{ in }\R^+\times\Omega,\label{eq1e}\\
 &\partial_t \psi(v_{\epsilon})=\epsilon_{1}\Delta \psi(v_{\epsilon}) +\frac{\mu_v v_{\epsilon}^{\frac{1}{2}}(1-v_{\epsilon})-\lambda v_{\epsilon}^{\frac{1}{2}}c_{\epsilon}}{1+v_{\epsilon}}&&\text{ in }\R^+\times\Omega,\label{eq2e}\\
 &\partial_{\nu} c_{\epsilon}=\partial_{\nu} \psi(v_{\epsilon})=0&&\text{ in }\R^+\times\partial\Omega,\label{bc1}\\
 &c_{\epsilon}(0)=c_{\epsilon_{3}0},\ \psi(v_{\epsilon}(0))=\psi(v_{\epsilon_{4}0}) &&\text{ in }\Omega.
\end{alignat}
\end{subequations}
Here, the families $\{c_{\epsilon_{3}0}\}$ and $\{v_{\epsilon_{4}0}\}$ of initial values are parameterized by $\epsilon_{3}$ and $\epsilon_{4}$, respectively, 
are independent of $\epsilon_{1},\epsilon_{2}$ and  satisfy
\begin{align}
 &c_{\epsilon_{3}0},\psi(v_{\epsilon_{4}0})\in W^{1,\infty}(\Omega),\nonumber\\
 &c_{\epsilon_{3}0}\geq0,\ 0\leq v_{\epsilon_{4}0}\leq1 \text{ in }\overline{\Omega},\ c_{\epsilon_{3}0} \notequiv0,\ v_{\epsilon_{4}0}\notequiv0,1,\nonumber\\
 &||c_{\epsilon_{3}0}\ln c_{\epsilon_{3}0}||_1,\ \left\|\psi(v_{\epsilon_{4}0})\right\|_{H^1(\Omega)}\leq\C.\nonumber
\end{align}
They are yet to be further specified below. 

\noindent
For each $\epsilon_{1},\epsilon_{2}\in(0,1)$, system the \eqref{eq1e}-\eqref{eq2e} has the form of a nondegenerate quasilinear {\it chemotaxis} system with respect to variables $c_{\epsilon}$ and $\psi(v_{\epsilon})$. It is clear that for $\epsilon=0$ we regain - at least formally - the original degenerate haptotaxis system \eqref{cmod}-\eqref{v2}.
As it turns out (see the subsequent {\it Section~\ref{existence}}), a weak solution to \eqref{haptomod} can be obtained as a limit of a sequence of solutions to \eqref{chemoe}.\\ 

\noindent
In order to prove the global well-posedness for system \eqref{chemoe}, we intend to use the standard Amann theory for abstract parabolic quasilinear systems \cite{Amann1}. 

\noindent
We need some more notations. Let us define for all $(c,v)\in\R^+_0\times[0,1]$ the matrices 
\begin{align*}
 &A_{\epsilon}(c,v):=\left[\begin{array}{cc}
          \epsilon_{2}+\frac{\kappa_c vc}{1+vc}&-\frac{2\kappa_v cv^{\frac{1}{2}}}{1+v}\\
           0&\epsilon_{1}
          \end{array}
\right],\quad \overline{A}_{\epsilon}(c,\psi(v)):=A_{\epsilon}(c,v),\\
&F_{\epsilon}(c,v):=\left[\begin{array}{c}
           \mu_c c (1-c-\eta v)-\epsilon_{1} c^{\theta}\\
           \frac{\mu_v v^{\frac{1}{2}}(1-v)-\lambda v^{\frac{1}{2}}c}{1+v}
          \end{array}
\right],\quad \overline{F}_{\epsilon}(c,\psi(v)):=F_{\epsilon}(c,v).
\end{align*}
Since $\psi:[0,1]\rightarrow\left[0,\frac{\pi}{4}\right]$ is a strictly monotonically increasing function, $\overline{A}_{\epsilon}$ and $\overline{F}_{\epsilon}$ are well-defined on $\R^+_0\times\left[0,\frac{\pi}{4}\right]$. Let also
\begin{align*}
 V_{\epsilon}:=\psi(v_{\epsilon}),\ V_{\epsilon0}:=\psi(v_{\epsilon_{4}0}).
\end{align*}
In this notation, system \eqref{chemoe} takes the form
\begin{subequations}\label{chemoemod}
\begin{alignat}{3}
 &\partial_t c_{\epsilon}=\nabla\cdot\left([\overline{A}_{\epsilon}(c_{\epsilon},V_{\epsilon})]_{11}\nabla c_{\epsilon} +[\overline{A}_{\epsilon}(c_{\epsilon},V_{\epsilon})]_{12}\nabla V_{\epsilon})\right)+[\overline{F}_{\epsilon}(c_{\epsilon},V_{\epsilon})]_1&&\text{ in }\R^+\times\Omega,\label{eq1emod}\\
 &\partial_t V_{\epsilon}=\nabla\cdot\left([\overline{A}_{\epsilon}(c_{\epsilon},V_{\epsilon})]_{21}\nabla c_{\epsilon} +[\overline{A}_{\epsilon}(c_{\epsilon},V_{\epsilon})]_{22}\nabla V_{\epsilon})\right)+[\overline{F}_{\epsilon}(c_{\epsilon},V_{\epsilon})]_2&&\text{ in }\R^+\times\Omega,\\
  &\partial_{\nu} c_{\epsilon}=\partial_{\nu} V_{\epsilon}=0&&\text{ in }\R^+\times\partial\Omega,\label{bc1mod}\\
 &c_{\epsilon}(0)=c_{\epsilon_{3}0},\ V_{\epsilon}(0)=V_{\epsilon0} &&\text{ in }\Omega.
\end{alignat}
\end{subequations}
It is easy to see that
\begin{enumerate}
 \item $\overline{A}_{\epsilon}$ and $\overline{F}_{\epsilon}$ are (infinitely) smooth;
 \item $\overline{A}_{\epsilon}$ is upper triangular  with $\left[\overline{A}_{\epsilon}\right]_{22}$  independent of the first variable;
 \item $\left[\overline{A}_{\epsilon}\right]_{11}\geq \epsilon_{2}>0, 
 \left[\overline{A}_{\epsilon}\right]_{22}\geq\epsilon_{1}>0$;
 \item $c_{\epsilon_{3}0}\geq0$, $0\leq V_{\epsilon0}\leq\frac{\pi}{4}$, $c_{\epsilon_{3}0}\notequiv0$, $V_{\epsilon0}\notequiv0,\frac{\pi}{4}$. 
\end{enumerate}
In this situation, we may apply several results from \cite{Amann1} on local and global existence and regularity of solutions for regular quasilinear parabolic systems, 
see \cite[Theorems 14.4, 14.7,  and 15.5]{Amann1}. These results yield the following:

\noindent
if  for all $0<\tau<T$ it holds  {\it a priori} that
\begin{align}
&0<c_{\epsilon}\leq C\left(\epsilon_{1}^{-1},\epsilon_{2}^{-1},\tau^{-1},T\right),\ 0<V_{\epsilon}<\frac{\pi}{4}\text{ in }[\tau,T]\times\overline{\Omega                                                                                                                                                                                                                                                                                                      },\label{bnd1}
\end{align}
then problem \eqref{chemoemod} has a unique global classical nonnegative solution $(c_{\epsilon},V_{\epsilon})$, and this solution satisfies \eqref{bnd1}. 

\noindent
Thus, it remains to prove that  \eqref{bnd1} holds a priori. Observe first that 
\begin{enumerate}[resume]
 \item  $\left[\overline{A}_{\epsilon}(0,\cdot)\right]_{12},\left[\overline{A}_{\epsilon}(\cdot,0)\right]_{21}\equiv0$ and $[\overline{F}_{\epsilon}(0,\cdot)]_1,[\overline{F}_{\epsilon}(\cdot,0)]_2=0$;
 \item  $\left[\overline{A}_{\epsilon}\left(\cdot,\frac{\pi}{4}\right)\right]_{21}\equiv0$ and $[\overline{F}_{\epsilon}\left(\cdot,\frac{\pi}{4}\right)]_2\leq 0$.
\end{enumerate}
Hence, due to the strong maximum principle and the Hopf lemma for parabolic equations, it holds a priori  that
\begin{align}
&c_{\epsilon}> 0,\ 0< V_{\epsilon}<\frac{\pi}{4}\text{ in }\R^+\times\overline{\Omega},\nonumber
\end{align}
or, in terms of the original variables,
\begin{align}
&c_{\epsilon}> 0,\ 0< v_{\epsilon}<1\text{ in }\R^+\times\overline{\Omega}.\label{hopf}
\end{align}
Next, we integrate both sides of \eqref{eq1e}  over $\Omega$, using partial integration and the boundary conditions where necessary. We obtain that
\begin{align}
 \frac{d}{dt}||c_{\epsilon}||_1=&\int_{\Omega}\mu_c c_{\epsilon} (1-c_{\epsilon}-\eta v_{\epsilon})-\epsilon_{1} c_{\epsilon}^{\theta}\,dx\nonumber\\
 \leq&\C-\C||c_{\epsilon}||_1-\epsilon_{1}\int_{\Omega}c_{\epsilon}^{\theta}\,dx.\label{est5}
\end{align}
Estimate \eqref{est5} yields with help of the Gronwall lemma that
\begin{align}
  &||c_{\epsilon}||_{L^{\infty}(\R^+_0;\LOne)}\leq\C,\label{c1}\\
 & ||c_{\epsilon}||_{L^{\theta}((0,T)\times\Omega)}\leq\C\left(\epsilon_{1}^{-1},T\right).\label{a7_1}
\end{align}
Combining \eqref{hopf} and \eqref{a7_1}, we conclude from \eqref{eq2e} that 
\begin{align}
 \left\|\partial_t \psi(v_{\epsilon})-\epsilon_{2}\Delta \psi(v_{\epsilon})\right\|_{L^{\theta}((0,T)\times\Omega)}\leq\C\left(\epsilon_{1}^{-1},T\right).\label{est10}
\end{align}
Together with known results on maximal Sobolev regularity for parabolic equations (compare, e.g., Theorems 4.10.2 and 4.10.7 and Remark 4.10.9 from \cite{Amann1995}), \eqref{est10} yields that 
\begin{align}
 ||\psi(v_{\epsilon})||_{C([\tau,T];W^{2\left(1-\frac{1}{\theta}\right),\theta}(\Omega))}\leq\C\left(\epsilon_{1}^{-1},\epsilon_{2}^{-1},\tau^{-1},T\right),\ \tau\in(0,T].\nonumber
\end{align}
Using the Sobolev embedding $W^{2\left(1-\frac{1}{\theta}\right),\theta}(\Omega)\subset \LInf$ (recall that $\theta>N+2$), we thus arrive at 
\begin{align}
 ||\nabla\psi(v_{\epsilon})||_{L^{\infty}((\tau,T)\times\Omega)}\leq\Cl{q}\left(\epsilon_{1}^{-1},\epsilon_{2}^{-1},\tau^{-1},T\right),\ \tau\in(0,T].\label{npsi}
\end{align}

\noindent
Let us now return to equation \eqref{eq1e}. It can be rewritten in the form
\begin{align}
 \partial_t c_{\epsilon}=\nabla\cdot q_{\epsilon}+f_{\epsilon},\label{eqd}
\end{align}
where 
\begin{align}
&q_{\epsilon}:=a_{\epsilon}\nabla c_{\epsilon} +Q_{\epsilon}c_{\epsilon},\quad 
a_{\epsilon}:=\epsilon_{2}+\frac{\kappa_c v_{\epsilon}c_{\epsilon}}{1+v_{\epsilon}c_{\epsilon}},\quad 
 Q_{\epsilon}:=-\frac{2\kappa_v v_{\epsilon}^{\frac{1}{2}}}{1+v_{\epsilon}}\nabla \psi(v_{\epsilon}),\label{qe}\\
 & f_{\epsilon}:=\mu_c c_{\epsilon} (1-c_{\epsilon}-\eta v_{\epsilon})-\epsilon_{1} c_{\epsilon}^{\theta}.\label{fe}
\end{align}
Equation \eqref{eqd} is in divergence form. 
Due to \eqref{hopf} and \eqref{npsi}, its coefficients satisfy the inequalities
\begin{align}
a_{\epsilon}\geq\epsilon_{2},\quad 
 ||Q_{\epsilon}||_{L^{\infty}((\tau,T)\times\Omega)}\leq2\kappa_v\Cr{q}\left(\epsilon_{1}^{-1},\epsilon_{2}^{-1},\tau^{-1},T\right),\ \tau\in(0,T],\quad
 f_{\epsilon}\leq\C.\nonumber
\end{align}
Therefore, standard results on uniform boundedness for linear parabolic equations \cite[Chapter 3, \S 7]{LSU} are applicable to equation \eqref{eqd} equipped with homogeneous Neumann boundary conditions and yield
\begin{align}
 ||c_{\epsilon}||_{L^{\infty}((\tau,T)\times\Omega)}\leq\C\left(\epsilon_{1}^{-1},\epsilon_{2}^{-1},\tau^{-1},T\right),\ \tau\in(0,T].\nonumber
\end{align}
This finishes the proof of \eqref{bnd1}.

\subsection*{Approximating initial data}
Our next step is to construct a suitable family of approximations to the initial data. Since we assume that $(c_0,v_0)$ satisfies {\it Assumptions~\ref{ini}}, there exists for each $\left(\epsilon_{3},\epsilon_{4}\right)\in(0,1)^2$ a pair of approximations $\left(c_{\epsilon_{3}0},v_{\epsilon_{4}0}\right)$ with the following properties:
\begin{align}
 &c_{\epsilon_{3}0},v_{\epsilon_{4}0}^{\frac{1}{2}}\in W^{1,\infty}(\Omega),\label{e34bnd}\\
 &c_{\epsilon_{3}0}\geq0,\ 0\leq v_{\epsilon_{4}0}\leq1 \text{ in }\overline{\Omega},\ c_{\epsilon_{3}0}, v_{\epsilon_{4}0}\notequiv0,\\
 &||c_{\epsilon_{3}0}\ln c_{\epsilon_{3}0}||\leq2||c_{0}\ln c_{0}||_1,\\
 & \left\|\nabla v_{\epsilon_{4}0}^{\frac{1}{2}}\right\|\leq2\left\| v_0^{\frac{1}{2}}\right\|_{H^1(\Omega)},\\
 &||c_{\epsilon_{3}0}-c_{0}||_1\leq\epsilon_{3},\label{coe}\\
 & \left\|v_{\epsilon_{4}0}^{\frac{1}{2}}-v_{0}^{\frac{1}{2}}\right\|\leq\epsilon_{4}.\label{voe}
\end{align}
Recall our aim is to pass to the limit for $\epsilon \to 0$ in the approximating problem. After letting $\epsilon _1\to 0$ in equation \eqref{eq2e} we obtain an ODE,  
hence the set $\{v(t,\cdot )=0\}$ is preserved in time (possibly up to some subsets of measure zero). Therefore, it turns out that we have to pay particular care at the set $\{v_{\epsilon_40}=0\}$ which should not shrink substantially with respect to $\{v_0=0\}$. We may assume that
\begin{align}
 \left|\{v_0=0\}\backslash\inner\left\{v_{\epsilon_{4}0}=0\right\}\right|\leq\epsilon_{4}.\label{star}
\end{align}
Indeed, due to a Lusin property for Sobolev functions  \cite[Chapter 6, Theorem 6.14]{EvansGar}, there exists a function $\xi$ such that
\begin{align}
&\xi\in W^{1,\infty}(\Omega),\label{L1}\\
 & \left\|\xi\right\|_{H^1(\Omega)}\leq2\left\|v_0^{\frac{1}{2}}\right\|_{H^1(\Omega)},\label{L12}\\
 &\left|\left\{\xi\neq v_0^{\frac{1}{2}}\right\}\right|\leq\frac{\epsilon_{4}}{4}.\label{L2}
\end{align}
We define  $$v_{\epsilon_{4}0}:=\left(\min\{\xi_+,1\}-\frac{\epsilon_{4}}{2|\Omega|}\right)_+^2.$$ Let us check that $v_{\epsilon_{4}0}$ satisfies the above assumptions.  Indeed, due to \eqref{L1}-\eqref{L12}, we have that
\begin{align}
&v_{\epsilon_{4}0}^{\frac12}\in W^{1,\infty}(\Omega),\\
 &\left\|\nabla v_{\epsilon_{4}0}^{\frac{1}{2}}\right\|\leq ||\nabla \xi||\leq2\left\|v_0^{\frac{1}{2}}\right\|_{H^1(\Omega)},\nonumber
 \end{align}
and
\begin{align}
 \left\|v_{\epsilon_{4}0}^{\frac{1}{2}}-v_{0}^{\frac{1}{2}}\right\| \leq &2\left|\left\{\xi\neq v_0^{\frac{1}{2}}\right\}\right|+\left\|\chi_{\left\{\xi= v_0^{\frac{1}{2}}\right\}}\left(\left(\xi-\frac{\epsilon_{4}}{2|\Omega|}\right)_+-\xi\right)\right\|\nonumber\\
 \leq&\epsilon_{4}.
 \end{align}
Moreover, it holds that 
 \begin{align}
 &\{\xi=0\}\subset\left\{\min\{\xi_+,1\}<\frac{\epsilon_{4}}{2|\Omega|}\right\}\subset\inner\left\{\min\{\xi_+,1\}\leq\frac{\epsilon_{4}}{2|\Omega|}\right\}\cup\partial\Omega=\inner\{v_{\epsilon_{4}0}=0\}\cup\partial\Omega.\label{L4}
\end{align}
Combining \eqref{L2} and \eqref{L4}, we obtain  \eqref{star}.
\section{A priori estimates}\label{apriori}
In this section we establish several uniform a priori estimates for system \eqref{chemoe}. 
To begin with, we apply the gradient operator to both sides of \eqref{eq2e}:
\begin{align}
\partial_t \nabla \psi(v_{\epsilon})=&\epsilon_{1}\Delta \nabla \psi(v_{\epsilon})
-\lambda \frac{v_{\epsilon}^{\frac{1}{2}}}{1+v_{\epsilon}}\nabla c_{\epsilon}-\frac{\lambda (1-v_{\epsilon})c_{\epsilon}+\mu_v(-1+4v_{\epsilon}+v_{\epsilon}^2)}{(1+v_{\epsilon})^2}\nabla v_{\epsilon}^{\frac{1}{2}}.
 \label{deq2e}
\end{align}
Further, we multiply \eqref{eq1e} by 
$\ln c_{\epsilon}$ and \eqref{deq2e}  by $\frac{\kappa_v}{\lambda}\nabla \psi(v_{\epsilon})$ and integrate over $\Omega$ using partial integration and the boundary conditions where necessary. Adding the resulting identities together, we obtain after some calculation that
\begin{align}
 &\frac{d}{dt}\left(\left(1,c_{\epsilon}\ln c_{\epsilon}-c_{\epsilon}\right)+\frac{2\kappa_v}{\lambda}\left(\frac{1}{(1+v_{\epsilon})^2},\left|\nabla v_{\epsilon}^{\frac{1}{2}}\right|^2\right)\right)+\epsilon_{2}\left(\frac{1}{c_{\epsilon}},|\nabla c_{\epsilon}|^2\right)+\epsilon_{1}\left(\frac{1}{\theta}\left(c_{\epsilon}^{\theta},\ln c_{\epsilon}^{\theta}\right)+\left\|\Delta \psi(v_{\epsilon})\right\|^2\right)\nonumber\\
 &+\left(\frac{\kappa_c v_{\epsilon}}{1+v_{\epsilon}c_{\epsilon}},|\nabla c_{\epsilon}|^2\right)+\frac{2\kappa_v}{\lambda}\left(\lambda(1-v_{\epsilon})c_{\epsilon}+5\mu_vv_{\epsilon}+\mu_vv_{\epsilon}^2,\frac{\left|\nabla v_{\epsilon}^{\frac{1}{2}}\right|^2}{(1+v_{\epsilon})^3}\right)\nonumber\\
 \leq&\left(\mu_c c_{\epsilon} (1-c_{\epsilon}-\eta v_{\epsilon}),\ln c_{\epsilon} \right)+\frac{2\mu_v\kappa_v}{\lambda}\left(\frac{1}{(1+v_{\epsilon})^2},\left|\nabla v_{\epsilon}^{\frac{1}{2}}\right|^2\right)\nonumber\\
 \leq&-\Cl{C4}\left(\chi_{\{c_{\epsilon}>1\}},c_{\epsilon}^2\ln c_{\epsilon}\right)+\frac{2\mu_v\kappa_v}{\lambda}\left(\frac{1}{(1+v_{\epsilon})^2},\left|\nabla v_{\epsilon}^{\frac{1}{2}}\right|^2\right)+\Cl{C5}.
\end{align}
By using the Gronwall lemma, we thus arrive for arbitrary $T\in\R^+$ at the estimates
\begin{align}
& \max_{t\in[0,T]}\left(\chi_{\{c_{\epsilon}>1\}},c_{\epsilon}\ln c_{\epsilon}\right)\leq\Cr{C2}(T),\label{a1}\\ & \max_{t\in[0,T]}\left\|\nabla v_{\epsilon}^{\frac{1}{2}}\right\|^2\leq\Cl{C2}(T),\label{a2}\\
&\int_0^T\left\|c_{\epsilon}^2\ln c_{\epsilon}^2\right\|_1\,dt\leq\Cr{C2}(T),\label{a3}\\ & \int_0^T\left(\frac{v_{\epsilon}}{1+v_{\epsilon}c_{\epsilon}},|\nabla c_{\epsilon}|^2\right)\,dt\leq\Cr{C2}(T),\label{a4}\\ 
& \int_0^T\left((1-v_{\epsilon})c_{\epsilon},\left|\nabla v_{\epsilon}^{\frac{1}{2}}\right|^2\right)\,dt\leq\Cr{C2}(T),\label{a5}\\
&\int_0^T\left(\frac{1}{c_{\epsilon}},|\nabla c_{\epsilon}|^2\right)\,dt\leq\epsilon_{2}^{-1}\Cr{C2}(T),\label{a6}\\ 
& \int_0^T\left\|c_{\epsilon}^{\theta}\ln c_{\epsilon}^{\theta}\right\|_1\,dt\leq\epsilon_{1}^{-1}\Cr{C2}(T),\label{a7}\\ 
& \int_0^T\left\|\Delta \psi(v_{\epsilon})\right\|^2\,dt\leq\epsilon_{1}^{-1}\Cr{C2}(T).\label{a8}
\end{align}
Throughout the section, we will obtain further estimates for functions  $c_{\epsilon}$ and $v_{\epsilon}$ and their combinations, which we will use in the existence proof (see {\it Section~\ref{existence}} below). 

\noindent
By means of the de la Vall\'ee-Poussin theorem, we obtain from \eqref{a3} that 
\begin{align}
 \left\{c_{\epsilon}^2\right\}\text{ is uniformly integrable in } (0,T)\times\Omega\label{ui}
\end{align}
with
\begin{align}
 ||c_{\epsilon}||_{L^2((0,T)\times\Omega)}\leq\C(T).  \label{c2}
\end{align}

\noindent
Next, we deal with the relaxation terms in \eqref{eq1e}. Using the H\"older inequality, we obtain with  \eqref{a6} and \eqref{c2} that
\begin{eqnarray}
 \left\|\epsilon_{2}\nabla c_{\epsilon}\right\|_{L^{\frac{4}{3}}((0,T)\times\Omega)}&\leq &\epsilon_{2}||c_{\epsilon}||_{L^2((0,T)\times\Omega)}^{\frac{1}{2}}\left(\int_0^T\left(\frac{1}{c_{\epsilon}},|\nabla c_{\epsilon}|^2\right)\,dt\right)^{\frac{1}{2}}\nonumber\\
 &\leq &\epsilon_{2}^{\frac{1}{2}}\Cl{C9}(T)\label{nce}\\
 &\underset{\epsilon_{2}\rightarrow0}{\rightarrow}&0,\label{este2}
\end{eqnarray}
Further, since the function $g(y):=y\ln y$ is convex, we obtain from \eqref{a7} with help of the Jensen's inequality that
\begin{align}
 g\left(\frac{1}{|(0,T)\times\Omega|}\int_{(0,T)\times\Omega}\max\left\{1,c_{\epsilon}^{\theta}\right\}\,dxdt\right)\leq&\frac{1}{|(0,T)\times\Omega|}\int_{(0,T)\times\Omega}g\left(\max\left\{1,c_{\epsilon}^{\theta}\right\}\right)\,dxdt\nonumber\\
 \leq& \frac{\Cl{C10}(T)}{\epsilon_{1}|(0,T)\times\Omega|}.\label{este3}
\end{align}
Since $g$ is increasing on $(1,\infty)$ and $\frac{g(y)}{y}\underset{y\rightarrow\infty}{\rightarrow}\infty$, \eqref{este3} yields that
\begin{eqnarray}
 \epsilon_{1}\int_{(0,T)\times\Omega} c_{\epsilon}^{\theta}\,dxdt&\leq &\epsilon_{1}|(0,T)\times\Omega|g^{(-1)}\left(\frac{\Cr{C10}(T)}{\epsilon_{1}|(0,T)\times\Omega|}\right)\nonumber\\
 &\underset{\epsilon_{1}\rightarrow0}{\rightarrow}&0.\label{este4}
\end{eqnarray}
Using \eqref{hopf}, we estimate the reaction term $f_{\epsilon}$  (as defined in \eqref{fe}):
\begin{align}
 |f_{\epsilon}|=\left|\mu_c c_{\epsilon} (1-c_{\epsilon}-\eta v_{\epsilon})-\epsilon_{1} c_{\epsilon}^{\theta}\right|\leq \C\left(c_{\epsilon}^2+1\right)+\epsilon_{1} c_{\epsilon}^{\theta}.\nonumber
\end{align}
Hence, due to \eqref{c2} and \eqref{este4}, it holds that
\begin{align}
||f_{\epsilon}||_{L^1((0,T)\times\Omega)}\leq\C(T).\label{f1}                                                                               \end{align}
Using \eqref{dpsi} and \eqref{hopf}, we obtain from \eqref{eq2e} that
\begin{align}
 \left|\partial_t \psi(v_{\epsilon})\right|\leq&2\left|\partial_t v_{\epsilon}^{\frac{1}{2}}\right|\nonumber\\
 \leq&2\epsilon_{1}|\Delta \psi(v_{\epsilon})| +2\left|\frac{\mu_v v_{\epsilon}^{\frac{1}{2}}(1-v_{\epsilon})-\lambda v_{\epsilon}^{\frac{1}{2}}c_{\epsilon}}{1+v_{\epsilon}}\right|\nonumber\\
 \leq &2\epsilon_{1}|\Delta \psi(v_{\epsilon})| +2\mu_v+2\lambda c_{\epsilon}. \label{vt_}
\end{align}
Combining \eqref{a8} and \eqref{c2}, we conclude from \eqref{vt_} that 
\begin{align}
 \left\|\partial_t v_{\epsilon}^{\frac{1}{2}}\right\|_{L^2((0,T)\times\Omega)}\leq\C(T).\label{vt}
\end{align}
Next, we study the function $v_{\epsilon}^{\frac{1}{2}}c_{\epsilon}$.
Observe that
\begin{align}
 \left(v_{\epsilon}^{\frac{1}{2}}c_{\epsilon}\right)^2\ln\left(v_{\epsilon}^{\frac{1}{2}}c_{\epsilon}\right)^2=c_{\epsilon}^2v_{\epsilon}\ln v_{\epsilon}+v_{\epsilon}c_{\epsilon}^2\ln c_{\epsilon}^2,\nonumber
\end{align}
Hence, due to \eqref{hopf} and \eqref{a3}, it holds that
\begin{align}
 \left\|\left(v_{\epsilon}^{\frac{1}{2}}c_{\epsilon}\right)^2\ln\left(v_{\epsilon}^{\frac{1}{2}}c_{\epsilon}\right)^2\right\|_{L^{1}((0,T)\times\Omega)}\leq\C(T).\label{wui}
\end{align}
Again, we apply the de la Vall\'ee-Poussin theorem and obtain from \eqref{wui}  that
\begin{align}
 \left\{\left(v_{\epsilon}^{\frac{1}{2}}c_{\epsilon}\right)^2\right\}\text{ is uniformly integrable in }(0,T)\times\Omega\label{wui1}
\end{align}
with 
\begin{align}
 \left\|v_{\epsilon}^{\frac{1}{2}}c_{\epsilon}\right\|_{L^2((0,T)\times\Omega)}\leq\C(T). \label{vc2}
\end{align}
Next, we consider the degenerate part of the diffusion flux.
Using \eqref{hopf}, we estimate as follows:
\begin{align}
 \frac{v_{\epsilon}c_{\epsilon}|\nabla c_{\epsilon}|}{1+v_{\epsilon}c_{\epsilon}}=&\left(\frac{v_{\epsilon}c_{\epsilon}}{1+v_{\epsilon}c_{\epsilon}}\right)^{\frac{1}{2}}c_{\epsilon}^{\frac{1}{2}}\left(\frac{v_{\epsilon}}{1+v_{\epsilon}c_{\epsilon}}|\nabla c_{\epsilon}|^2\right)^{\frac{1}{2}}\nonumber\\
 \leq&c_{\epsilon}^{\frac{1}{2}}\left(\frac{v_{\epsilon}}{1+v_{\epsilon}c_{\epsilon}}|\nabla c_{\epsilon}|^2\right)^{\frac{1}{2}}.\label{dif1}
\end{align}
Combining \eqref{a4}, \eqref{c2} and \eqref{dif1}, we obtain with the H\"older inequality that 
\begin{align}
 \left\|\frac{v_{\epsilon}c_{\epsilon}|\nabla c_{\epsilon}|}{1+v_{\epsilon}c_{\epsilon}}\right\|_{L^{\frac{4}{3}}((0,T)\times\Omega)}\leq ||c_{\epsilon}||_{L^2((0,T)\times\Omega)}^{\frac12}\left(\int_0^T\left(\frac{v_{\epsilon}}{1+v_{\epsilon}c_{\epsilon}},|\nabla c_{\epsilon}|^2\right)\,dt\right)^{\frac{1}{2}}\leq\C(T).\label{dif2}
\end{align}
Using \eqref{hopf}, we also have that
\begin{align}
 \frac{v_{\epsilon}^{\frac{3}{4}}c_{\epsilon}|\nabla c_{\epsilon}|}{1+v_{\epsilon}c_{\epsilon}}=&\left(\frac{\left(v_{\epsilon}c_{\epsilon}\right)^{\frac{1}{2}}}{1+v_{\epsilon}c_{\epsilon}}\right)^{\frac{1}{2}}c_{\epsilon}^{\frac{3}{4}}\left(\frac{v_{\epsilon}}{1+v_{\epsilon}c_{\epsilon}}|\nabla c_{\epsilon}|^2\right)^{\frac{1}{2}}\nonumber\\
 \leq&c_{\epsilon}^{\frac{3}{4}}\left(\frac{v_{\epsilon}}{1+v_{\epsilon}c_{\epsilon}}|\nabla c_{\epsilon}|^2\right)^{\frac{1}{2}}.\label{dif1_}
\end{align}
Then, \eqref{a4}, \eqref{c2} and \eqref{dif1_}, together with the H\"older inequality, yield that 
\begin{align}
 \left\|\frac{v_{\epsilon}^{\frac{3}{4}}c_{\epsilon}|\nabla c_{\epsilon}|}{1+v_{\epsilon}c_{\epsilon}}\right\|_{L^{\frac{8}{7}}((0,T)\times\Omega)}\leq ||c_{\epsilon}||_{L^2((0,T)\times\Omega)}^{\frac{3}{4}}\left(\int_0^T\left(\frac{v_{\epsilon}}{1+v_{\epsilon}c_{\epsilon}},|\nabla c_{\epsilon}|^2\right)\,dt\right)^{\frac{1}{2}}\leq\C(T).\label{dif2_}
\end{align}
As for the taxis part of the flux, we combine \eqref{a2} and \eqref{vc2} with the H\"older inequality in order to obtain that   
\begin{align}
 \left\|\frac{2 v_{\epsilon}^{\frac{1}{2}}c_{\epsilon}\left|\nabla v_{\epsilon}^{\frac{1}{2}}\right|}{(1+v_{\epsilon})^2}
 \right\|_{L^1((0,T)\times\Omega)}\leq 2\left\|v_{\epsilon}^{\frac{1}{2}}c_{\epsilon}\right\|_{L^2((0,T)\times\Omega)}\left\|\nabla v_{\epsilon}^{\frac{1}{2}}\right\|_{L^2((0,T)\times\Omega)}\leq\C(T).\label{tax2}
\end{align}
Combining \eqref{este2}, \eqref{dif2} and \eqref{tax2}, we gain an estimate for the flux $q_{\epsilon}$ (as defined in \eqref{qe}):
\begin{align}
 ||q_{\epsilon}||_{L^1((0,T)\times\Omega)}\leq\C(T).\label{qe1_}
\end{align}
Together with \eqref{f1}, \eqref{qe1_} yields that
 \begin{align}
 ||\partial_t c_{\epsilon}||_{L^1(0,T;W^{-1,1}(\Omega))}\leq\C(T).\label{ct1}
\end{align}
\subsection*{Estimates for an auxiliary function} 
Owing to the fact that the original diffusion coefficient in \eqref{c} is degenerate in $v$, it does not seem possible to obtain a uniform (in $\epsilon$) estimate for the gradient of  $\varphi(c_{\epsilon})$ in some Lebesgue space over $(0,T)\times\Omega$ for any    smooth, strictly increasing, and independent of $\epsilon$ function  
$\varphi$. In order to overcome this difficulty, we introduce for $\epsilon\in(0,1)$ an auxiliary function which involves {\it both} $c_{\epsilon}$ and $v_{\epsilon}$:
\begin{align}
  u_{\epsilon}:=\ln\left(1+v_{\epsilon}^{\frac{1}{2}}c_{\epsilon}\right).\nonumber
\end{align}
With \eqref{hopf}, we have that
\begin{align}
 0\leq\ln\left(1+v_{\epsilon}^{\frac{1}{2}}c_{\epsilon}\right)\leq 1+c_{\epsilon},\nonumber
\end{align}
so that, due to  \eqref{c1}, it holds that
\begin{align}
 ||u_{\epsilon}||_{L^1((0,T)\times\Omega)}\leq\C(T).\nonumber
\end{align}
As it turns out, the family $\{u_{\epsilon}\}$ is precompact in $L^1((0,T)\times\Omega)$. To prove this, we need uniform estimates for the partial derivatives of $u_{\epsilon}$ in some parabolic Sobolev spaces.

\noindent
We first study the spatial gradient of $u_{\epsilon}$.
We compute that 
\begin{align}
 \nabla u_{\epsilon}=\frac{c_{\epsilon}}{1+v_{\epsilon}^{\frac{1}{2}}c_{\epsilon}}\nabla v_{\epsilon}^{\frac{1}{2}}+\frac{v_{\epsilon}^{\frac{1}{2}}}{1+v_{\epsilon}^{\frac{1}{2}}c_{\epsilon}}\nabla c_{\epsilon}.\label{gru}
\end{align}
Using \eqref{hopf} and the trivial inequality \begin{align}
1\leq v_{\epsilon}^{\frac{1}{2}}+(1-v_{\epsilon})^{\frac{1}{2}},   \label{triv}                                           \end{align}
 we estimate the first summand on the right-hand side of \eqref{gru} in the following way:
\begin{align}
 \frac{c_{\epsilon}\left|\nabla v_{\epsilon}^{\frac{1}{2}}\right|}{1+v_{\epsilon}^{\frac{1}{2}}c_{\epsilon}}\leq&\frac{v_{\epsilon}^{\frac{1}{2}}c_{\epsilon}\left|\nabla v_{\epsilon}^{\frac{1}{2}}\right|}{1+v_{\epsilon}^{\frac{1}{2}}c_{\epsilon}}+\frac{(1-v_{\epsilon})^{\frac{1}{2}}c_{\epsilon}\left|\nabla v_{\epsilon}^{\frac{1}{2}}\right|}{1+v_{\epsilon}^{\frac{1}{2}}c_{\epsilon}}\nonumber\\
 \leq&\left|\nabla v_{\epsilon}^{\frac{1}{2}}\right|+c_{\epsilon}^{\frac{1}{2}}\left((1-v_{\epsilon})c_{\epsilon}\left|\nabla v_{\epsilon}^{\frac{1}{2}}\right|^2\right)^{\frac{1}{2}}.
\label{estgr1}
\end{align}
Using the H\"older inequality and estimates \eqref{a2}, \eqref{a5}  and \eqref{c2}, we conclude from \eqref{estgr1} that
\begin{align}
 \left\|\frac{c_{\epsilon}\left|\nabla v_{\epsilon}^{\frac{1}{2}}\right|}{1+v_{\epsilon}^{\frac{1}{2}}c_{\epsilon}}\right\|_{L^{\frac{4}{3}}((0,T)\times\Omega)}\leq& \C(T)\left\|\nabla v_{\epsilon}^{\frac{1}{2}}\right\|_{L^2((0,T)\times\Omega)}+||c_{\epsilon}||_{L^2((0,T)\times\Omega)}^{\frac12}\left(\int_0^T\left((1-v_{\epsilon})c_{\epsilon},\left|\nabla v_{\epsilon}^{\frac{1}{2}}\right|^2\right)\,dt\right)^{\frac{1}{2}}\nonumber\\
 \leq&\C(T).\label{estgr1_}
\end{align}
For the second summand on the right-hand side of \eqref{gru}, we have that
\begin{align}
 &\frac{v_{\epsilon}^{\frac{1}{2}}|\nabla c_{\epsilon}|}{1+v_{\epsilon}^{\frac{1}{2}}c_{\epsilon}}\leq \frac{v_{\epsilon}^{\frac{1}{2}}|\nabla c_{\epsilon}|}{1+v_{\epsilon}c_{\epsilon}}\leq \frac{v_{\epsilon}^{\frac{1}{2}}|\nabla c_{\epsilon}|}{(1+v_{\epsilon}c_{\epsilon})^{\frac{1}{2}}}.\label{estgr2}
\end{align}
Combining \eqref{a4} and \eqref{estgr2},
 we obtain that
 \begin{align}
  \left\|\frac{v_{\epsilon}^{\frac{1}{2}}|\nabla c_{\epsilon}|}{1+v_{\epsilon}^{\frac{1}{2}}c_{\epsilon}}\right\|_{L^2((0,T)\times\Omega)}\leq\C(T).\label{estgr2_}
 \end{align}
Altogether, we obtain from \eqref{gru} with \eqref{estgr1_} and \eqref{estgr2_} that
\begin{align}
 ||\nabla u_{\epsilon}||_{L^{\frac{4}{3}}((0,T)\times\Omega)}\leq\C(T).\label{estgru}
\end{align}
Next, we deal with the time derivative of $u_{\epsilon}$. Once again, it holds that
\begin{align}
 \partial_t u_{\epsilon}=\frac{c_{\epsilon}}{1+v_{\epsilon}^{\frac{1}{2}}c_{\epsilon}}\partial_t v_{\epsilon}^{\frac{1}{2}}+\frac{v_{\epsilon}^{\frac{1}{2}}}{1+v_{\epsilon}^{\frac{1}{2}}c_{\epsilon}}\partial_t c_{\epsilon}.\label{tu}
\end{align}
Using \eqref{hopf}, we obtain for the first summand on the right-hand side of \eqref{tu} that
\begin{align}
 \frac{c_{\epsilon}\left|\partial_t v_{\epsilon}^{\frac{1}{2}}\right|}{1+v_{\epsilon}^{\frac{1}{2}}c_{\epsilon}}\leq c_{\epsilon}\left|\partial_t v_{\epsilon}^{\frac{1}{2}}\right|.\label{est1s}
\end{align}
Combining \eqref{est1s} with \eqref{c2} and \eqref{vt}, we obtain that
\begin{align}
 \left\|\frac{c_{\epsilon}\partial_t v_{\epsilon}^{\frac{1}{2}}}{1+v_{\epsilon}^{\frac{1}{2}}c_{\epsilon}}\right\|_{L^1((0,T)\times\Omega)}\leq\C(T).\label{axx0}
\end{align}
In order to estimate the second summand on the right-hand side of \eqref{tu}, we multiply both sides of equation \eqref{eq1e} by $\frac{v_{\epsilon}^{\frac{1}{2}}}{1+v_{\epsilon}^{\frac{1}{2}}c_{\epsilon}}$ and obtain (compare notation, \eqref{qe}-\eqref{fe})  that
\begin{align}
 \frac{v_{\epsilon}^{\frac{1}{2}}}{1+v_{\epsilon}^{\frac{1}{2}}c_{\epsilon}}\partial_t c_{\epsilon}=\nabla\cdot \left(\frac{v_{\epsilon}^{\frac{1}{2}}}{1+v_{\epsilon}^{\frac{1}{2}}c_{\epsilon}}q_{\epsilon}\right)- q_{\epsilon}\cdot\nabla \frac{v_{\epsilon}^{\frac{1}{2}}}{1+v_{\epsilon}^{\frac{1}{2}}c_{\epsilon}} +\frac{v_{\epsilon}^{\frac{1}{2}}}{1+v_{\epsilon}^{\frac{1}{2}}c_{\epsilon}}f_{\epsilon}.\label{ax1}
\end{align}
Since 
\begin{align}
 \frac{v_{\epsilon}^{\frac{1}{2}}}{1+v_{\epsilon}^{\frac{1}{2}}c_{\epsilon}}\leq1,\nonumber
\end{align}
estimates \eqref{f1} and \eqref{qe1_} yield, respectively, that
\begin{align}
&\left\|\frac{v_{\epsilon}^{\frac{1}{2}}}{1+v_{\epsilon}^{\frac{1}{2}}c_{\epsilon}}q_{\epsilon}\right\|_{L^1((0,T)\times\Omega)}\leq\C(T),\label{axx1}\\
 &\left\|\frac{v_{\epsilon}^{\frac{1}{2}}}{1+v_{\epsilon}^{\frac{1}{2}}c_{\epsilon}}f_{\epsilon}\right\|_{L^1((0,T)\times\Omega)}\leq\C(T)\label{axx2}
\end{align}
It remains to estimate the second term on the right-hand side of \eqref{ax1}.
We compute that
\begin{align}
 \nabla\frac{v_{\epsilon}^{\frac{1}{2}}}{1+v_{\epsilon}^{\frac{1}{2}}c_{\epsilon}}=-\frac{v_{\epsilon}}{\left(1+v_{\epsilon}^{\frac{1}{2}}c_{\epsilon}\right)^2}\nabla c_{\epsilon}+\frac{1}{\left(1+v_{\epsilon}^{\frac{1}{2}}c_{\epsilon}\right)^2}\nabla v_{\epsilon}^{\frac{1}{2}},\nonumber
\end{align}
so that, due to \eqref{hopf},
\begin{align}
 \left|q_{\epsilon}\cdot\nabla \frac{v_{\epsilon}^{\frac{1}{2}}}{1+v_{\epsilon}^{\frac{1}{2}}c_{\epsilon}}\right|\leq&|q_{\epsilon}|\left|\nabla \frac{v_{\epsilon}^{\frac{1}{2}}}{1+v_{\epsilon}^{\frac{1}{2}}c_{\epsilon}}\right|\nonumber\\
 \leq&\left(\epsilon_{2}|\nabla c_{\epsilon}|+\frac{\kappa_c v_{\epsilon}c_{\epsilon}|\nabla c_{\epsilon}|}{1+v_{\epsilon}c_{\epsilon}}+\frac{2\kappa_v v_{\epsilon}^{\frac{1}{2}}c_{\epsilon}|\nabla \psi(v_{\epsilon})|}{1+v_{\epsilon}}\right)\left(\frac{v_{\epsilon}|\nabla c_{\epsilon}|}{\left(1+v_{\epsilon}^{\frac{1}{2}}c_{\epsilon}\right)^2}+\frac{\left|\nabla v_{\epsilon}^{\frac{1}{2}}\right|}{\left(1+v_{\epsilon}^{\frac{1}{2}}c_{\epsilon}\right)^2}\right)
 \nonumber\\
 \leq&\C\left(\epsilon_{2}|\nabla c_{\epsilon}|+\frac{ v_{\epsilon}c_{\epsilon}|\nabla c_{\epsilon}|}{1+v_{\epsilon}c_{\epsilon}}+v_{\epsilon}^{\frac{1}{2}}c_{\epsilon}\left|\nabla v_{\epsilon}^{\frac{1}{2}}\right|\right)\left(\frac{v_{\epsilon}|\nabla c_{\epsilon}|}{\left(1+v_{\epsilon}^{\frac{1}{2}}c_{\epsilon}\right)^2}+\frac{\left|\nabla v_{\epsilon}^{\frac{1}{2}}\right|}{\left(1+v_{\epsilon}^{\frac{1}{2}}c_{\epsilon}\right)^2}\right).\label{ax2}
\end{align}
Using \eqref{hopf} and \eqref{triv}, where necessary, we get the following estimates:
\begin{align}
& |\nabla c_{\epsilon}|\frac{v_{\epsilon}|\nabla c_{\epsilon}|}{\left(1+v_{\epsilon}^{\frac{1}{2}}c_{\epsilon}\right)^2}\leq \frac{v_{\epsilon}|\nabla c_{\epsilon}|^2}{1+v_{\epsilon}c_{\epsilon}},\label{ax3}
\end{align}
\begin{align}
 v_{\epsilon}^{\frac{1}{2}}c_{\epsilon}\left|\nabla v_{\epsilon}^{\frac{1}{2}}\right|\frac{v_{\epsilon}|\nabla c_{\epsilon}|}{\left(1+v_{\epsilon}^{\frac{1}{2}}c_{\epsilon}\right)^2}\leq &\left(\frac{v_{\epsilon}|\nabla c_{\epsilon}|^2}{1+v_{\epsilon}c_{\epsilon}}\right)^{\frac{1}{2}}\left|\nabla v_{\epsilon}^{\frac{1}{2}}\right|\nonumber\\
\leq &\frac{1}{2}\frac{v_{\epsilon}|\nabla c_{\epsilon}|^2}{1+v_{\epsilon}c_{\epsilon}}+\frac{1}{2}\left|\nabla v_{\epsilon}^{\frac{1}{2}}\right|^2,
\end{align}
\begin{align}
 |\nabla c_{
 \epsilon}|\frac{\left|\nabla v_{\epsilon}^{\frac{1}{2}}\right|}{\left(1+v_{\epsilon}^{\frac{1}{2}}c_{\epsilon}\right)^2}\leq&\frac{v_{\epsilon}^{\frac{1}{2}}|\nabla c_{\epsilon}|\left|\nabla v_{\epsilon}^{\frac{1}{2}}\right|}{\left(1+v_{\epsilon}c_{\epsilon}\right)^2}+\frac{(1-v_{\epsilon})^{\frac{1}{2}}|\nabla c_{\epsilon}|\left|\nabla v_{\epsilon}^{\frac{1}{2}}\right|}{\left(1+v_{\epsilon}c_{\epsilon}\right)^2}\nonumber\\
 \leq &\left(\frac{v_{\epsilon}|\nabla c_{\epsilon}|^2}{1+v_{\epsilon}c_{\epsilon}}\right)^{\frac{1}{2}}\left|\nabla v_{\epsilon}^{\frac{1}{2}}\right|+\left(\frac{|\nabla c_{\epsilon}|^2}{c_{\epsilon}}\right)^{\frac{1}{2}}\left((1-v_{\epsilon})c_{\epsilon}\left|\nabla v_{\epsilon}^{\frac{1}{2}}\right|^2\right)^{\frac{1}{2}}\nonumber\\
 \leq&\frac{1}{2}\frac{v_{\epsilon}|\nabla c_{\epsilon}|^2}{1+v_{\epsilon}c_{\epsilon}}+\frac{1}{2}\left|\nabla v_{\epsilon}^{\frac{1}{2}}\right|^2+\frac{1}{2}(1-v_{\epsilon})c_{\epsilon}\left|\nabla v_{\epsilon}^{\frac{1}{2}}\right|^2+\frac{1}{2}\frac{|\nabla c_{\epsilon}|^2}{c_{\epsilon}},
\end{align}
\begin{align}
 \frac{v_{\epsilon}c_{\epsilon}|\nabla c_{\epsilon}|}{1+v_{\epsilon}c_{\epsilon}}\frac{\left|\nabla v_{\epsilon}^{\frac{1}{2}}\right|}{\left(1+v_{\epsilon}^{\frac{1}{2}}c_{\epsilon}\right)^2}\leq &\left(\frac{v_{\epsilon}|\nabla c_{\epsilon}|^2}{1+v_{\epsilon}c_{\epsilon}}\right)^{\frac{1}{2}}\left|\nabla v_{\epsilon}^{\frac{1}{2}}\right|\nonumber\\
\leq &\frac{1}{2}\frac{v_{\epsilon}|\nabla c_{\epsilon}|^2}{1+v_{\epsilon}c_{\epsilon}}+\frac{1}{2}\left|\nabla v_{\epsilon}^{\frac{1}{2}}\right|^2,
\end{align}
\begin{align}
 v_{\epsilon}^{\frac{1}{2}}c_{\epsilon}\left|\nabla v_{\epsilon}^{\frac{1}{2}}\right|\frac{\left|\nabla v_{\epsilon}^{\frac{1}{2}}\right|}{\left(1+v_{\epsilon}^{\frac{1}{2}}c_{\epsilon}\right)^2}\leq &\left|\nabla v_{\epsilon}^{\frac{1}{2}}\right|^2.\label{ax4}
\end{align}
Combining \eqref{ax2}-\eqref{ax4} with \eqref{a2}, \eqref{a4}-\eqref{a6}, we obtain that
\begin{align}
 \left\|q_{\epsilon}\cdot\nabla \frac{v_{\epsilon}^{\frac{1}{2}}}{1+v_{\epsilon}^{\frac{1}{2}}c_{\epsilon}}\right\|_{L^1((0,T)\times\Omega)}\leq\C(T).\label{ax5}
\end{align}
Therefore,  \eqref{ax1}-\eqref{axx2} together with \eqref{ax5} yield that
\begin{align}
 \left\|\frac{v_{\epsilon}^{\frac{1}{2}}\partial_t c_{\epsilon}}{1+v_{\epsilon}^{\frac{1}{2}}c_{\epsilon}}\right\|_{L^1(0,T;W^{-1,1}(\Omega))}\leq\C(T).\label{ax6}
\end{align}
Finally, with help of estimates \eqref{axx0} and \eqref{ax6}, we obtain from \eqref{tu} that
\begin{align}
||\partial_t u_{\epsilon}||_{L^1(0,T;W^{-1,1}(\Omega))}\leq \C(T).\label{uet}
\end{align}
\section{Global existence for the original problem}\label{existence}
In this section we aim to pass to the limit in \eqref{chemoe} in order to obtain a solution of the original problem. 
\begin{Remark}[Notation]
 Let $ \{\epsilon_{i,n_{i}}\}\subset (0,1)$, $i= 1,2,3,4$, be four sequences such that for each $i= 1,2,3,4$ it holds that $\epsilon_{i,n_{i}}
\underset{n_{i}\rightarrow\infty}{\rightarrow}0$. In this section, we make use of the following vector notation:
\begin{align}
 &n_{i:4}:=\left(n_{i},\dots,n_{4}\right), \
\epsilon_{n_{i:4}}:=\left(\epsilon_{i,n_{i}},\dots,\epsilon_{4,n_4}\right),\ i=1,2,3.\nonumber
\end{align}
Let us illustrate the way we are going to apply it. Let $a_{\epsilon}$ be a family parameterized by $\epsilon$, i.e., by the quadruple $\left(\epsilon_{1},\epsilon_{2},\epsilon_{3},\epsilon_{4}\right)$. By writing
\begin{align}
 a_{\epsilon_{n_{1:4}}}\underset{n_{1}\rightarrow\infty}{\rightarrow}a_{n_{2:4}}\underset{n_{2:4}\rightarrow\infty}{\rightarrow} a\text{ (in some topology)},\label{limit}
\end{align}
we mean thus that the sequence $\{a_{\epsilon_{n_{1:4}}}\}$ converges to some  $a_{n_{2:4}}$ as $n_{1}\rightarrow\infty$ for each $n_{2:4}$, while $\{a_{n_{2:4}}\}$ converges 
to some $a$ (in some topology) as $n_{2:4}\rightarrow\infty$, i.e., as $n_{2},n_{3},n_{4}\rightarrow\infty$. 
As for the family of limits $\{a_{n_{2:4}}\}$ and the limit $a$, it is assumed that they  either have been previously introduced, or that they exist and are  being thus 
introduced by expression \eqref{limit}. \\
\noindent
Thereby, we can write two subsequent limit procedures in a compact form.

\end{Remark}

\noindent
From know on, we assume that the families of initial values $\{c_{\epsilon_{3}0}\},\{v_{\epsilon_{4}0}\}$  are  independent of $\epsilon_{1}$ and $\epsilon_{2}$ and satisfy \eqref{e34bnd}-\eqref{star}.  Recall that (compare \eqref{eqd}-\eqref{fe}) \eqref{eq1e} can be rewritten in the following form:
\begin{align}
 \partial_t c_{\epsilon}=\nabla\cdot q_{\epsilon}+f_{\epsilon},\label{eqd1}
\end{align}
where 
\begin{align}
&q_{\epsilon}:=\epsilon_{2}\nabla c_{\epsilon}+\frac{\kappa_c v_{\epsilon}c_{\epsilon}}{1+v_{\epsilon}c_{\epsilon}}\nabla c_{\epsilon}-\frac{2\kappa_v v_{\epsilon}^{\frac{1}{2}}c_{\epsilon}}{(1+v_{\epsilon})^2}\nabla v_{\epsilon}^{\frac{1}{2}},\nonumber\\
& f_{\epsilon}:=\mu_c c_{\epsilon} (1-c_{\epsilon}-\eta v_{\epsilon})-\epsilon_{1} c_{\epsilon}^{\theta}\nonumber
\end{align}
are the flux vector and the reaction term, respectively.

Owing to the estimates obtained in the preceding section,
there exist four sequences $$ \epsilon_{i,n_{i}}\underset{n_{i}\rightarrow\infty}{\rightarrow}0,\ i= 1,2,3,4,$$
such that:\\
due to \eqref{c2} and the Banach-Alaoglu theorem 
\begin{align}
&c_{\epsilon_{n_{1:4}}}\underset{n_{1}\rightarrow\infty}{\rightharpoonup}c_{n_{2:4}}\underset{n_{2:4}\rightarrow\infty}{\rightharpoonup}c\text{ in }L^{2}((0,T)\times\Omega);\label{cconv2}
\end{align}
due to \eqref{ui} and the Dunford-Pettis theorem
\begin{align}
&c_{\epsilon_{n_{1:4}}}^2\underset{n_{1}\rightarrow\infty}{\rightharpoonup}\tilde{c}_{n_{2:4}}^2\underset{n_{2:4}\rightarrow\infty}{\rightharpoonup}\tilde{c}^2\text{ in }L^{1}((0,T)\times\Omega);\label{c2conv}
\end{align}
due to \eqref{a2}, \eqref{vt} and the Lions-Aubin lemma
\begin{align}
  v_{\epsilon_{n_{1:4}}}^{\frac{1}{2}}\underset{n_{1}\rightarrow\infty}{\rightarrow}v^{\frac{1}{2}}_{n_{2:4}}\underset{n_{2:4}\rightarrow\infty}{\rightarrow} v^{\frac{1}{2}}\text{ in }L^{2}((0,T)\times\Omega);\label{vcomp}
\end{align}
due to \eqref{vcomp}
\begin{align}
 v_{\epsilon_{n_{1:4}}}^{\frac{1}{2}}\underset{n_{1}\rightarrow\infty}{\rightarrow}v^{\frac{1}{2}}_{n_{2:4}}\underset{n_{2:4}\rightarrow\infty}{\rightarrow} v^{\frac{1}{2}}\text{ a.e. in  }(0,T)\times\Omega;\label{vae}
\end{align}
due to \eqref{vae}
\begin{align}
 v_{\epsilon_{n_{1:4}}}\underset{n_{1}\rightarrow\infty}{\rightarrow}v_{n_{2:4}}\underset{n_{2:4}\rightarrow\infty}{\rightarrow} v\text{ a.e. in  }(0,T)\times\Omega;\label{vae_}
\end{align}
due to \eqref{hopf}, \eqref{vae_} and the  dominated convergence theorem
\begin{align}
 v_{\epsilon_{n_{1:4}}}^a\underset{n_{1}\rightarrow\infty}{\rightarrow}v_{n_{2:4}}^a\underset{n_{2:4}\rightarrow\infty}{\rightarrow} v^a\text{ in }L^{p}((0,T)\times\Omega)\text{ for all }a>0,\ p\geq1;\label{vconvp}
\end{align}
due to \eqref{cconv2}, \eqref{c2conv} and \eqref{vconvp}
\begin{eqnarray}
 f_{\epsilon_{n_{1:4}}}&=&\mu_c c_{\epsilon_{n_{1:4}}} \left(1-c_{\epsilon_{n_{1:4}}}-\eta v_{\epsilon_{n_{1:4}}}\right)-\epsilon_{1,n_{1}} c_{\epsilon_{n_{1:4}}}^{\theta}\nonumber\\
 &\underset{n_{1}\rightarrow\infty}{\rightharpoonup}&\mu_cc_{n_{2:4}}(1-\eta v_{n_{2:4}})-\mu_c \tilde{c}^2_{n_{2:4}}=:f_{n_{2:4}}\nonumber\\
 &\underset{n_{2:4}\rightarrow\infty}{\rightharpoonup}&\mu_cc(1-\eta v)-\mu_c \tilde{c}^2=:f\text{ in }L^{1}((0,T)\times\Omega);\label{fconv}
 \end{eqnarray}
due to \eqref{a5}, \eqref{vcomp} and the Banach-Alaoglu theorem
\begin{align}
 \nabla v_{\epsilon_{n_{1:4}}}^{\frac{1}{2}}\underset{n_{1}\rightarrow\infty}{\rightharpoonup}\nabla v_{n_{2:4}}^{\frac{1}{2}}\underset{n_{2:4}\rightarrow\infty}{\rightharpoonup}\nabla v^{\frac{1}{2}}\text{ in }L^{2}((0,T)\times\Omega);\label{nv2}
\end{align}
due to \eqref{nce}, \eqref{ct1} and a version of the Lions-Aubin Lemma \cite[Corollary 4]{Simon}
\begin{align}
 c_{\epsilon_{n_{1:4}}}\underset{n_{1}\rightarrow\infty}{\rightarrow}c_{n_{2:4}}\text{ in }L^{\frac43}((0,T)\times\Omega);\label{compcm}
\end{align}
due to \eqref{compcm}
\begin{align}
 c_{\epsilon_{n_{1:4}}}\underset{n_{1}\rightarrow\infty}{\rightarrow}c_{n_{2:4}}\text{ a.e. in  }(0,T)\times\Omega;\label{ce2ae}
\end{align}
due to \eqref{cconv2}-\eqref{c2conv} and  \eqref{compcm}
\begin{align}
 c_{\epsilon_{n_{1:4}}}\underset{n_{1}\rightarrow\infty}{\rightarrow}c_{n_{2:4}}=\tilde{c}_{n_{2:4}}\text{ a.e. in  }(0,T)\times\Omega;\label{cn}
\end{align}
due to \eqref{nce}, \eqref{compcm} and the Banach-Alaoglu theorem
\begin{align}
 \nabla c_{\epsilon_{n_{1:4}}}\underset{n_{1}\rightarrow\infty}{\rightharpoonup}\nabla c_{n_{2:4}}\text{ in }L^{\frac{4}{3}}((0,T)\times\Omega);\label{ncconv}
\end{align}
due to \eqref{estgru}, \eqref{uet} and a version of the Lions-Aubin Lemma \cite[Corollary 4]{Simon}
\begin{align}
 \ln\left(1+v_{n_{2:4}}^{\frac{1}{2}}c_{n_{2:4}}\right)\underset{n_{2:4}\rightarrow\infty}{\rightarrow} u\text{ in }L^{\frac{4}{3}}((0,T)\times\Omega);\label{ucomp2}
\end{align}
due to \eqref{ucomp2}
\begin{align}
 \ln\left(1+v_{n_{2:4}}^{\frac{1}{2}}c_{n_{2:4}}\right)\underset{n_{2:4}\rightarrow\infty}{\rightarrow} u\text{ a.e. in  }(0,T)\times\Omega;\label{uae}
\end{align}
due to \eqref{uae},
\begin{align}
 v_{n_{2:4}}^{\frac{1}{2}}c_{n_{2:4}}\underset{n_{2:4}\rightarrow\infty}{\rightarrow} e^u-1=:w\text{ a.e. in  }(0,T)\times\Omega;\label{wae}
\end{align}
due to \eqref{cconv2}-\eqref{c2conv}, \eqref{vae_}, \eqref{wae} and the Lions lemma \cite[Lemma 1.3]{Lions}
\begin{align}
 c_{n_{2:4}}\underset{n_{2:4}\rightarrow\infty}{\rightarrow}c=\tilde{c}=\frac{w}{v^{\frac{1}{2}}}\text{ a.e. in  }\{v>0\};\label{ae}
\end{align}
due to \eqref{wae}-\eqref{ae}
\begin{align}
 v_{n_{2:4}}^{\frac{1}{2}}c_{n_{2:4}}\underset{n_{2:4}\rightarrow\infty}{\rightarrow}v^{\frac{1}{2}}c\text{ a.e. in  }(0,T)\times\Omega;\label{w}
\end{align}
due to \eqref{wui1}, \eqref{w} and the Vitali convergence theorem 
\begin{align}
 v_{n_{2:4}}^{\frac{1}{2}}c_{n_{2:4}}\underset{n_{2:4}\rightarrow\infty}{\rightarrow}v^{\frac{1}{2}}c\text{ in }L^{2}((0,T)\times\Omega);\label{ta1}
\end{align}
due to \eqref{ta1} and $w\mapsto\ln(1+w)$ being a Lipschitz function in $\R_0^+$
\begin{align}
 \ln\left(1+v_{n_{2:4}}^{\frac{1}{2}}c_{n_{2:4}}\right)\underset{n_{2:4}\rightarrow\infty}{\rightarrow}\ln\left(1+v^{\frac{1}{2}}c\right)\text{ in }L^{2}((0,T)\times\Omega);\label{ta1_}
\end{align}
due to \eqref{estgru}, \eqref{ta1_} and the Banach-Alaoglu theorem
\begin{align}
 \nabla \ln\left(1+v_{n_{2:4}}^{\frac{1}{2}}c_{n_{2:4}}\right)\underset{n_{2:4}\rightarrow\infty}{\rightharpoonup}\nabla \ln\left(1+v^{\frac{1}{2}}c\right)\text{ in }L^{\frac{4}{3}}((0,T)\times\Omega);\label{nu43}
\end{align}
due to \eqref{vae}, \eqref{cn} and \eqref{w},
\begin{align}
 \frac{2v_{\epsilon_{n_{1:4}}}^{\frac{1}{2}}c_{\epsilon_{n_{1:4}}}}{\left(1+v_{\epsilon_{n_{1:4}}}\right)^2}\underset{n_{1}\rightarrow\infty}{\rightarrow}\frac{2 v_{n_{2:4}}^{\frac{1}{2}}c_{n_{2:4}}}{(1+v_{n_{2:4}})^2}\underset{n_{2:4}\rightarrow\infty}{\rightarrow}\frac{2 v^{\frac{1}{2}}c}{(1+v)^2}\text{ a.e. in }(0,T)\times\Omega;\label{ta4}
\end{align}
due to \eqref{wui1}, \eqref{ta4}, $\frac{1}{(1+v)^2}\leq 1$ for $v\in \R^+_0$ and the Vitali convergence theorem 
\begin{align}
 \frac{2v_{\epsilon_{n_{1:4}}}^{\frac{1}{2}}c_{\epsilon_{n_{1:4}}}}{\left(1+v_{\epsilon_{n_{1:4}}}\right)^2}\underset{n_{1}\rightarrow\infty}{\rightarrow}\frac{2 v_{n_{2:4}}^{\frac{1}{2}}c_{n_{2:4}}}{(1+v_{n_{2:4}})^2}\underset{n_{2:4}\rightarrow\infty}{\rightarrow}\frac{2 v^{\frac{1}{2}}c}{(1+v)^2}\text{ in }L^{2}((0,T)\times\Omega);\label{ta5}
\end{align}
due to \eqref{nv2}, \eqref{ta5} and the well-known result on weak-strong convergence for member-by-member products
\begin{align}
 \frac{2  v_{\epsilon_{n_{1:4}}}^{\frac{1}{2}}c_{\epsilon_{n_{1:4}}}}{\left(1+v_{\epsilon_{n_{1:4}}}\right)^2}\nabla v_{\epsilon_{n_{1:4}}}^{\frac{1}{2}}\underset{n_{1}\rightarrow\infty}{\rightharpoonup}\frac{2 v_{n_{2:4}}^{\frac{1}{2}}c_{n_{2:4}}}{(1+v_{n_{2:4}})^2}\nabla v^{\frac{1}{2}}_{n_{2:4}}\underset{n_{2:4}\rightarrow\infty}{\rightharpoonup}&\frac{2 v^{\frac{1}{2}}c}{(1+v)^2}\nabla v^{\frac{1}{2}}
\text{ in }L^1((0,T)\times\Omega);\label{tcomp}
\end{align}
similarly, due to \eqref{vae_}, \eqref{cn}-\eqref{ncconv}, $\frac{vc}{1+vc}\leq1$, the dominated convergence theorem and the result on weak-strong convergence for member-by-member products
\begin{align}
 \frac{v_{\epsilon_{n_{1:4}}}c_{\epsilon_{n_{1:4}}}}{1+v_{\epsilon_{n_{1:4}}}c_{\epsilon_{n_{1:4}}}}\nabla c_{\epsilon_{n_{1:4}}}\underset{n_{1}\rightarrow\infty}{\rightharpoonup}&\frac{v_{n_{2:4}}c_{n_{2:4}}}{1+v_{n_{2:4}}c_{n_{2:4}}}\nabla c_{n_{2:4}}\text{ in }L^{\frac{4}{3}}((0,T)\times\Omega);\label{dcompn}
\end{align}
due to \eqref{dif2}
\begin{align}
 \frac{v_{n_{2:4}}c_{n_{2:4}}}{1+v_{n_{2:4}}c_{n_{2:4}}}\nabla c_{n_{2:4}}\underset{n_{2:4}\rightarrow\infty}{\rightharpoonup}&d\text{ in }L^{\frac{4}{3}}((0,T)\times\Omega);\label{dcomp}
\end{align}
due to \eqref{dif2_}
\begin{align}
 \frac{v_{n_{2:4}}^{\frac{3}{4}}c_{n_{2:4}}}{1+v_{n_{2:4}}c_{n_{2:4}}}\nabla c_{n_{2:4}}\underset{n_{2:4}\rightarrow\infty}{\rightharpoonup}&d_1\text{ in }L^{\frac{8}{7}}((0,T)\times\Omega);\label{dcomp_}
\end{align}
due to \eqref{vconvp}, \eqref{dcomp_} and the  result on weak-strong convergence for member-by-member products
\begin{eqnarray}
 \frac{v_{n_{2:4}}c_{n_{2:4}}}{1+v_{n_{2:4}}c_{n_{2:4}}}\nabla c_{n_{2:4}}&=&v_{n_{2:4}}^{\frac{1}{4}}\frac{v_{n_{2:4}}^{\frac{3}{4}}c_{n_{2:4}}}{1+v_{n_{2:4}}c_{n_{2:4}}}\nabla c_{n_{2:4}}\nonumber\\
 &\underset{n_{2:4}\rightarrow\infty}{\rightharpoonup}&0\text{ in }L^{1}(((0,T)\times\Omega)\cap\{v=0\});\label{dcomp0}
 \end{eqnarray}
due to \eqref{vae_}, \eqref{nv2}, \eqref{ae},  \eqref{dcomp} and \eqref{nu43} and {\it Lemma~\ref{LemA2}} from Appendix 
\begin{align}
 \frac{v_{n_{2:4}}c_{n_{2:4}}}{1+v_{n_{2:4}}c_{n_{2:4}}}\nabla c_{n_{2:4}}
 =&\frac{v_{n_{2:4}}^{\frac{1}{2}}c_{n_{2:4}}}{1+v_{n_{2:4}}c_{n_{2:4}}}\nabla\left( v_{n_{2:4}}^{\frac{1}{2}} c_{n_{2:4}}\right)-\frac{v_{n_{2:4}}^{\frac{1}{2}}c_{n_{2:4}}}{1+v_{n_{2:4}}c_{n_{2:4}}}c_{n_{2:4}}\nabla v_{n_{2:4}}^{\frac{1}{2}}\nonumber\\
 =&\frac{v_{n_{2:4}}^{\frac{1}{2}}c_{n_{2:4}}\left(1+v_{n_{2:4}}^{\frac{1}{2}} c_{n_{2:4}}\right)}{1+v_{n_{2:4}}c_{n_{2:4}}}\nabla  \ln\left(1+v_{\epsilon}^{\frac{1}{2}}c_{\epsilon}\right)-\frac{v_{n_{2:4}}^{\frac{1}{2}}c_{n_{2:4}}^2}{1+v_{n_{2:4}}c_{n_{2:4}}}\nabla v_{n_{2:4}}^{\frac{1}{2}}\nonumber\\
 \underset{n_{2:4}\rightarrow\infty}{\rightharpoonup}&\frac{v^{\frac{1}{2}}c\left(1+v^{\frac{1}{2}}c\right)}{1+vc}\nabla \ln\left(1+v^{\frac{1}{2}}c\right)-\frac{v^{\frac{1}{2}}c^2}{1+vc}\nabla v^{\frac{1}{2}}\nonumber\\
 =&\frac{v^{\frac{1}{2}}c}{1+vc}\left(\left(1+v^{\frac{1}{2}}c\right)\nabla \ln\left(1+v^{\frac{1}{2}}c\right)-c\nabla v^{\frac{1}{2}}\right)\text{ in }L^1(((0,T)\times\Omega)\cap\{v>0\});\label{dcompn0}
 \end{align}
due to \eqref{dcomp0}-\eqref{dcompn0}
\begin{align}
 \frac{v_{n_{2:4}}c_{n_{2:4}}}{1+v_{n_{2:4}}c_{n_{2:4}}}\nabla c_{n_{2:4}}\underset{n_{2:4}\rightarrow\infty}{\rightharpoonup}&\frac{v^{\frac{1}{2}}c}{1+vc}\left(\left(1+v^{\frac{1}{2}}c\right)\nabla \ln\left(1+v^{\frac{1}{2}}c\right)-c\nabla v^{\frac{1}{2}}\right)\text{ in }L^1((0,T)\times\Omega);\label{d_comp}
\end{align}
due to \eqref{este2}, \eqref{tcomp}-\eqref{dcompn} and \eqref{d_comp}
\begin{eqnarray}
 q_{\epsilon_{n_{1:4}}}&=&\epsilon_{2,n_{2}}\nabla c_{\epsilon_{n_{1:4}}}+\frac{\kappa_c v_{\epsilon_{n_{1:4}}}c_{\epsilon_{n_{1:4}}}}{1+v_{\epsilon_{n_{1:4}}}c_{\epsilon_{n_{1:4}}}}\nabla c_{\epsilon_{n_{1:4}}}-\frac{2\kappa_v v_{\epsilon_{n_{1:4}}}^{\frac{1}{2}}c_{\epsilon_{n_{1:4}}}}{\left(1+v_{\epsilon_{n_{1:4}}}\right)^2}\nabla v_{\epsilon_{n_{1:4}}}^{\frac{1}{2}}\nonumber\\
 &\underset{n_{1}\rightarrow\infty}{\rightharpoonup}&\epsilon_{2,n_{2}}\nabla c_{n_{2:4}}+\frac{\kappa_c v_{n_{2:4}}c_{n_{2:4}}}{1+v_{n_{2:4}}c_{n_{2:4}}}\nabla c_{n_{2:4}}-\frac{2\kappa_v v_{n_{2:4}}^{\frac{1}{2}}c_{n_{2:4}}}{(1+v_{n_{2:4}})^2}\nabla v_{n_{2:4}}^{\frac{1}{2}}=:q_{n_{2:4}}\nonumber\\
 &\underset{n_{2:4}\rightarrow\infty}{\rightharpoonup}&\frac{v^{\frac{1}{2}}c}{1+vc}\left(\left(1+v^{\frac{1}{2}}c\right)\nabla \ln\left(1+v^{\frac{1}{2}}c\right)-c\nabla v^{\frac{1}{2}}\right)-\frac{2 v^{\frac{1}{2}}c}{(1+v)^2}\nabla v^{\frac{1}{2}}\nonumber\\
 &=:&q\text{ in }L^{1}((0,T)\times\Omega);\label{qcomp}
\end{eqnarray}
due to \eqref{eqd1}, \eqref{cconv2}, \eqref{fconv} and \eqref{qcomp} 
\begin{align}
  &\left<\partial_t c_{n_{2:4}},\varphi\right>=-(q_{n_{2:4}},\nabla\varphi)+(f_{n_{2:4}},\varphi)\text{ a.e.  in }\R^+\text{ for all }\varphi\in W^{1,\infty}(\Omega),\label{eqlimn}\\
 &\left<\partial_t c,\varphi\right>=-(q,\nabla\varphi)+(f,\varphi)\text{ a.e.  in }\R^+\text{ for all }\varphi\in W^{1,\infty}(\Omega),\label{eqlim}
\end{align} 
and the limiting identities \eqref{eqlimn}-\eqref{eqlim} have in $L^1(0,T;W^{-1,1}(\Omega))$ the form  
\begin{align}
\partial_t c_{n_{2:4}}=&\epsilon_{2,n_{2}}\Delta c_{n_{2:4}}+\nabla\cdot\left(\frac{\kappa_c v_{n_{2:4}}c_{n_{2:4}}}{1+v_{n_{2:4}}c_{n_{2:4}}}\nabla c_{n_{2:4}}-\frac{\kappa_v c_{n_{2:4}}}{(1+v_{n_{2:4}})^2}\nabla v_{n_{2:4}}\right)+\mu_cc_{n_{2:4}}(1-\eta v_{n_{2:4}})-\mu_c c_{n_{2:4}}^2,\label{limeqn}\\
\partial_t c=&\nabla\cdot\left(\frac{\kappa_c v^{\frac{1}{2}}c}{1+vc}\left(\left(1+v^{\frac{1}{2}}c\right)\nabla \ln\left(1+v^{\frac{1}{2}}c\right)-c\nabla v^{\frac{1}{2}}\right)-\frac{\kappa_v c}{(1+v)^2}\nabla v\right)+\mu_cc(1-\eta v)-\mu_c \tilde{c}^2.\label{limeq}
\end{align}
Further, using \eqref{a8}, \eqref{cconv2}, \eqref{vconvp}, and the fact that equations \eqref{v} and \eqref{v2} are equivalent, we obtain that
\begin{align}
 &\partial_t v_{n_{2:4}}=\mu_vv_{n_{2:4}}(1-v_{n_{2:4}})-\lambda v_{n_{2:4}}c_{n_{2:4}}\text{ a.e. in }\R^+\times\Omega,\label{vn}\\
 &\partial_t v=\mu_vv(1-v)-\lambda vc\text{ a.e. in }\R^+\times\Omega.\label{v_}
\end{align}
For the initial data, we have with \eqref{dpsi}, and \eqref{coe}-\eqref{voe} that
\begin{align}
 &c_{n_{3}0}:=c_{n_{2:4}}(0)=c_{\epsilon_{n_{1:4}0}}\underset{n_{3}\rightarrow\infty}{\rightarrow}c_0\text{ in }L^1(\Omega)\text{ and a.e. in }\Omega,\label{ck0}\\
 &v_{n_{4}0}^{\frac{1}{2}}:=v_{n_{2:4}}^{\frac{1}{2}}(0)=v_{\epsilon_{n_{1:4}}0}^{\frac{1}{2}}\underset{n_{4}\rightarrow\infty}{\rightarrow}v^{\frac{1}{2}}_0\text{ in }\LTwo.\nonumber
\end{align}
\subsection*{Passing to the limit on \texorpdfstring{$\{v=0\}$}{v=0}:  $c=\tilde{c}$}
With equation \eqref{limeq} we have nearly regained \eqref{c}. However, we still have to check that $c$ and $\tilde{c}$ coincide  a.e. Thanks to \eqref{ae}, it remains to justify that $c=\tilde{c}$ a.e. in  $\{v=0\}$. Observe that this is not obvious since $c$ and $\tilde{c}^2$ are just weak limits of $c_{n_{2:4}}$ and $c_{n_{2:4}}^2$, respectively. 

\noindent
Let us first prove that each level set $\{v_{n_{2:4}}=0\}$ differs from the cylinder $\R_0^+\times \{v_{n_{4}0}=0\}$ by a null set. Indeed, let us divide both sides of the ODE \eqref{vn} by $v_{n_{2:4}}$ and integrate over $(0,t)$ for arbitrary $t\in\R^+$. We obtain that
\begin{align}
 \ln(v_{n_{2:4}}(t))-\ln(v_{n_{4}0}))=&\int_{0}^t\mu_v(1-v_{n_{2:4}})\,dt-\lambda \int_{0}^tc_{n_{2:4}}\,dt.\label{lnv}
\end{align}
Since $0\leq v_{n_{2:4}}\leq1$ and $c_{n_{2:4}}\in L^1((0,T)\times\Omega)$ for all $T\in\R^+$, the right-hand side of \eqref{lnv} is finite a.e. in $\Omega$. Hence, the same holds for the left-hand side of \eqref{lnv}. But this means that for all $t\in\R^+$ it holds that 
\begin{align}
   &v_{n_{2:4}}(t)>0 \text{ a.e. in }\{v_{n_{4}0}>0\},\nonumber\\
   &v_{n_{2:4}}(t)=0 \text{ a.e. in }\{v_{n_{4}0}=0\}.\nonumber
\end{align}
Similarly, we obtain from \eqref{v_} that
\begin{align}
   &v(t)>0 \text{ a.e. in }\{v_0>0\},\label{vpos}\\
   &v(t)=0 \text{ a.e. in }\{v_0=0\}.\nonumber
\end{align}

\noindent
Combining \eqref{ae} and \eqref{vpos}, we conclude that
\begin{align}
 c=\tilde{c}\text{ a.e. in }\R^+\times\{v_0>0\}.
\end{align}
It thus remains to consider $c$ and $\tilde{c}$ in the cylinder $\R^+\times\{v_0=0\}$ for the case when  $$|\{v_0=0\}|\neq 0.$$
We conclude from \eqref{limeqn} and \eqref{ck0} that $c_{n_{2:4}}$ solves
\begin{subequations}\label{lim10}
\begin{alignat}{3}
& \partial_t c_{n_{2:4}}=\epsilon_{2,n_{2}}\Delta c_{n_{2:4}}+\mu_cc_{n_{2:4}}-\mu_c c_{n_{2:4}}^2&&\text{ in }\R^+\times \inner\{v_{n_{4}0}=0\},\\
&c_{n_{2:4}}(0)=c_{n_{3}0}&&\text{ in }\inner\{v_{n_{4}0}=0\}.
\end{alignat}
\end{subequations}
Since $c_{n_{3}0}$ is smooth, $c_{n_{2:4}}$ is a classical solution to \eqref{lim10}. Differentiating \eqref{lim10} with respect to $x_i$, $i\in1:N$, we obtain that
\begin{align}
& \partial_t \partial_{x_i} c_{n_{2:4}}=\epsilon_{2,n_{2}}\Delta\partial_{x_i}  c_{n_{2:4}}+\mu_c(1-c_{n_{2:4}})\partial_{x_i} c_{n_{2:4}}.\label{nlim10}
\end{align}
Let now $\varphi$ be some smooth cut-off function with $\supp\varphi\subset \inner\{v_{n_{4}0}=0\}$. Multiplying \eqref{nlim10} by $\frac{4}{3}\varphi^2|\partial_{x_i} c_{n_{2:4}}|^{-\frac{2}{3}}\partial_{x_i} c_{n_{2:4}}$ and integrating by parts over $\Omega$, we obtain by the H\"older and Young inequalities that
\begin{align}
 \frac{d}{dt}\left\|\varphi|\partial_{x_i} c_{n_{2:4}}|^{\frac{2}{3}}\right\|^2=&-\epsilon_{2,n_{2}}\left\|\varphi\nabla|\partial_{x_i} c_{n_{2:4}}|^{\frac{2}{3}}\right\|^2-4\epsilon_{2,n_{2}}\left(\varphi\nabla|\partial_{x_i} c_{n_{2:4}}|^{\frac{2}{3}},|\partial_{x_i} c_{n_{2:4}}|^{\frac{2}{3}}\nabla\varphi\right)\nonumber\\
 &+\frac{4}{3}\left(\mu_c(1- c_{n_{2:4}}),\varphi^2|\partial_{x_i} c_{n_{2:4}}|^{\frac{4}{3}}\right)\nonumber\\
 \leq&\frac{4}{3}\mu_c\left\|\varphi|\partial_{x_i} c_{n_{2:4}}|^{\frac{2}{3}}\right\|^2+\C||\nabla\varphi||_{\infty}\epsilon_{2,n_{2}}\left\|\partial_{x_i} c_{n_{2:4}}\right\|^{\frac{4}{3}}_{\frac{4}{3}}.\label{43}
\end{align}
Together with \eqref{nce} and the Gronwall lemma,  \eqref{43} yields that 
\begin{align}
 \left\|\varphi|\partial_{x_i} c_{n_{2:4}}|^{\frac{2}{3}}\right\|^2\leq&\Cl{C50}(T)\left\|\varphi|\partial_{x_i} c_{n_{3}0}|^{\frac{2}{3}}\right\|^2+\Cr{C50}(T)||\nabla\varphi||_{\infty}\epsilon_{2,n_{2}}\int_0^T\left\|\partial_{x_i} c_{n_{2:4}}\right\|^{\frac{4}{3}}_{\frac{4}{3}}\,dt\nonumber\\
 \leq&\C\left(T,||\nabla\varphi||_{\infty},\left\|\partial_{x_i} c_{n_{3}0}\right\|_{L^{\frac{4}{3}}(\supp\varphi)}\right).\nonumber
\end{align}
Therefore, for all compacts $K\subset \inner\{v_{n_{4}0}=0\}$ it holds that
\begin{align}
 \sup_{t\in[0,T]}\left\|\nabla c_{n_{2:4}}(t)\right\|_{L^{\frac{4}{3}}(K)}\leq \C\left(T,\dist^{-1}(\partial K,\partial \inner\{v_{n_{4}0}=0\}),\left\|\partial_{x_i} c_{n_{3}0}\right\|_{L^{\frac{4}{3}}(\inner\{v_{n_{4}0}=0\})}\right).\label{nc24}
\end{align}
Combining \eqref{ct1} and \eqref{nc24}, we conclude using a version of the Lions-Aubin Lemma \cite[Corollary 4]{Simon}   that
\begin{align}
&c_{n_{2:4}}\underset{n_{2}\rightarrow\infty}{\rightarrow}c_{n_{3:4}}\text{ in }L^{\frac{4}{3}}(0,T;L^{\frac{4}{3}}_{loc}(\inner\{v_{n_{4}0}=0\})).\nonumber
\end{align}
We may therefore assume that 
\begin{align}
&c_{n_{2:4}}\underset{n_{2}\rightarrow\infty}{\rightarrow}c_{n_{3:4}}\text{ a.e. in }(0,T)\times \inner\{v_{n_{4}0}=0\}.\label{convc24c}
\end{align}
Combining \eqref{c2conv} and \eqref{convc24c}, we conclude with the Vitali convergence theorem that
\begin{align}
&c_{n_{2:4}}^2\underset{n_{2}\rightarrow\infty}{\rightarrow}c_{n_{3:4}}^2\text{ in }L^{1}(0,T;L^{1}(\inner\{v_{n_{4}0}=0\})).\nonumber
\end{align}
Consequently, we may pass to the limit in the distributional sense as $n_{2}\rightarrow\infty$ in \eqref{nlim10}  and obtain that $c_{n_{3:4}}$ solves 
\begin{alignat}{3}
& \partial_t c_{n_{3:4}}=\mu_cc_{n_{3:4}}-\mu_c c_{n_{3:4}}^2&&\text{ in }(0,T)\times \inner\{v_{n_{4}0}=0\}\label{limc34},\\
&c_{n_{3:4}}(0)=c_{n_{3}0}&&\text{ in }\inner\{v_{n_{4}0}=0\}.\label{limc34ini}
\end{alignat}
Since for an ODE with smooth coefficients the dependence of solutions upon initial data is continuous, we obtain with \eqref{ck0} and \eqref{limc34}-\eqref{limc34ini} that 
\begin{align}
 c_{n_{3:4}}\underset{n_{3}\rightarrow\infty}{\rightarrow}\bar{c}\text{ a.e. in }(0,T)\times \inner\{v_{n_{4}0}=0\},\label{conv34_}
\end{align}
where $\bar{c}$ solves
\begin{subequations}
\begin{alignat}{3}
& \partial_t \bar{c}=\mu_c\bar{c}-\mu_c \bar{c}^2&&\text{ in }(0,T)\times \Omega,\nonumber\\
&\bar{c}(0)=c_0&&\text{ a.e. in }\Omega.\nonumber
\end{alignat}
\end{subequations}
Combining \eqref{cconv2} and \eqref{conv34_} with  Lions lemma \cite[Lemma 1.3]{Lions}, we conclude that
\begin{align}
 c_{n_4}=\bar{c}\text{ a.e. in }(0,T)\times \inner\{v_{n_{4}0}=0\}.\label{conv34}
\end{align}
Together with \eqref{star}, \eqref{conv34} yields that
\begin{align}
 |\{c_{n_{4}}\neq\bar{c}\}\cap((0,T)\times\{v_0=0\})|\leq \epsilon_{4,n_{4}}T\underset{n_{4}\rightarrow\infty}{\rightarrow}0,\nonumber
\end{align}
so that
\begin{align}
&c_{n_{4}}\underset{n_{4}\rightarrow\infty}{\rightarrow}\bar{c}\text{ in $(0,T)\times\{v_0=0\}$ in the measure},\label{comas1}\\
&c_{n_{4}}^2\underset{n_{4}\rightarrow\infty}{\rightarrow}\bar{c}^2\text{ in $(0,T)\times\{v_0=0\}$ in the measure}.\label{comas2}
\end{align}
Combining \eqref{cconv2} and \eqref{comas1}, we conclude with the Lions lemma \cite[Lemma 1.3]{Lions} that 
 \begin{align}
c=\bar{c} \text{ a.e. in }(0,T)\times \{v_0=0\}.\nonumber
\end{align}
 Similarly, we obtain with 
 \eqref{c2conv} and \eqref{comas2} using the Vitali convergence theorem that  
 \begin{align}
\tilde{c}^2=\bar{c}^2 \text{ a.e. in }(0,T)\times \{v_0=0\}.\nonumber
\end{align} Thus, we finally arrive at
\begin{align}
c=\tilde{c}=\bar{c} \text{ a.e. in }(0,T)\times \{v_0=0\}.\nonumber
\end{align}
This concludes the global existence proof.
\section{Numerical simulations}\label{numerics}
In this section we perform numerical simulations of the system \eqref{hapto} for $N=2$ and $\Omega = (0,1)^2$. All simulations are performed via MATLAB and 
the cell-centered unstructured triangular mesh generation is implemented via the DistMesh MATLAB function package \cite{PPOSG04}. In order to obtain the numerical 
solution, we employ for the space discretization the Finite Volume Method (see e.g., \cite{eymard2000finite, BTOM04}). Due to the high nonlinearity of the system \eqref{hapto}, 
the time discretization is implemented via an explicit one-step Euler method. 
 
\subsection{Implementation}
In order to advance the piecewise constant solution $c^{(k)}\rightarrow c^{(k+1)}$ from the time level $k\in\mathbb{N}_0$ to $k+1$ we employ operator 
splitting and advance the solution with haptotactic and diffusion-reaction terms separately. Thus, the operator splitting consists of two steps:

\noindent
\textbf{Step 1}: $c^{(k)}\rightarrow c^{*}$ solving the advection problem $\partial_t c = -\nabla\cdot\left(\frac{\kappa_vc}{(1+v)^2}\nabla v\right)$ for one 
time step $\triangle t$, using $c^{(k)}$ as the initial value. We use a monotone E-flux scheme, such as the Godunov method (see e.g., \cite{BTOM04}), which is given 
by
	\begin{align*}
	c^{*}_i = c^{(k)}_i - \frac{\triangle t}{\vert\Omega_i\vert}\left( \sum\limits_{j\in A(i)}\vert\partial\Omega_{ij}\vert E_{\overrightarrow{n_{ij}}}\left(c^{(k)}_i,c^{(k)}_j\right)\right),
	\end{align*}    
	where
	\begin{itemize}
	\item $c^{(k)}_i = \frac{1}{\vert\Omega_i\vert}\int_{\Omega_i}c^{(k)}$ is the average value of the piecewise constant 
	solution $c^{(k)}$ over the triangle $\Omega_i$ (with tessellation $\bigcup_{i\in I}\Omega_i=\Omega$, $I$ being an index set) at the time level $k$,
	\item $A(i)$ is an index set of the neighboring triangles of $\Omega_i$,
	\item $\partial\Omega_{ij}$ is the boundary edge between triangles $\Omega_i$ and $\Omega_j$,
	\item $E_{\overrightarrow{n_{ij}}}\left(c^{(k)}_i,c^{(k)}_j\right)$ is the Godunov flux from $\Omega_i$ to $\Omega_j$, $\overrightarrow{n_{ij}}$ is the outward unit normal, 
	pointing out of $\Omega_i$ and into $\Omega_j$. 
	\end{itemize}
	The Godunov flux is given by:\\
	\[
	    E_{\overrightarrow{n_{ij}}}\left(c^{(k)}_i,c^{(k)}_j\right)= 
	\begin{cases}
	    \min\limits_{u\in \left[c^{(k)}_i,c^{(k)}_j\right]}f(u)n_x+g(u)n_y,& \text{if } c^{(k)}_i\leq c^{(k)}_j\\
	    \max\limits_{u\in \left[c^{(k)}_j,c^{(k)}_i\right]}f(u)n_x+g(u)n_y,& \text{otherwise.}
	\end{cases}
	\]
	Thereby, $n_x$ and $n_y$ denote the $x$ and $y$ components of the unit normal $\overrightarrow{n_{ij}}$, respectively. The functions $f$ and $g$ are given by:
	\begin{align*}
	f(u) &= u\cdot \frac{\kappa_v}{(1+v^{(k)})^2}\partial_xv^{(k)}\rvert_{\partial\Omega_{ij}}\\
	g(u) &= u\cdot \frac{\kappa_v}{(1+v^{(k)})^2}\partial_yv^{(k)}\rvert_{\partial\Omega_{ij}},
	\end{align*}  
	where
	\begin{align*}
	\partial_xv^{(k)}\rvert_{\partial\Omega_{ij}}& = \frac{v^{(k)}_j-v^{(k)}_i}{\vert x_j-x_i\vert},\\
	\partial_yv^{(k)}\rvert_{\partial\Omega_{ij}} &= \frac{v^{(k)}_j-v^{(k)}_i}{\vert y_j-y_i\vert},\\
	{\frac{\kappa_v}{(1+v^{(k)})^2}}\rvert_{\partial\Omega_{ij}} &= \frac{1}{2}\left(\frac{\kappa_v}{(1+v_i)^2}+\frac{\kappa_v}{(1+v_j)^2}  \right),
	\end{align*}   
	with $(x_i,y_i)$ being cell center coordinates of triangle $\Omega_i$, $v^{(k)}_i$ is cell average at the time level $k$ defined similarly as above.

\noindent	
	\textbf{Step 2}: $c^{*}\rightarrow c^{(k+1)}$ solving the reaction-diffusion problem $\partial_t c = \nabla \cdot\left(\frac{\kappa_cvc}{1+vc}\nabla c\right) 
	+ \mu_cc(1-c-\eta v)$ for one time step $\triangle t$, thereby using $c^{(*)}$ as the initial value. The scheme is given by
		\begin{align*}
		c^{k+1}_i = c^{*}_i + \frac{\triangle t}{\vert\Omega_i\vert}
		\left( \sum\limits_{j\in A(i)}\vert\partial\Omega_{ij}\vert D_{\overrightarrow{n_{ij}}}\left(c^{(*)}_i,c^{(*)}_j\right)\right) + \triangle t P^{(*)}_i,
		\end{align*}
	 	where
	 	\begin{align*}
	 	D_{\overrightarrow{n_{ij}}}\left(c^{(*)}_i,c^{(*)}_j\right) &= \frac{\kappa_cv^{(k)}c^{(*)}}{1+v^{(k)}c^{(*)}}
	 	\left( \partial_xc^{(*)}n_x+\partial_yc^{(*)}n_y\right)\rvert_{\partial\Omega_{ij}},\\
	 	P^{(*)}_i &= \mu_cc^{(*)}_i\left(1-c^{(*)}_i-\eta v^{(k)}_i\right).
	 	\end{align*}
		Here the function evaluations at the boundary edge $\partial\Omega_{ij}$ are approximated similarly as above. 

\noindent		
The solution $v^{(k+1)}_i$ is obtained by using one-step time marching: 		
\begin{align*}
v^{(k+1)}_i &= v^{(k)}_i +\triangle t\mu_vv^{(k)}_i\left(1-v^{(k)}_i\right)-\triangle t\lambda v^{(k)}_ic^{(k)}_i.
\end{align*}

\noindent
We simulate the initial ECM density by uniformly distributed random numbers on the interval $(0,1)$, i.e., we have:
\begin{align*}
v_0(x,y) \sim \mathcal{U}(0,1),\phantom{+} (x,y)\in\Omega.
\end{align*} 
The initial tumor cell density is given by the following:
\begin{align*}
c_0(x,y) = exp\left(-\frac{(x-0.5)^2+(y-0.5)^2)}{2\epsilon^2}\right), \phantom{+} (x,y)\in\Omega,
\end{align*}  
where we took $\epsilon=0.08$. That is, $c_0$ is a bell-shaped curve centered at $(0.5,0.5)$. The plots of $c_0$ and $v_0$ are given in {\it Figure \ref{fig:initial}}.

\begin{figure}[h]

\begin{subfigure}{0.5\textwidth}
\centering
\includegraphics[width=0.7\linewidth, height=5cm]{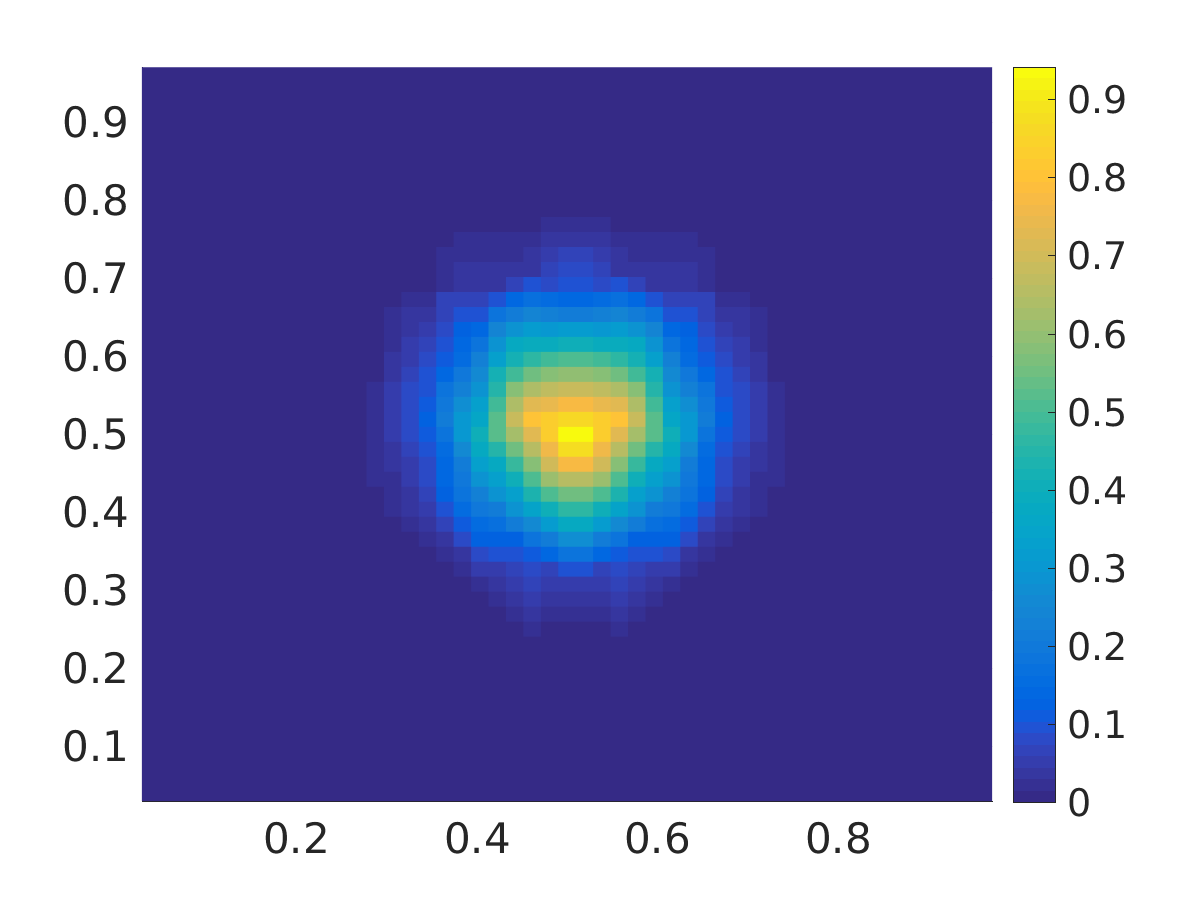} 
\caption{Initial cancer cell density $c_0$}
\end{subfigure}
\begin{subfigure}{0.5\textwidth}
\centering
\includegraphics[width=0.7\linewidth, height=5cm]{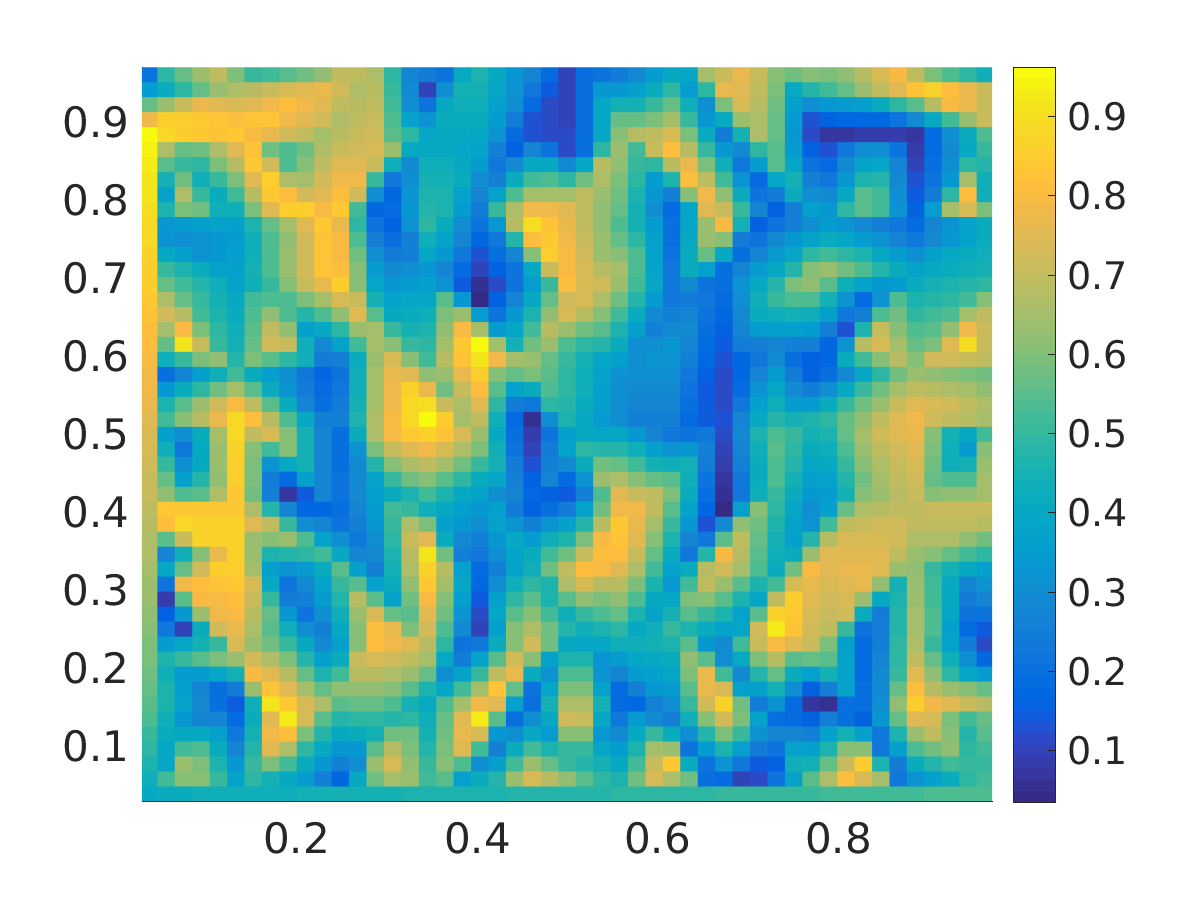}
\caption{Initial density $v_0$ of tissue fibers}
\end{subfigure}
 
\caption{Initial conditions}
\label{fig:initial}
\end{figure} 

\noindent
The values of the model parameters used for solving the system \eqref{hapto} are given below:
\begin{align*}
\kappa_c &= 10^{-3}, &\kappa_v &= 1,	&\mu_c &= 0.5, &\mu_v&=0.02, &\lambda &=0.1. 
\end{align*}
\noindent
These parameter values are in agreement with those estimated in \cite{JMGL06}. Since cancer cells grow much faster than healthy tissue can be restructured, 
$\mu_v$ was taken to be a 
fraction of the cancer cell proliferation rate $\mu_c$.

\noindent
We also perform numerical simulations for a version of the Equation \eqref{c} with nondegenerate diffusion:
\begin{align*}
&\partial_t \tilde{c}=\nabla\cdot\left(\frac{\kappa_c }{1+\tilde{v}\tilde{c}}\nabla \tilde{c}-\frac{\kappa_v \tilde{c}}{(1+\tilde{v})^2}\nabla \tilde{v}\right)+\mu_c\tilde{c}(1-\tilde{c}-\eta \tilde{v})&&\text{ in }\R^+\times\Omega,\\
&\partial_t \tilde{v}=\mu_v\tilde{v}(1-\tilde{v})-\lambda \tilde{v}\tilde{c}&&\text{ in }\R^+\times\Omega,\\
&\frac{\kappa_c }{1+\tilde{v}\tilde{c}}\partial_{\nu} \tilde{c}-\frac{\kappa_v \tilde{c}}{(1+\tilde{v})^2}\partial_{\nu} \tilde{v}=0&&\text{ in }\R^+\times\partial\Omega,\\
&\tilde{c}(0)=\tilde{c}_0=c_0,\ \tilde{v}(0)=\tilde{v}_0=v_0 &&\text{ in }\Omega.
\end{align*}
\subsection{Results}

\noindent
The simulation results are shown in {\it Figures \ref{cancer_f}} and {\it \ref{tissue_f}}. We observe that in the nondegenerate case the cancer cells are able to diffuse quite fast throughout 
the whole domain. In particular, 
the tumor cells can surpass regions of low or even (locally) vanishing ECM density and can invade their surroundings, thereby degrading the tissue in a more effective way. 
Due to this extended ECM deterioration flattening the fiber density profile and to the higher diffusivity throughout the domain (not restricted by gaps in the tissue), 
the haptotactic component of migration is outweighed by random motility. Therefore, the behavior of cancer cells with nondegenerate diffusion is much more aggressive than 
in the case with degenerate diffusion, where on the one hand the tumor cells are locally trapped between the regions with gaps ($v=0$) and on the other hand no diffusion 
takes place in the regions with $c=0$ until the tumor growth (via proliferation) did not repopulate the regions where cancer cells were lacking. Hence, cancer invasion 
models with nondegenerate diffusion might overestimate the extent of a tumor, especially if the latter is situated in a rather sparse tissue.
\begin{figure}[!tbph]\label{figures1}
  \centering
  \begin{minipage}[b]{0.45\textwidth}
  	\centering
  	\begin{subfigure}{1\textwidth}
  		\includegraphics[width=0.48\linewidth, height=4cm]{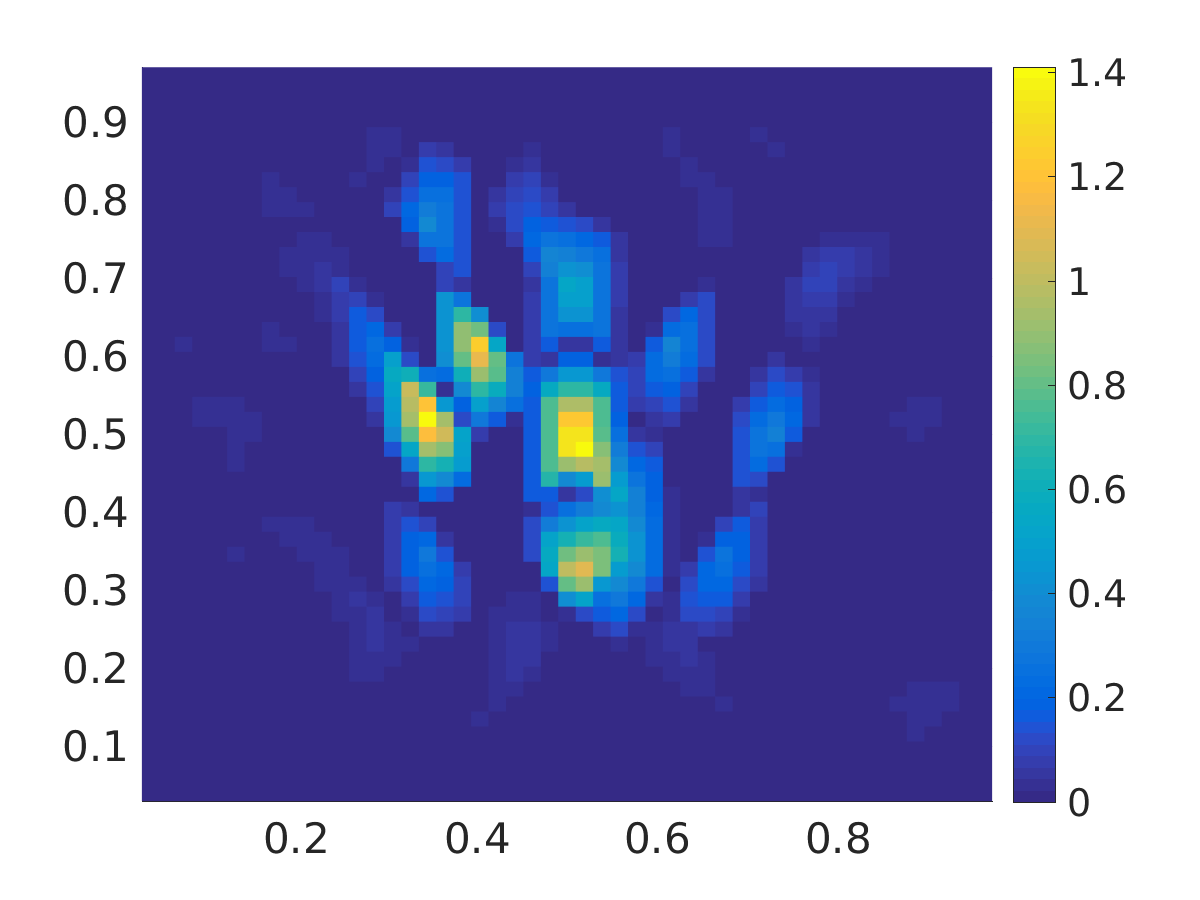} 
  		\includegraphics[width=0.48\linewidth, height=4cm]{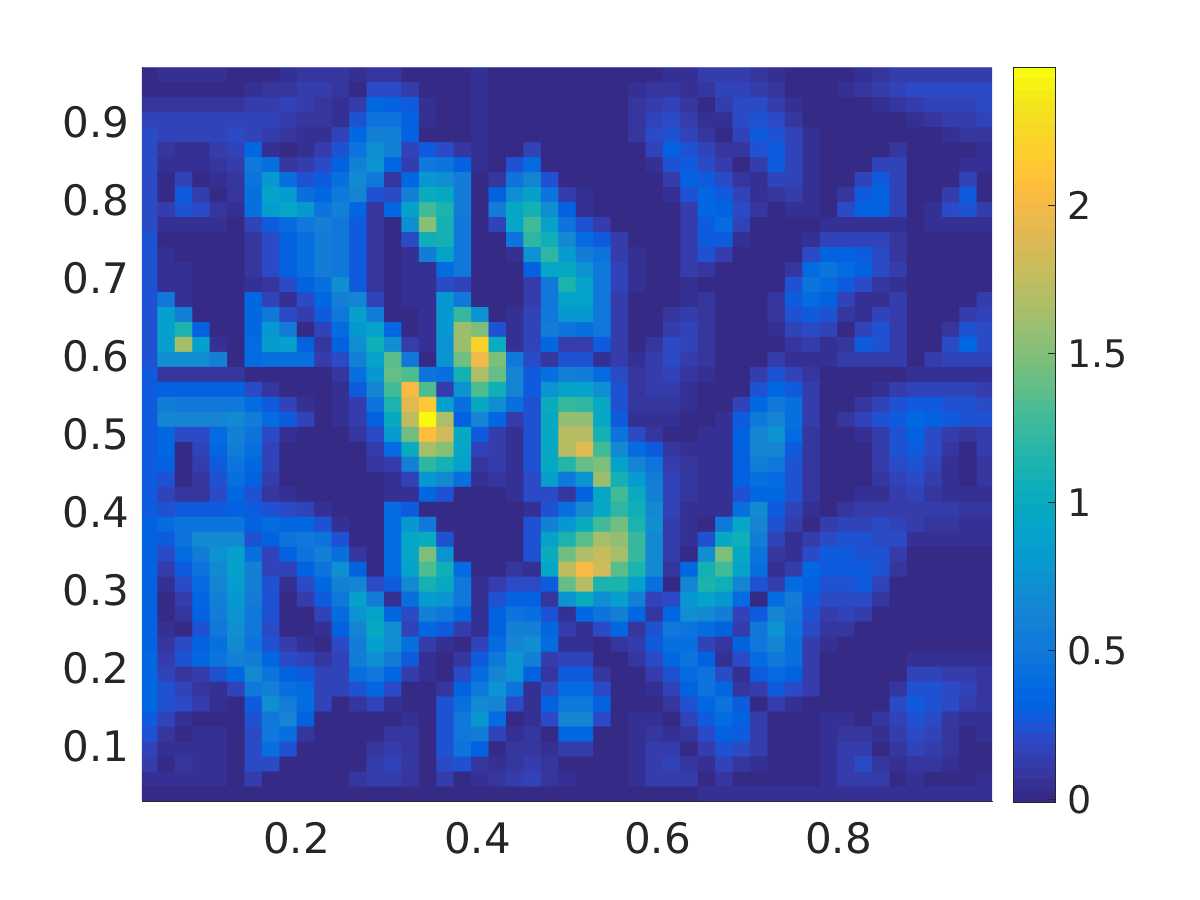} 
  	\end{subfigure}
  	\begin{subfigure}{1\textwidth}
  	  	\includegraphics[width=0.48\linewidth, height=4cm]{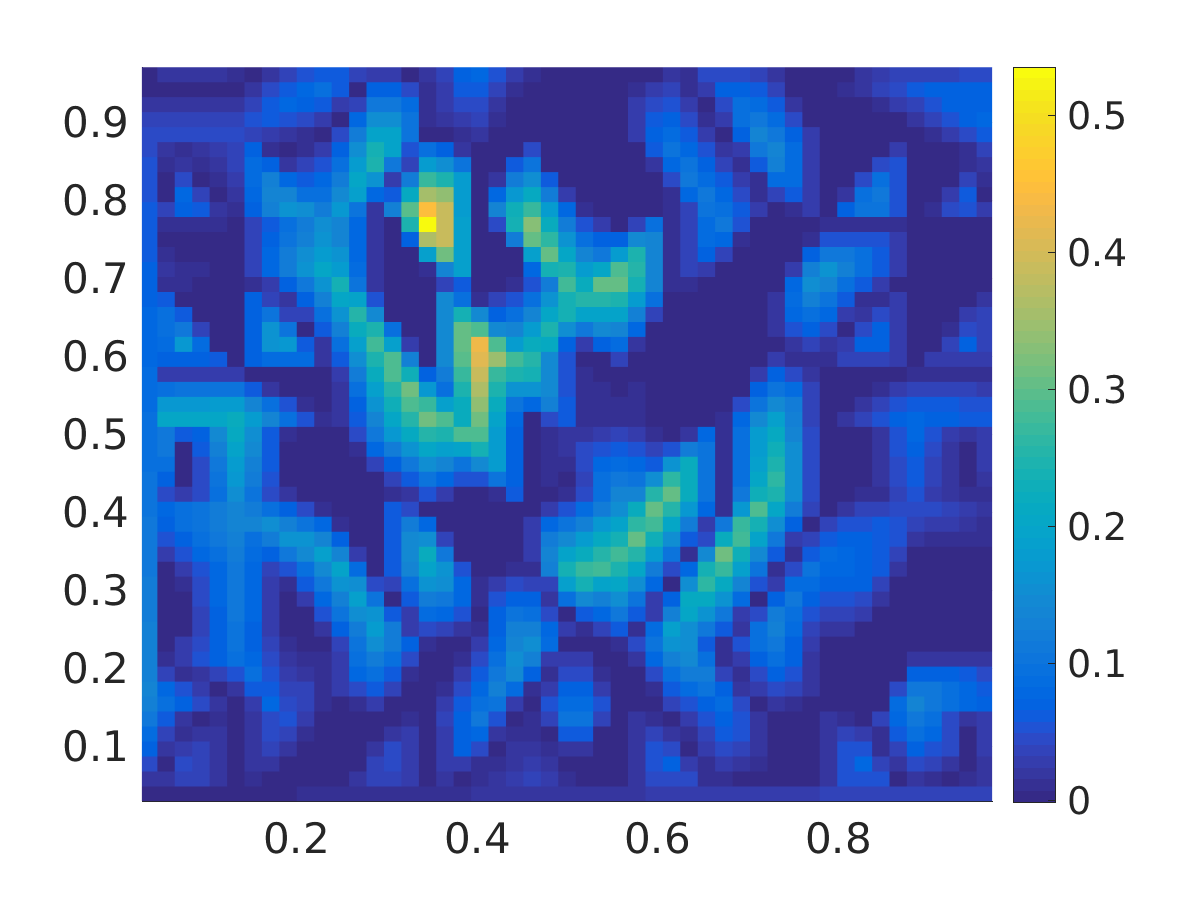} 
  	  	\includegraphics[width=0.48\linewidth, height=4cm]{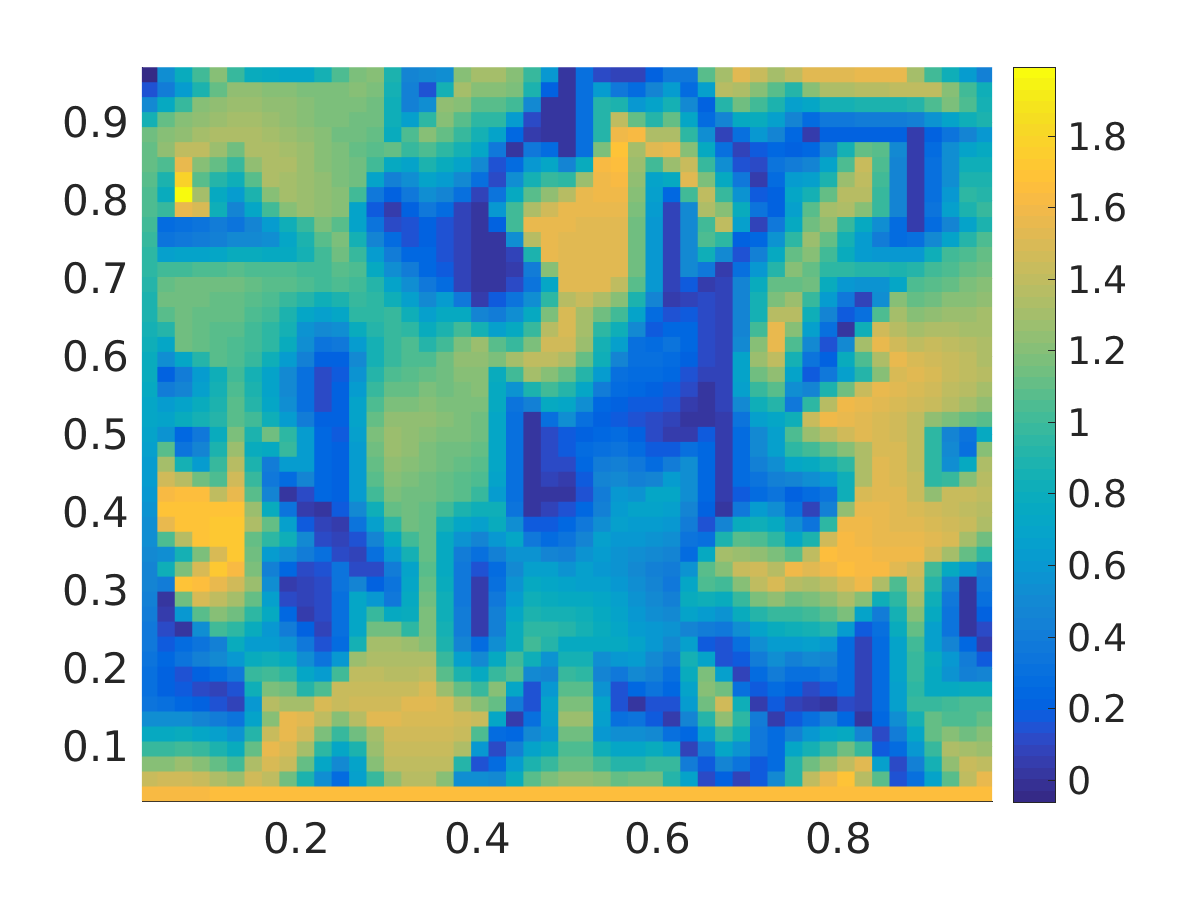} 
  	\end{subfigure}
  	\begin{subfigure}{1\textwidth}
  	  	\includegraphics[width=0.48\linewidth, height=4cm]{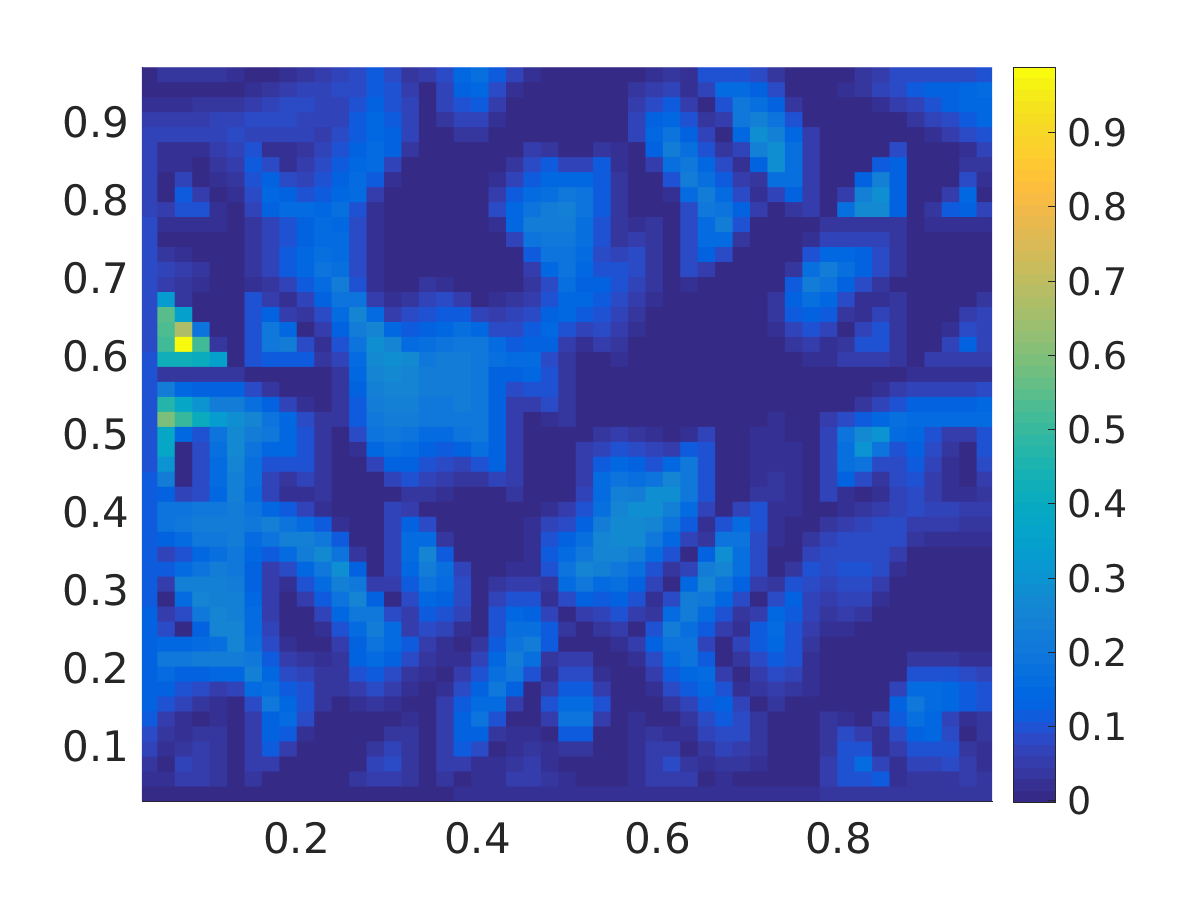} 
  	  	\includegraphics[width=0.48\linewidth, height=4cm]{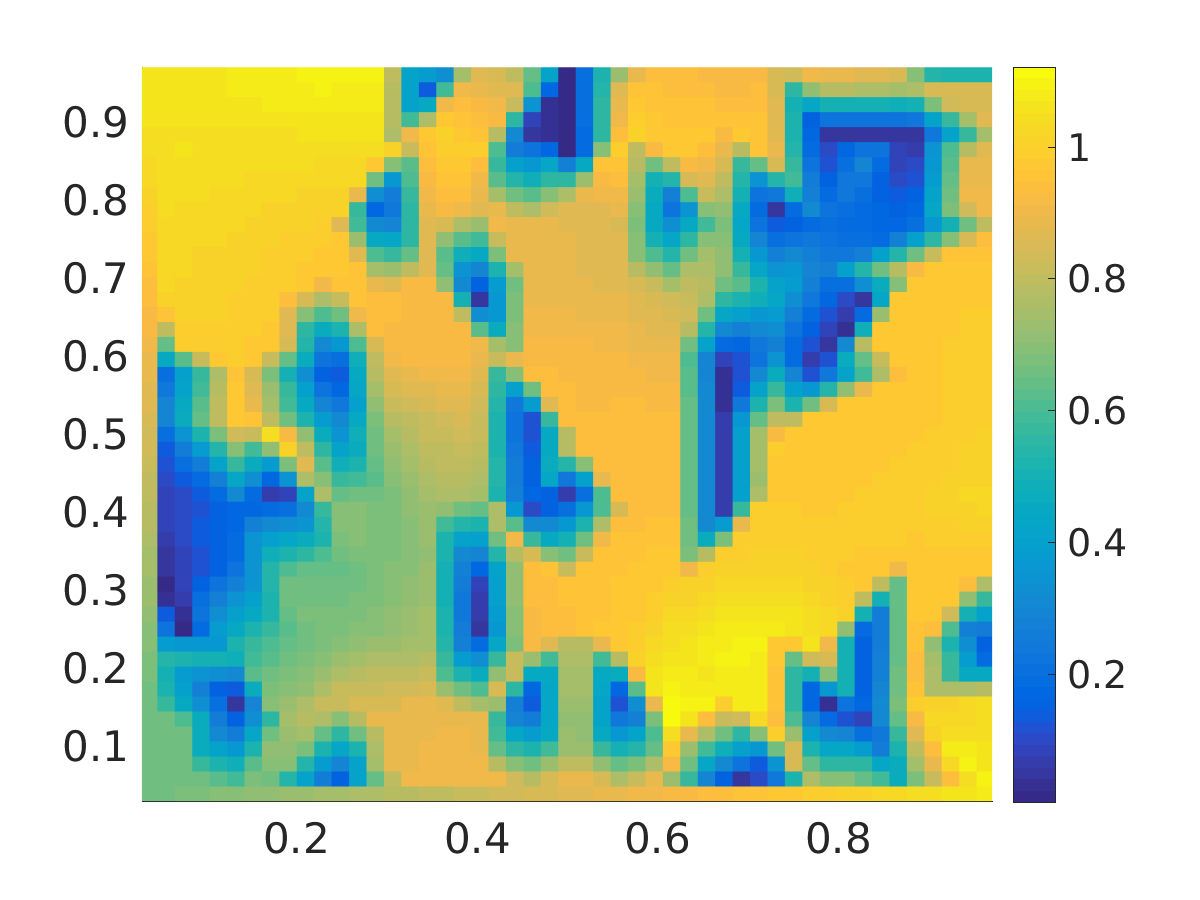} 
  	\end{subfigure}
	\begin{subfigure}{1\textwidth}
   	  	 \includegraphics[width=0.48\linewidth, height=4cm]{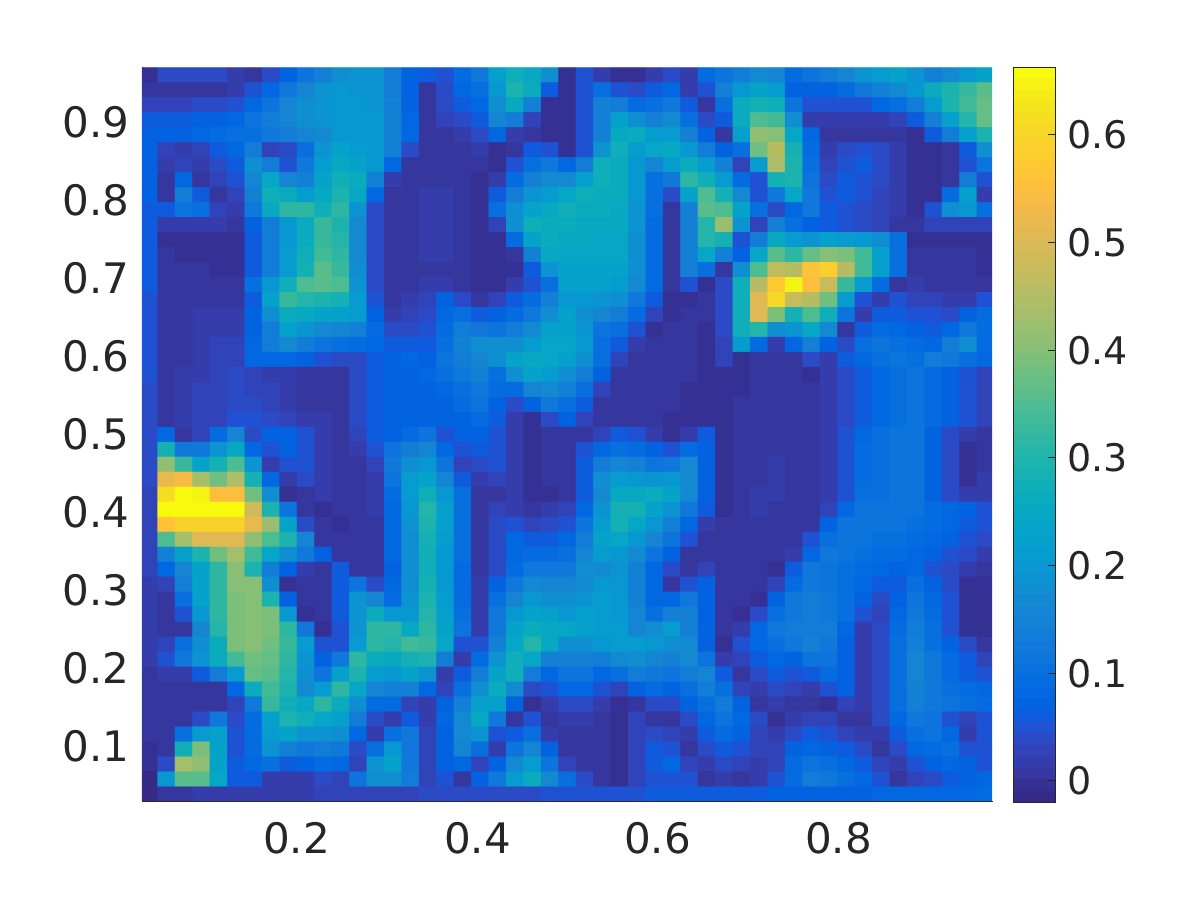} 
   	  	 \includegraphics[width=0.48\linewidth, height=4cm]{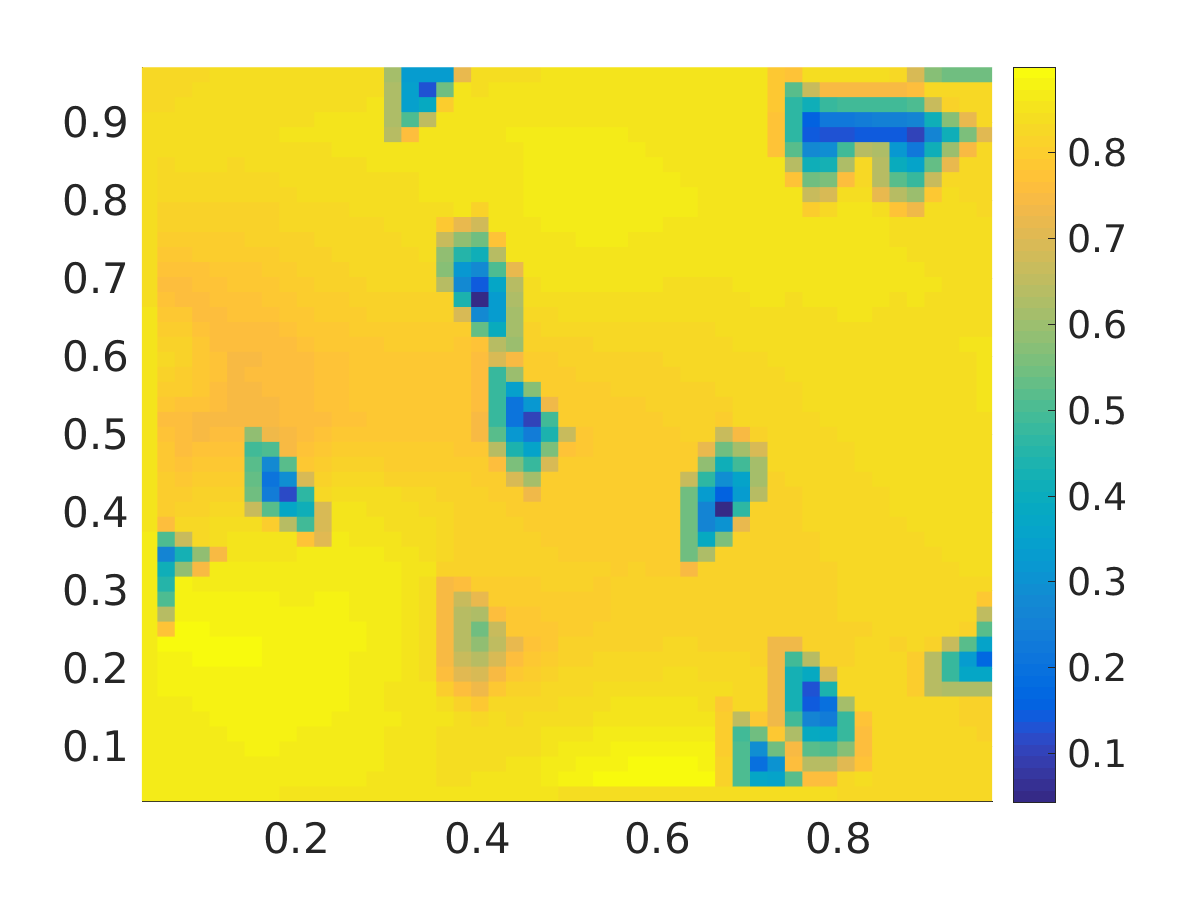} 
   	\end{subfigure}
  	\caption{Simulation results for the tumor cell density with degenerate diffusion (left column) and with non-degenerate diffusion (right column) at 1, 6, 12, and 24 weeks}
	\label{cancer_f}
  \end{minipage}
  \hfill
  \begin{minipage}[b]{0.45\textwidth}
	\centering
    \begin{subfigure}{1\textwidth}
      	\includegraphics[width=0.48\linewidth, height=4cm]{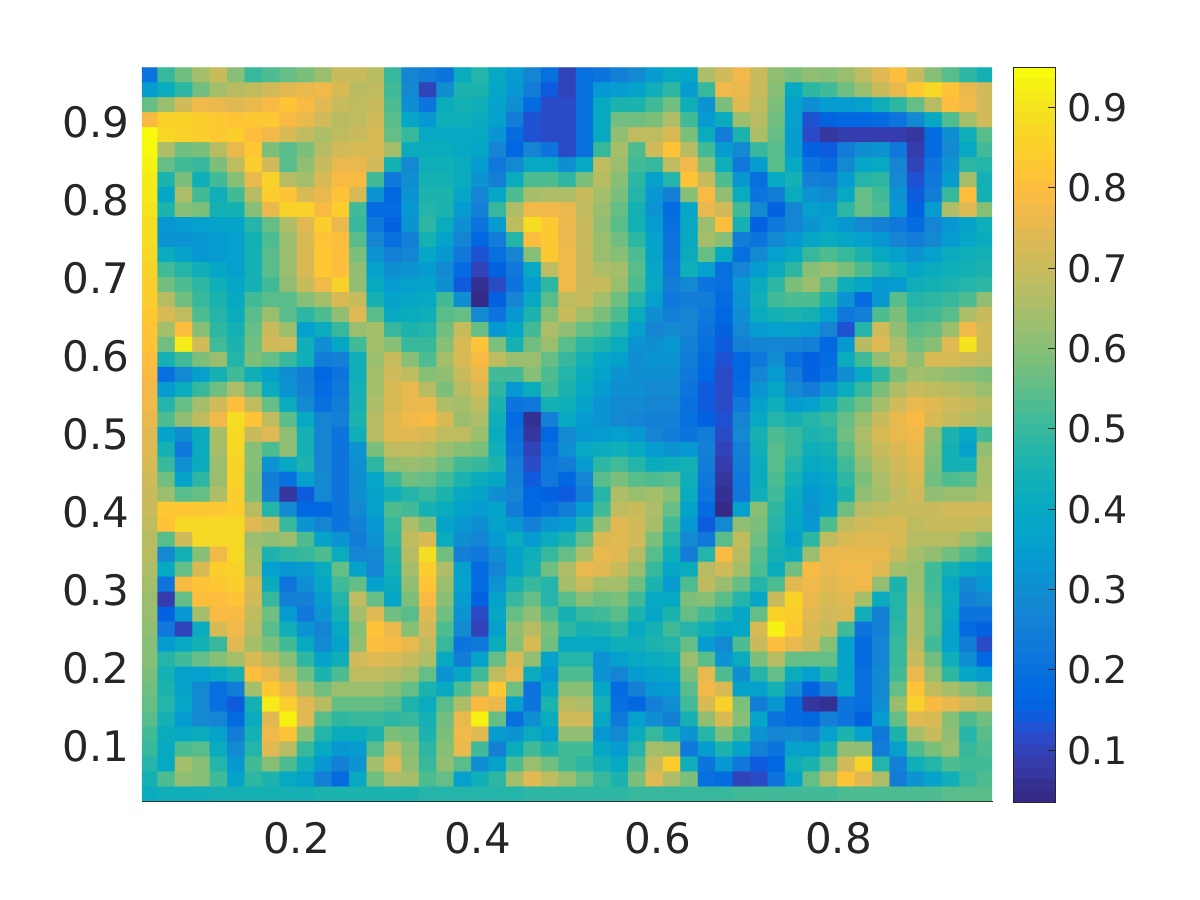} 
      	\includegraphics[width=0.48\linewidth, height=4cm]{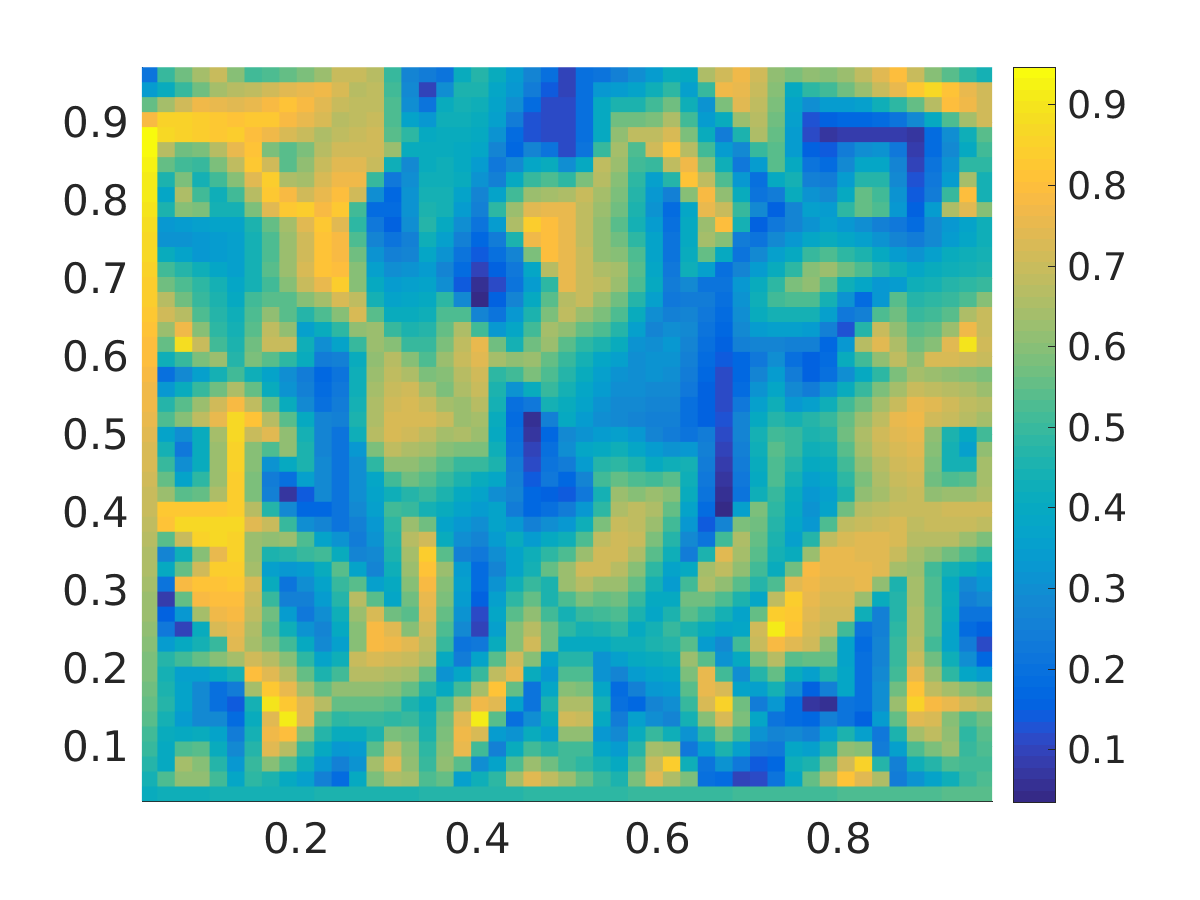} 
    \end{subfigure}
    \begin{subfigure}{1\textwidth}
         \includegraphics[width=0.48\linewidth, height=4cm]{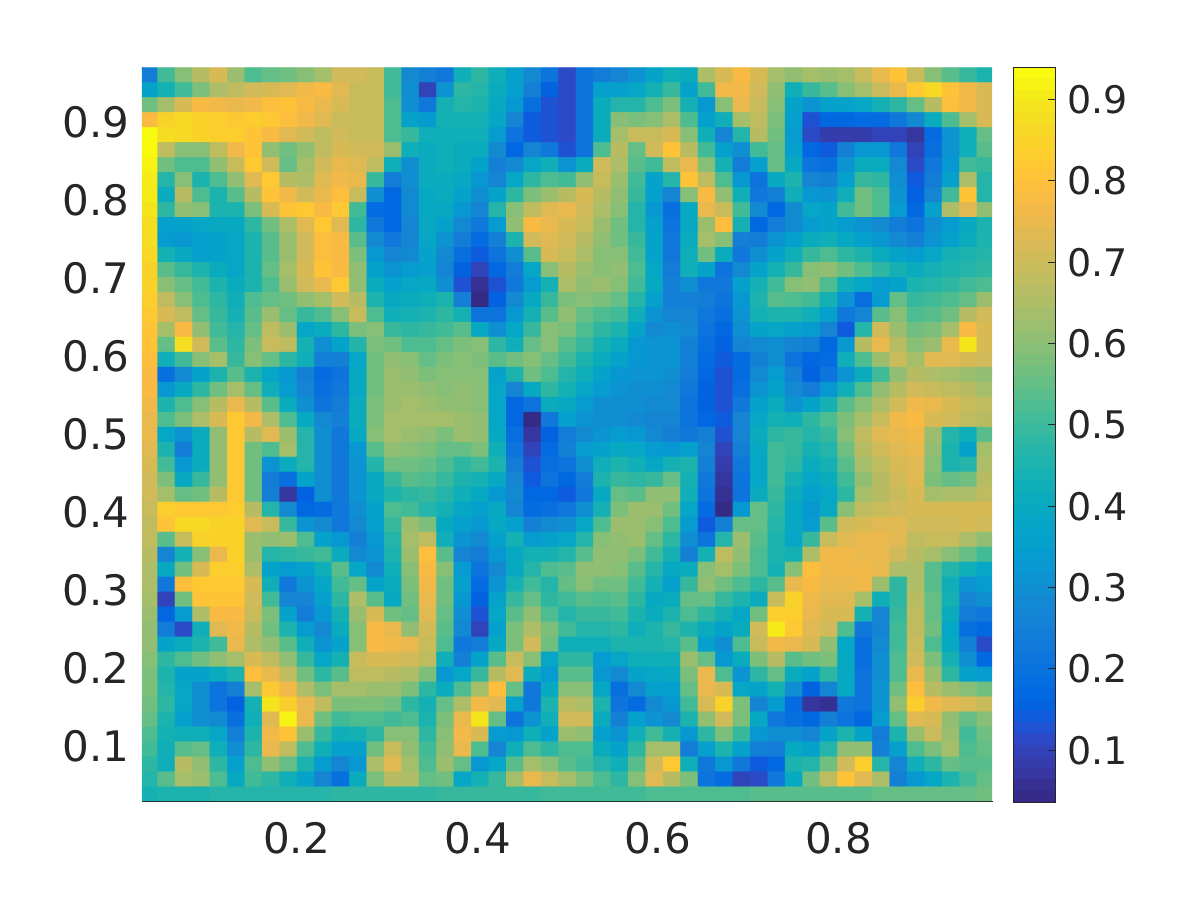} 
         \includegraphics[width=0.48\linewidth, height=4cm]{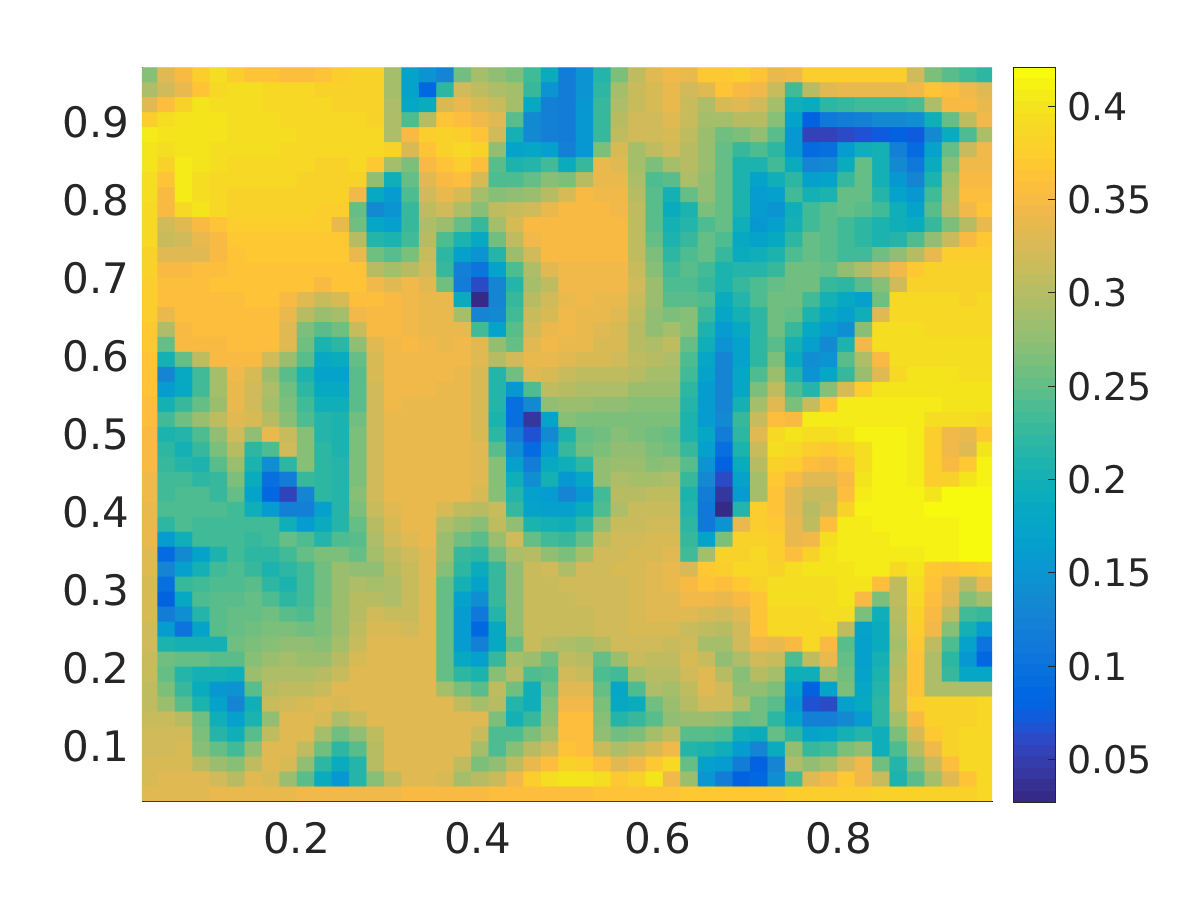} 
    \end{subfigure}
    \begin{subfigure}{1\textwidth}
         \includegraphics[width=0.48\linewidth, height=4cm]{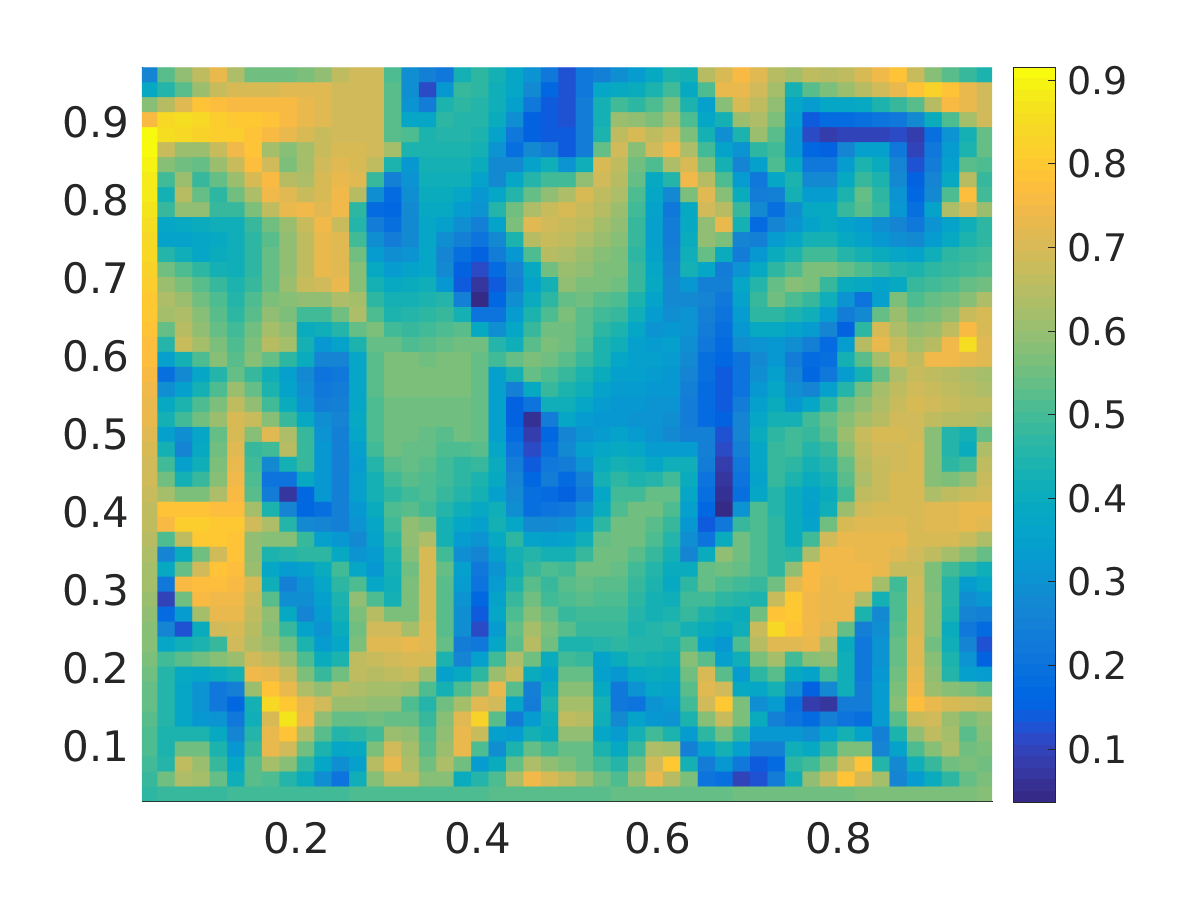} 
         \includegraphics[width=0.48\linewidth, height=4cm]{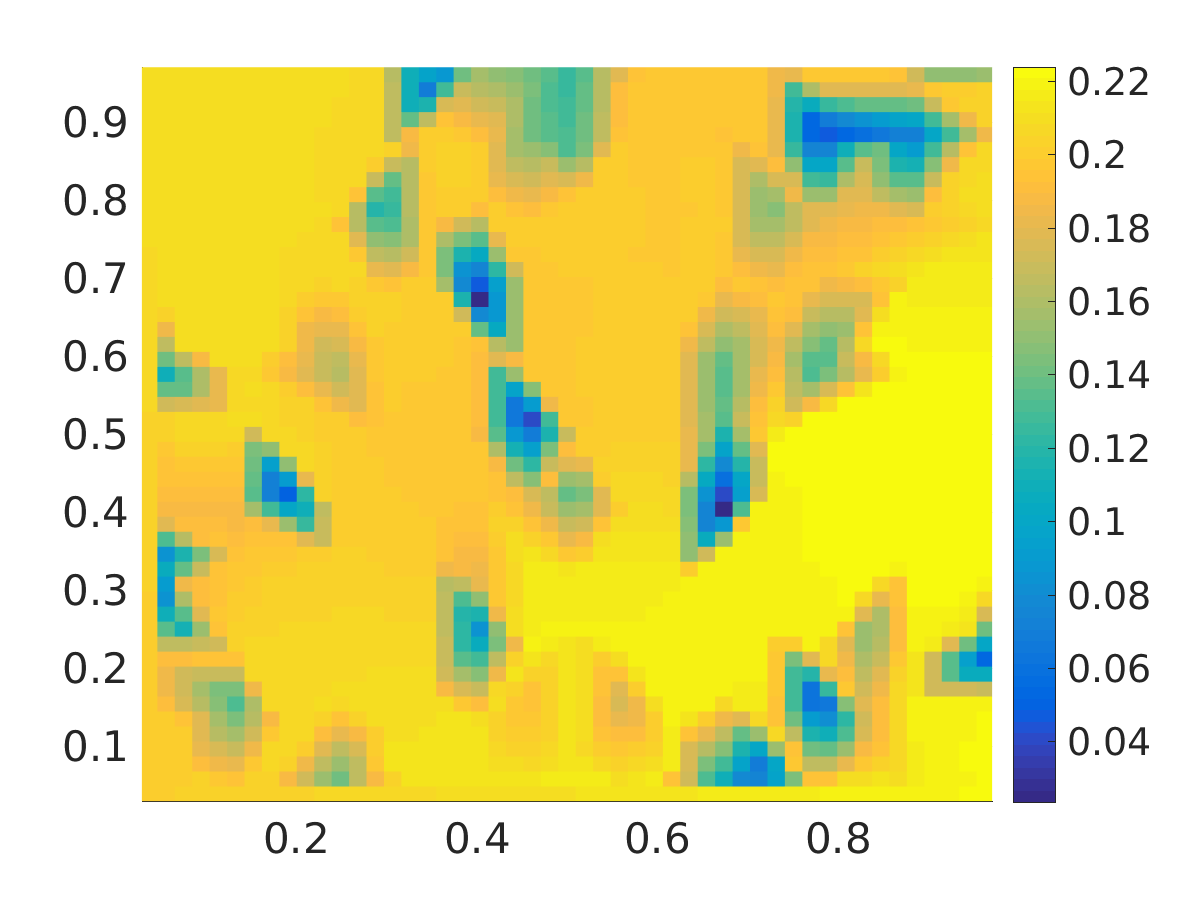} 
     \end{subfigure}
     \begin{subfigure}{1\textwidth}
           \includegraphics[width=0.48\linewidth, height=4cm]{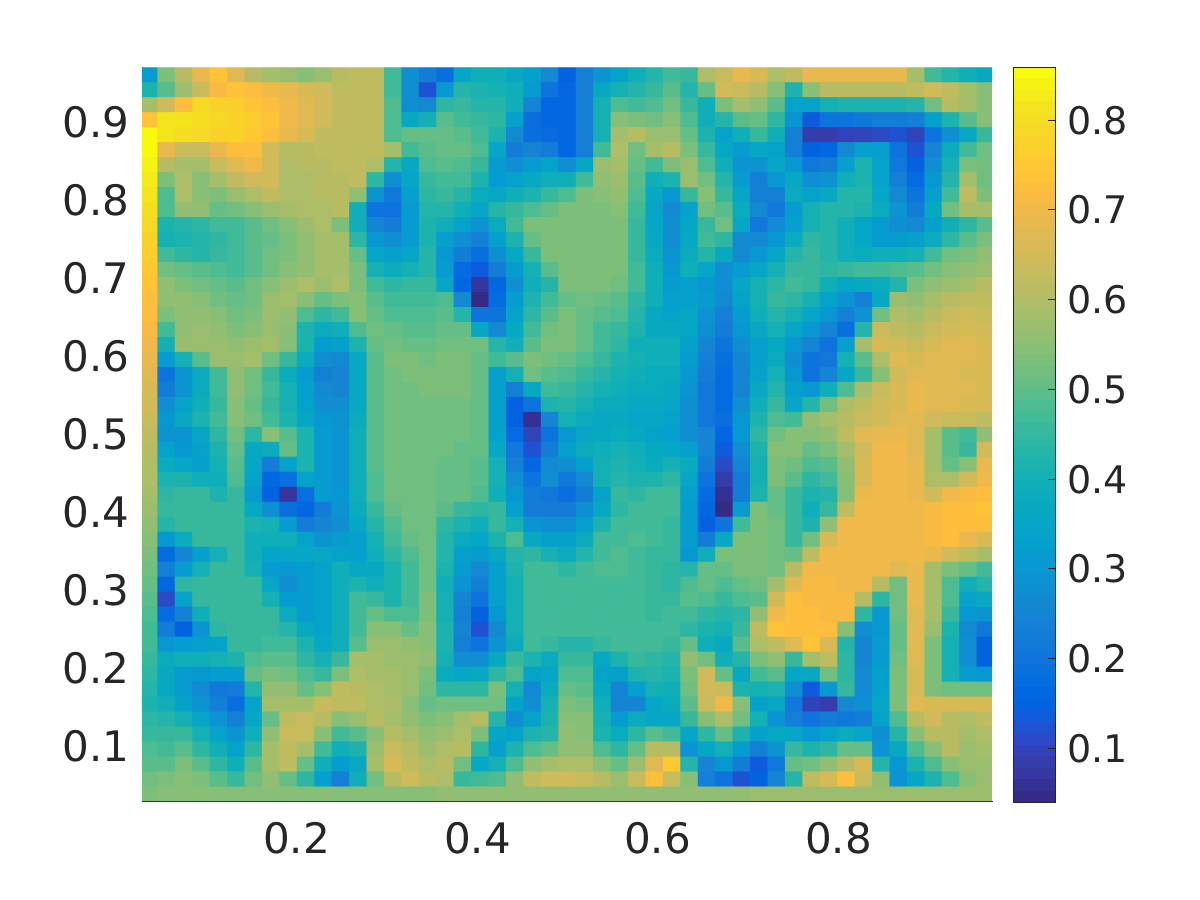} 
           \includegraphics[width=0.48\linewidth, height=4cm]{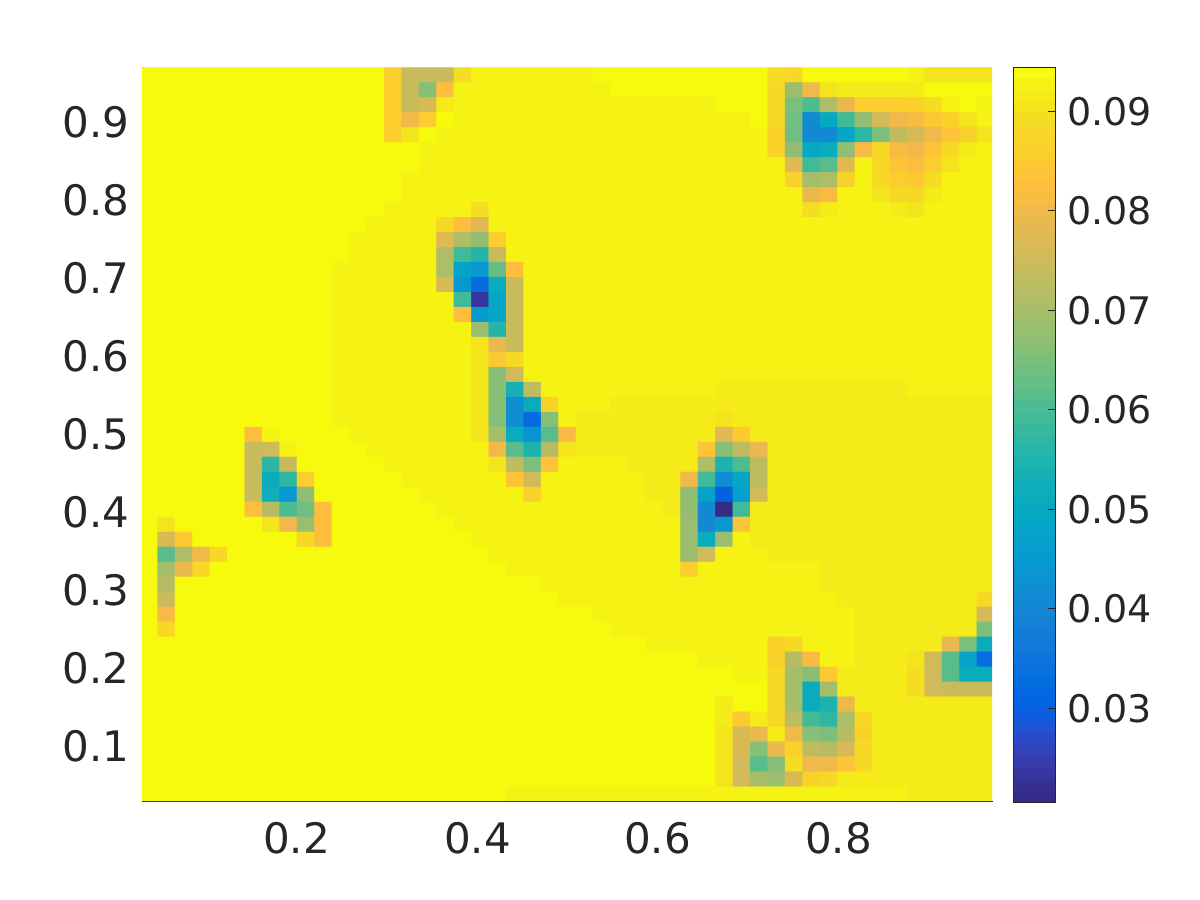} 
     \end{subfigure}
    \caption{Simulation results for the ECM density with degenerate diffusion (left column) and with non-degenerate diffusion (right column) at 1, 6, 12, and 24 weeks}
    \label{tissue_f}
  \end{minipage}
\end{figure}

\section{Discussion}\label{discuss}

We proposed a reaction-diffusion-haptotaxis model for tumor cell migration through tissue allowing for degenerate diffusion. The source of degeneracy is twofold: it can be due to 
the cancer cell density becoming zero and/or it can be triggered by the (locally) vanishing density of tissue fibers. Actually, both diffusion and haptotaxis terms can 
degenerate, but the haptotaxis coefficient can only become zero whenever the tumor cell density is vanishing. Further models with degenerate diffusion have been considered 
and analyzed e.g., in \cite{wang-winkler-wrzosek-12,eberl-efendiev-zhigun-14,horger-etal-14,zheng-etal-16}, however, to our knowledge the present type of degeneracy is new in 
the context of (hapto)taxis. Particularly the presence of a possibly vanishing function $v$ in the numerator of the diffusion coefficient brought about some serious mathematical challenges, due to 
the absence of diffusion in the equation satisfied by the density of ECM fibers.\\

\noindent
We proved the global existence of a solution to the higly 
nonlinear system coupling a PDE for the cancer cell density with an ODE for the ECM density. Precisely, {\it Theorem~\ref{maintheo}} ensures the existence of at least 
one weak solution to \eqref{hapto}. It remains open whether this solution is unique and whether there  exists a solution which is globally or locally uniformly bounded. \\

\noindent
The model in \cite{SSW} involved a nondegenerate diffusion coefficient of the form $D_c(c,v)=\frac{\kappa }{1+cv}$, with $\kappa $ decreasing 
or nearly constant in time. Hence, the diffusivity was assumed there to decrease for strong interactions between cells and tissue. Here we considered a limited increase of the form 
$D_c(c,v)=\frac{\kappa _ccv}{1+cv}$, which can lead to the mentioned twofold degeneracy of diffusion for the tumor cells. In {\it Section \ref{numerics}} we compared via simulations 
the behavior of 
haptotaxis-only models involving the two choices of diffusion coefficients \footnote{and featuring the same haptotactic coefficient $\chi (c,v)=\frac{\kappa _vc}{(1+v)^2}$, 
which differs from the one in \cite{SSW}}. It turned out that the latter choice predicts slower invasion of the tumor and local formation of cell aggregates in the 
proximity of gaps in the tissue; moreover, the degradation of ECM fibers seems to be weaker with the degenerate model than with its nondegenerate counterpart. This suggests the 
case with degenerate diffusion is describing a less aggressive tumor behavior.

\section*{Acknowledgement} A.U. is supported by the German Academic Exchange Service (DAAD).

\phantomsection
\printbibliography
\begin{appendices}
\section{}\label{}
The following lemma is a generalisation of the Lions lemma \cite[Lemma 1.3]{Lions} and the known result on weak-strong convergence for member-by-member products.
\begin{Lemma}[Weak-a.e. convergence]\label{LemA1}
 Let $\Omega$ be a measurable subset of $\R^N$ with finite measure. Let $f,f_n:\Omega\rightarrow\R$, $n\in\N$ be measurable functions and $g,g_n\in L^1(\Omega)$, $n\in\N$. Assume further that $f_n\underset{n\rightarrow\infty}{\rightarrow} f$ a.e. in $\Omega$ and $g_n\underset{n\rightarrow\infty}{\rightharpoonup}g$, $f_ng_n\underset{n\rightarrow\infty}{\rightharpoonup}\xi$ in $L^1(\Omega)$. Then, it holds that $\xi=fg$ a.e. in $\Omega$.
 \end{Lemma}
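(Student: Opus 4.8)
My plan is to approximate $f_n$ by its truncations, send $n\to\infty$ first and $k\to\infty$ afterwards, and thereby squeeze $\xi$ between a strong $L^1$-limit and an a.e.-limit. Write $T_k(s):=\max\{-k,\min\{k,s\}\}$ for the truncation at height $k$. Note that $f$ is finite a.e.\ (being an a.e.\ pointwise limit of real-valued functions) and $g,\xi\in\LOne$ are finite a.e., while $|T_k|\le k$ makes every product $T_k(f_n)g_n$ and $T_k(f)g$ lie in $\LOne$. I would establish three facts: (a) for each fixed $k$, $T_k(f_n)g_n\rightharpoonup T_k(f)g$ in $\LOne$ as $n\to\infty$; (b) the truncation error is uniformly small, $\rho(k):=\sup_n\|(f_n-T_k(f_n))g_n\|_{L^1(\Omega)}\to0$ as $k\to\infty$; and (c) $T_k(f)g\to fg$ a.e.\ as $k\to\infty$.

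For (a) I would test against $\phi\in\LInf$ and split the difference as $\int_\Omega(T_k(f_n)-T_k(f))g_n\phi\,dx+\int_\Omega T_k(f)\phi\,(g_n-g)\,dx$. The second integral vanishes in the limit since $T_k(f)\phi\in\LInf$ and $g_n\rightharpoonup g$, while for the first I invoke the Dunford--Pettis theorem to obtain uniform integrability of $\{g_n\}$ and combine it with Egorov's theorem (using $|\Omega|<\infty$, so that $f_n\to f$, hence $T_k(f_n)\to T_k(f)$, uniformly off a set of arbitrarily small measure) and the $1$-Lipschitz continuity of $T_k$ to conclude $\int_\Omega|T_k(f_n)-T_k(f)|\,|g_n|\,dx\to0$.

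The heart of the matter is (b), and this is where I expect the main difficulty. The point is that a.e.\ convergence of $f_n$, not merely weak convergence, forces the exceptional sets to shrink uniformly: since $f_n\to f$ a.e.\ with $f$ finite, the pointwise supremum $F:=\sup_n|f_n|$ is finite a.e., whence $|\{F>k\}|\to0$ as $k\to\infty$ and $\{|f_n|>k\}\subset\{F>k\}$ uniformly in $n$. On the other hand, the hypothesis $f_ng_n\rightharpoonup\xi$ yields, again by Dunford--Pettis, uniform integrability of $\{f_ng_n\}$. Since $|(f_n-T_k(f_n))g_n|\le|f_ng_n|\,\chi_{\{|f_n|>k\}}$ and the measures $|\{|f_n|>k\}|$ tend to $0$ uniformly in $n$, uniform integrability of $\{f_ng_n\}$ gives exactly $\rho(k)\to0$. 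It is this uniform control of the tails that rescues the argument in the non-reflexive space $\LOne$.

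Finally I would assemble the pieces. From (a) together with $f_ng_n\rightharpoonup\xi$ it follows that $(f_n-T_k(f_n))g_n\rightharpoonup\xi-T_k(f)g$ in $\LOne$ as $n\to\infty$; weak lower semicontinuity of the norm combined with (b) then yields $\|\xi-T_k(f)g\|_{L^1(\Omega)}\le\rho(k)\to0$, i.e.\ $T_k(f)g\to\xi$ strongly in $\LOne$ as $k\to\infty$. Extracting a subsequence in $k$ along which this convergence is also a.e.\ and comparing with (c), I conclude $\xi=fg$ a.e.\ in $\Omega$, which in passing also shows $fg\in\LOne$.
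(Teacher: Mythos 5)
Your proof is correct, but it takes a genuinely different route from the paper's. The paper argues by \emph{localization}: it applies Egorov's theorem on the sublevel sets $\Omega_k=\{|f|\le k\}$ to produce subsets $\Omega_{k,m}$ (exhausting $\Omega$ up to a null set) on which $f$ is bounded and $f_n\to f$ uniformly; there the splitting $\int\varphi(f_ng_n-fg)\,dx=\int\varphi(f_n-f)g_n\,dx+\int\varphi f(g_n-g)\,dx$ identifies the weak $L^1$-limit of $f_ng_n$ as $fg$ on each $\Omega_{k,m}$, using only the norm bound $\sup_n\|g_n\|_{L^1(\Omega)}<\infty$ that comes for free with weak convergence --- no uniform integrability is ever invoked. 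You instead argue \emph{globally by truncation}, and your step (b) is the genuinely different ingredient: a.e.\ convergence makes $F=\sup_n|f_n|$ finite a.e., so the sets $\{|f_n|>k\}\subset\{F>k\}$ shrink uniformly in $n$, and the Dunford--Pettis theorem applied to $\{f_ng_n\}$ kills the truncation error uniformly; a second application of Dunford--Pettis (to $\{g_n\}$) enters your step (a), which otherwise parallels the paper's Egorov computation. Your assembly via weak lower semicontinuity of the $L^1$-norm and a subsequence extraction in $k$ is sound, so the argument goes through. As for what each approach buys: yours yields the stronger intermediate conclusion that $T_k(f)g\to\xi$ strongly in $L^1(\Omega)$ with the explicit rate $\rho(k)$, and it makes transparent where a.e.\ convergence (as opposed to mere weak convergence of $f_n$) is used; the paper's is more frugal (Egorov plus boundedness only) and, more importantly, transfers verbatim to the sum version in Lemma~\ref{LemA2}, where only $\sum_{l=1}^{L}f_n^lg_n^l$ converges weakly and the individual products $f_n^lg_n^l$ need not even lie in $L^1(\Omega)$ --- there your step (b), which requires uniform integrability of each product family separately, does not extend directly, whereas the localization argument handles all summands simultaneously on the common good sets $\Omega_{k,m}$.
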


 \begin{proof}
Since $f$ is a measurable function, the sets $\Omega_k:=\{|f|\leq k\}$, $k\in\N$, are measurable and $|\Omega\backslash \underset{k\in\N}{\bigcup}\Omega_k|=0$.
  Further, due to the Egorov's theorem, there exists for each pair $k,m\in\N$ a measurable subset $\Omega_{k,m}$ of $\Omega_k$ such that $|\Omega_k\backslash\Omega_{k,m}|\leq\frac{1}{m}$ and $f_n\underset{n\rightarrow\infty}{\rightarrow} f$ uniformly in $\Omega_{k,m}$.
  Thus, we have that $||f||_{L^{\infty}(\Omega_{k,m})}\leq k$ and $||f_n-f||_{L^{\infty}(\Omega_{k,m})}\underset{n\rightarrow\infty}{\rightarrow} 0$.
  Since $g_n\underset{n\rightarrow\infty}{\rightharpoonup}g$ in $L^1(\Omega)$, the same holds in  $L^1(\Omega_{k,m})$. As a weakly converging sequence, $\{g_n\}_{n\in\N}$ is uniformly bounded: $\underset{n\in\N}{\sup}||g_n||_{L^1(\Omega_{k,m})}<\infty$.
  Altogether, we obtain for arbitrary $k,m\in\N$ and $\varphi\in L^{\infty}(\Omega_{k,m})$ that
  \begin{align}
   \left|\int_{\Omega_{k,m}}\varphi (f_ng_n-fg)\,dx\right|\leq &\left|\int_{\Omega}\varphi(f_n-f)g_n\,dx\right|+\left|\int_{\Omega_{k,m}}\varphi f(g_n-g)\,dx\right|\nonumber\\
   \leq &||\varphi||_{L^{\infty}(\Omega_{k,m})}\underset{n\in\N}{\sup}||g_n||_{L^1(\Omega_{k,m})}||f_n-f||_{L^{\infty}(\Omega_{k,m})}+\left|\int_{\Omega_{k,m}}\varphi f(g_n-g)\,dx\right|\underset{n\rightarrow\infty}{\rightarrow}0.\nonumber
  \end{align}
It follows that $f_ng_n\underset{n\rightarrow\infty}{\rightharpoonup}fg$ in $L^1(\Omega_{k,m})$. On the other hand, $f_ng_n\underset{n\rightarrow\infty}{\rightharpoonup}\xi$ in $L^1(\Omega)$, and, hence, also in $L^1(\Omega_{k,m})$. Consequently, $fg=\xi$ a.e. in $\Omega_{k,m}$ for all $k,m\in\N$. But $|\Omega_k\backslash\Omega_{m}|\leq\frac{1}{m}\underset{n^{(1)}\rightarrow\infty}{\rightarrow}0$ and $|\Omega\backslash \underset{k\in\N}{\bigcup}\Omega_k|=0$, so that $fg=\xi$  holds a.e. in $\Omega$.
 \end{proof}
A similar result holds for sums of member-by-member products.
\begin{Lemma}[Weak-a.e. convergence for sums]\label{LemA2}
 Let $\Omega$ be a measurable subset of $\R^N$ with finite measure and let $L\in\N$. Let $f^l,f^l_n:\Omega\rightarrow\R$, $n\in\N$, $l\in\{1,...,L\}$, be measurable functions and $g^l,g^l_n\in L^1(\Omega)$, $n\in\N$, $l\in\{1,...,L\}$. Assume further that $f^l_n\underset{n\rightarrow\infty}{\rightarrow} f^l$ a.e. in $\Omega$ and $g^l_n\underset{n\rightarrow\infty}{\rightharpoonup}g^l$, $\sum_{l=1}^{L}f^l_ng^l_n\underset{n\rightarrow\infty}{\rightharpoonup}\xi$ in $L^1(\Omega)$. Then, it holds that $\xi=\sum_{l=1}^{L}f^lg^l$ a.e. in $\Omega$.
 \end{Lemma}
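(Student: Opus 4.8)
The plan is to follow the proof of {\it Lemma~\ref{LemA1}} almost verbatim, exploiting that $L$ is finite so that all $L$ sequences $\{f^l_n\}_n$ can be controlled simultaneously. First I would note that, since each $f^l$ is measurable, the level sets $\Omega_k:=\bigcap_{l=1}^{L}\{|f^l|\leq k\}$, $k\in\N$, are measurable, increasing in $k$, and satisfy $|\Omega\backslash\bigcup_{k\in\N}\Omega_k|=0$; on $\Omega_k$ every $f^l$ is bounded by $k$. Then, for fixed $k,m\in\N$, I would apply Egorov's theorem to the finitely many sequences $f^l_n\to f^l$ (e.g.\ with tolerance $\frac{1}{Lm}$ for each $l$ and intersecting the resulting sets) to obtain a measurable $\Omega_{k,m}\subset\Omega_k$ with $|\Omega_k\backslash\Omega_{k,m}|\leq\frac{1}{m}$ on which $f^l_n\to f^l$ uniformly for \emph{all} $l$ simultaneously; thus $\|f^l\|_{L^\infty(\Omega_{k,m})}\leq k$ and $\|f^l_n-f^l\|_{L^\infty(\Omega_{k,m})}\to0$ for each $l$.

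The core step is to recover weak convergence of the \emph{individual} products on $\Omega_{k,m}$. Repeating the estimate from {\it Lemma~\ref{LemA1}} term by term, I would split $\varphi(f^l_ng^l_n-f^lg^l)=\varphi(f^l_n-f^l)g^l_n+\varphi f^l(g^l_n-g^l)$ for $\varphi\in L^\infty(\Omega_{k,m})$, bound the first summand using the uniform $L^1(\Omega_{k,m})$-bound on the weakly convergent sequence $\{g^l_n\}$ together with $\|f^l_n-f^l\|_{L^\infty(\Omega_{k,m})}\to0$, and treat the second summand via $\varphi f^l\in L^\infty(\Omega_{k,m})$ and $g^l_n\rightharpoonup g^l$. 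This yields $f^l_ng^l_n\rightharpoonup f^lg^l$ in $L^1(\Omega_{k,m})$ for each $l$. Since the sum over $l$ is finite, weak convergence is preserved under summation, so $\sum_{l=1}^{L}f^l_ng^l_n\rightharpoonup\sum_{l=1}^{L}f^lg^l$ in $L^1(\Omega_{k,m})$.

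To finish, I would invoke the hypothesis $\sum_{l=1}^{L}f^l_ng^l_n\rightharpoonup\xi$ in $L^1(\Omega)$, hence also in $L^1(\Omega_{k,m})$, and conclude by uniqueness of weak limits that $\xi=\sum_{l=1}^{L}f^lg^l$ a.e.\ in $\Omega_{k,m}$. Letting $m\to\infty$ and then $k\to\infty$, and using $|\Omega_k\backslash\Omega_{k,m}|\leq\frac{1}{m}$ together with $|\Omega\backslash\bigcup_{k}\Omega_k|=0$, the identity extends to a.e.\ in $\Omega$.

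The only feature genuinely new relative to {\it Lemma~\ref{LemA1}} is that we are given weak convergence merely of the sum $\sum_l f^l_ng^l_n$, not of the separate products. I do not expect this to be a real obstacle: on the Egorov sets $\Omega_{k,m}$ the separate weak convergences are \emph{derived} from the boundedness of $f^l$ and the uniform convergence of $f^l_n$, so the decomposition into individual products is available there at no extra cost, and only the finiteness of $L$ (to preserve weak convergence under summation) is used.
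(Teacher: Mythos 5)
Your proof is correct and takes essentially the same route as the paper, which itself only remarks that \emph{Lemma~\ref{LemA2}} follows from the proof of \emph{Lemma~\ref{LemA1}} once the sets $\Omega_k$ and $\Omega_{k,m}$ are chosen independently of $l$ --- precisely your construction via $\Omega_k=\bigcap_{l=1}^{L}\{|f^l|\leq k\}$ and a simultaneous application of Egorov's theorem. Your closing observation is also the right one: the individual weak convergences $f^l_ng^l_n\rightharpoonup f^lg^l$ are \emph{derived} on each $\Omega_{k,m}$ from the uniform convergence of $f^l_n$ and the uniform $L^1$-bounds on $g^l_n$, so the hypothesis on the sum alone suffices, with finiteness of $L$ used only to sum the weak limits.
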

 \begin{Remark}
  Observe that, in {\it Lemma~\ref{LemA2}}, we require not the  sequences $\left\{f^l_ng^l_n\right\}_{n\in\N}$ themselves  to be convergent for $l\in\{1,...,L\}$, but only their sum $\left\{\sum_{l=1}^{L}f^l_ng^l_n\right\}_{n\in\N}$.  Thus, the result is applicable in the cases where the convergence of individual sequences is either false or unknown.
 \end{Remark}
 \noindent
The proof of {\it Lemma~\ref{LemA2}} is very similar to the proof of {\it Lemma~\ref{LemA1}}. One only has to choose the sets $\Omega_k$ and  $\Omega_{k,m}$  independent of $l\in\{1,...,L\}$. We leave the remaining details to the reader.
\end{appendices}

\end{document}